\providecommand{\otherindexspace}[1]{}
\newtheorem{theorem}{Theorem}[section]
\newtheorem{lemma}[theorem]{Lemma}
\newtheorem{proposition}[theorem]{Proposition}
\newtheorem{remark}[theorem]{Remark}
\newtheorem{definition}[theorem]{Definition}
\newtheorem{corollary}[theorem]{Corollary}
\newtheorem{assumption}[theorem]{Assumption}
\numberwithin{equation}{section}
\def\esssup{\mathop{\mathrm{esssup}}\limits}
\def\essinf{\mathop{\mathrm{essinf}}\limits}
\def\s{\sigma}
\def\cal#1{\mathcal{#1}}
\def \H{\mathbb {H}}
\def \N{\mathbb {N}}
\def \R{\mathbb {R}}
\def \E{\mathbb {E}}
\def \F{\mathbb {F}}
\def \P{\mathbb {P}}
\def \mS{\mathcal{S}}
\def \mA{\mathcal{A}}
\def \mF{\mathcal{F}}
\def \mT{\mathcal{T}}
\def \mR{\mathcal{R}}
\def \mD{\mathcal{D}}
\def \mB{\mathcal{B}}
\def \mP{\mathcal{P}}
\newcommand{\ind}{\mathbf{1}}
\newcommand{\ov}{\overline}
\def\titre{\@title}
\begin{document}

\vspace{5cm}
\title{Numerical approximation of doubly reflected BSDEs with jumps and RCLL obstacles}
\author{Roxana DUMITRESCU\thanks{CEREMADE,
Universit\'e Paris 9 Dauphine, CREST  and  INRIA Paris-Rocquencourt, email: {\tt roxana@ceremade.dauphine.fr}. The research leading to these results has received funding from the R\'egion Ile-de-France. }
\and
C\'eline LABART\thanks{LAMA, 
Universit\'e de Savoie, 73376 Le Bourget du Lac, France and  INRIA Paris-Rocquencourt,
email: {\tt celine.labart@univ-savoie.fr}}}

\maketitle

\begin{abstract} We study a discrete time approximation scheme for the
  solution of a doubly reflected Backward Stochastic Differential Equation
  (DBBSDE in short) with jumps, driven by a Brownian motion and an independent
  compensated Poisson process. Moreover, we suppose that the obstacles are
  right continuous and left limited (RCLL) processes with predictable and
  totally inaccessible jumps and satisfy Mokobodzki's condition. Our main
  contribution consists in the construction of an implementable numerical
  sheme, based on two random binomial trees and the penalization method, which
  is shown to converge to the solution of the DBBSDE. Finally, we illustrate
  the theoretical results with some numerical examples in the case of general
  jumps.

\end{abstract}

\vspace{10mm}

\noindent{\bf Key words~:}  Double barrier reflected BSDEs, Backward
stochastic differential equations with jumps, Skorohod topology, numerical
sheme, penalization method.

\vspace{10mm}

\noindent{\bf MSC 2010 classifications~:} 60H10,60H35,60J75,34K28.

\newpage

\section{Introduction}

\quad In this paper, we study in the non-markovian setting a discrete time approximation scheme for the solution of a doubly reflected Backward Stochastic Differential Equation (DBBSDE in short) when the noise is given by a Brownian motion and a Poisson random process mutually independent. Moreover, the barriers are  supposed to be right-continuous  and left-limited  (RCLL in short) processes, whose jumps are arbitrary, they can be either predictable or inaccessible. The DBBSDE we solve numerically has the following form:
\begin{align}\label{eqintro}
  \left \lbrace \begin{tabular}{l}
    \mbox{(i) $Y_t=\xi_T+\int_t^T g(s,Y_s,Z_s,U_s)ds+(A_T-A_t)-(K_T-K_t)-\int_t^T Z_s
      dW_s-\int_t^T U_s d\tilde{N}_s$},\\
    \mbox{(ii) $\forall t \in [0,T]$, $\xi_t \le Y_t \le \zeta_t$ a.s.,}\\
    \mbox{(iii) $\int_0^T (Y_{t^-} -\xi_{t^-}) dA_t^c=0$ a.s.  and $\int_0^T (\zeta_{t^-} - Y_{t^-})
      dK_t^c=0$ a.s.} \\
   \mbox{(iv)} $\forall \tau \mbox{ predictable stopping time },\;\; \Delta
   A^d_{\tau}=\Delta A^d_{\tau}\ind_{Y_{\tau^-}=\xi_{\tau^-}} \mbox{ and}\;\; \Delta
    K^d_{\tau}=\Delta K^d_{\tau}\ind_{Y_{\tau^-}=\zeta_{\tau^-}}$.
  \end{tabular} \right.
\end{align} \quad Here, $A^c$ (resp. $K^c$) denotes the continuous part of $A$
(resp. $K$)  and $A^d$ (resp. $K^d$)  its discontinuous part, $\{W_t: 0 \leq t \leq T \}$ is a one dimensional
standard Brownian motion and $\{\Tilde{N}_t:=N_t-\lambda t, 0 \le t \le T\}$ is a compensated
Poisson process. Both processes are independent and they
are defined on the probability space $(\Omega,
\mathcal{F}_T,\mathbb{F}=\{\mathcal{F}_t\}_{0\le t \le T},P)$. The processes $A$ and $K$ have the role to keep the solution between the two obstacles $\xi$ and $\zeta$. Since we consider the general setting when the jumps of the obstacles can be either predictable or totally inaccessible, $A$ and $K$ are also discontinuous.\\

\quad  In the case of a Brownian filtration, non-linear backward stochastic
differential equations (BSDEs in short) were introduced by Pardoux and Peng
\cite{PP90}. One barrier reflected BSDEs have been firstly studied by El
Karoui et al in \cite{EKPPQ97}. In their setting, one of the components of the
solution is forced to stay above a given barrier which is a continuous adapted
stochastic process. The main motivation is the pricing of American options
especially in constrained markets. The generalization to the case of two
reflecting barriers has been carried out by Cvitanic and Karatzas in
\cite{CK96}. It is also well known that doubly reflected BSDEs are related to
Dynkin games and in finance to the pricing of Israeli options (or Game
options, see \cite{Kif13}).  The case of standard BSDEs with jump processes driven by a
compensated Poisson random measure was first considered by Tang and Li in
\cite{TL94}. The extension to the case of reflected BSDEs and one reflecting
barrier with only inaccessible jumps has been established by Hamad\`ene and
Ouknine \cite{HO03}. Later on, Essaky in \cite{Ess08} and Hamad\`ene and Ouknine
in \cite{HO13} have extended these results to a RCLL obstacle with predictable
and inaccessible jumps.
Results concerning existence and uniqueness of the solution for doubly
reflected BSDEs with jumps can be found  in \cite{CM08},\cite{DQS14},
\cite{HH06}, \cite{HW09} and \cite{EHO05}.\\

\quad  Numerical shemes for DBBSDEs driven by the Brownian motion and based on a
random tree method have been
proposed by Xu in \cite{Xu11} (see also \cite{MPX02} and \cite{PX11}) and, in the Markovian framework, by Chassagneux in
\cite{C09}. In the case of a filtration driven also by a Poisson process, some
results have been provided only in the non-reflected case. In \cite{BE08}, the
authors propose a scheme for Forward-Backward SDEs based on the dynamic
programming equation and in \cite{LMT07} the authors propose a fully
implementable scheme based
on a random binomial tree. This work extends the paper \cite{BDM01}, where
the authors prove a Donsker type theorem for BSDEs in the Brownian case.\\

\quad Our aim is to propose an implementable numerical method to approximate
the solution of DBBSDEs with jumps and RCLL obstacles \eqref{eqintro}. As for
standard BSDEs, the computation of conditional expectations is an important
issue. Since we consider reflected BSDEs, we also have to model the
constraints. To do this, we consider the following approximations
\begin{itemize}
\item we approximate the Brownian motion and the Poisson process by two independent random walks,
\item we introduce a sequence of penalized BSDEs to approximate the reflected BSDE.
\end{itemize}
 
These approximations enable us to provide a fully implementable scheme, called
{\it explicit penalized discrete scheme} in the following. We prove in Theorem \ref{main_thm} that the scheme weakly
converges to the solution of \eqref{eqintro}. 
Moreover, in order to prove the convergence of our sheme, we prove, in the
case of jump processes driven by a general Poisson random measure, that
the solutions of the penalized equations converge to the solution of the
doubly reflected BSDE in the case of a driver depending on the solution, which
was not the case in the previous literature (see \cite{EHO05}, \cite{HH06},
\cite{HW09}). This gives another proof for the existence of a solution of
  DBBSDEs with jumps and RCLL barriers. Our method is based on a combination of penalization, Snell
envelope theory, stochastic games, comparison theorem for BSDEs with jumps (see \cite{QS13}, \cite{QS14})  and a generalized monotonic theorem under the Mokobodzki's
condition. It extends \cite{LX07} to the case when the solution of the DBBSDE
also admits totally inaccessible jumps. Finally, we illustrate our theoretical
results with some numerical simulations in the case of general
jumps. We point out that the practical use of our scheme is restricted to
  low dimensional cases. Indeed, since we use a random walk to approximate the
  Brownian motion and the Poisson process, the complexity of the algorithm grows very fast in
  the number of time steps $n$ (more precisely, in $n^d$, $d$ being the
  dimension) and, as we will see in the numerical part, the penalization
  method requires many time steps to be stable.\\

\quad The paper is organized as follows: in Section 2 we introduce notation
and assumptions. In Section 3, we precise the discrete framework and give the
numerical scheme. In Section 4 we
provide the convergence by splitting the error : the error due to the approximation by penalization and the error due to the time discretization. Finally,
Section 5 presents some numerical examples, where the barriers contain
predictable and totally inaccessible jumps. In Appendix, we extend
the generalized  monotonic theorem and prove some technical results for discrete
BSDEs to the case of jumps. For the self-containment of the paper, we also
recall some recent results on BSDEs with jumps and reflected BSDEs.

\section{Notations and assumptions}

Although we propose a numerical scheme for
reflected BSDEs driven by a Brownian motion and a Poisson process, one part of
the proof of the convergence of our scheme is done in the general setting of
jumps driven by a Poisson random measure. Then, we first introduce
the general framework, in which we prove the convergence of a sequence of
   penalized
BSDEs to the solution of \eqref{eqintro}.

\subsection{General framework}

\subsubsection{Notation}

As said in Introduction, let $(\Omega, \F, P)$ be a probability space, and
${\mP}$ be the predictable $\sigma$-algebra on $[0,T] \times \Omega$. $W$ is a
one-dimensional Brownian motion and $N(dt,de)$ is a Poisson random measure,
independent of $W$, with compensator $\nu(de)dt$ such that $\nu$ is a
$\sigma$-finite measure on $\R^*$, equipped with its Borel field
${\mB}(\R^*)$.  Let $\tilde N(dt,du)$ be its compensated process.  Let
$\F = \{\mathcal{F}_t , 0\leq t \leq T \}$
be  the natural filtration associated with $W$ and $N$. \\

For each $T>0$, we use the following notations:
\begin{itemize}
\item
$L^2(\mathcal{F}_T)$  is the set of random variables $\xi$ which are  $\mathcal{F}_T
$-measurable and square integrable.

\item    $\H^{2}$ is the set of
real-valued predictable processes $\phi$ such that $\| \phi\|^2_{\H^{2}} := \E \left[\int_0 ^T \phi_t ^2 dt \right] < \infty.$

\item $L^2_\nu$ is the set of Borelian functions $\ell: \R^* \rightarrow \R$ such that  $\int_{\R^*}  |\ell(u) |^2 \nu(du) < + \infty.$

The set  $L^2_\nu$ is a
Hilbert space equipped with the scalar product
$\langle \delta    , \, \ell \rangle_\nu := \int_{\R^*} \delta(u) \ell(u) \nu(du)$ for all $\delta  , \, \ell \in L^2_\nu \times L^2_\nu,$
and the norm $\|\ell\|_\nu^2 :=\int_{\R^*}  |\ell(u) |^2 \nu(du).$

\item $\mathcal {B}(\R^2)$ (resp $\mathcal {B}(L^2_\nu ) $)  is the Borelian
$\sigma$-algebra on $\R^2$  (resp.  on $L^2_\nu $).

\item $\H_{\nu}^{2}$ is  the set of processes $l$ which are {\em predictable}, that is, measurable $$l : ([0,T]  \times \Omega \times \R^*,\; \mP \otimes {\cal B}(\R^*))  \rightarrow (\R\;,  \mB(\R)); \quad
(\omega,t,u) \mapsto l_t(\omega, u)
$$ such that $\| l \|^2_{\H_{\nu}^{2}} :=\E\left[\int_0 ^T \|l_t\|_{\nu}^2 \,dt \right]< \infty.$

\item   ${\cal S}^{2}$ is the set of real-valued RCLL adapted
processes $\phi$ such that $\| \phi\|^2_{\mathcal {S}^2} := \E(\sup_{0\leq t \leq T} |\phi_t |^2) <  \infty.$

\item $\mathcal {A}^2$ is the set of real-valued non decreasing RCLL predictable
processes $A$ with $A_0 = 0$ and $\E(A^2_T) < \infty$. 

\item  $\mT_{0}$ is the set of
stopping times $\tau$ such that $\tau \in [0,T]$ a.s

\item For $S$ in $\mT_{0}$,    $\mT_{S}$  is the set of
stopping times
$\tau$ such that $S \leq \tau \leq T$ a.s.

\end {itemize}

\subsubsection{Definitions and assumptions.}

We start this section by recalling the definition of a driver and a
Lipschitz driver. We also introduce DBBSDEs and our working assumptions.

\begin{definition}[Driver, Lipschitz driver]\label{defd}
A function $g$ is said to be a {\em driver} if
\begin{itemize}
\item
$g:  \Omega \times [0,T] \times \R^2 \times L^2_\nu \rightarrow \R $\\
$(\omega, t,y, z, \kappa(\cdot)) \mapsto  g(\omega, t,y,z,k(\cdot)) $
 is $ {\cal P} \otimes {\cal B}(\R^2)  \otimes {\cal B}(L^2_\nu)
- $ measurable,
\item $\|g(.,0,0,0)\|_{\infty}< \infty$.
\end{itemize}
A driver $g$ is called a {\em Lipschitz driver} if moreover there exists a
constant $ C_g \geq 0$ and a bounded, non-decreasing continuous function
$\Lambda$ with $\Lambda(0)=0$ such that $d \P \otimes dt$-a.s.\,,
for each $(s_1,y_1, z_1, k_1)$, $(s_2,y_2, z_2, k_2)$,
$$|g(\omega, s_1, y_1, z_1, k_1) - g(\omega, s_2, y_2, z_2, k_2)| \leq \Lambda(|s_2-s_1|)+C_g (|y_1 - y_2| + |z_1 - z_2| +   \|k_1 - k_2 \|_\nu).$$
\end{definition}

In the case of BSDEs with jumps, the coefficient $g$ must satisfy an additional assumption, which allows to apply the comparison theorem for BSDEs with jumps (see
Theorem \ref{QS13:th4.2}), which extends the result of \cite{Roy06}. More precisely, the driver $g$ satisfies the following assumption:

\begin{assumption}\label{hypo1} A Lipschitz driver $g$ is said to satisfy
  Assumption \ref{hypo1} if the following holds : $dP \otimes dt $ a.s. for
  each $(y,z,k_1,k_2) \in \R^2 \times (L^2_{\nu})^2$, we have
  \begin{align*}
    g(t,y,z,k_1)-g(t,y,z,k_2) \ge \langle \theta_t^{y,z,k_1,k_2},k_1-k_2 \rangle_{\nu},
  \end{align*}
  with
  \begin{align*}
    \theta:&\Omega \times [0,T] \times \R^2 \times (L^2_{\nu})^2 \longmapsto
    L^2_{\nu};\\
    &(\omega,t,y,z,k_1,k_2) \longmapsto \theta_t^{y,z,k_1,k_2}(\omega,\cdot)
  \end{align*}
  $\mP \otimes \mB(\R^2) \otimes \mB((L^2_{\nu})^2)$-measurable, bounded, and satisfying $dP
  \otimes dt \otimes \nu(du)$-a.s., for each $(y,z,k_1,k_2) \in \R^2 \times
  (L^2_{\nu})^2,$
  \begin{align*}
    \theta^{y,z,k_1,k_2}_t(u) \ge -1 \mbox{ and } |\theta^{y,z,k_1,k_2}_t(u)|
    \le \psi(u),
  \end{align*}
  where $\psi \in L^2_{\nu}$.
\end{assumption}


We now recall the ''Mokobodzki's condition'' which is essential in the case of doubly reflected BSDEs, since it ensures the existence of a solution. This condition essentially  postulates the existence of a quasimartingale between the barriers.

\begin{definition}[Mokobodzki's condition]\label{Moko}

Let $\xi$, $\zeta$ be in
  $\mS^2$. There exist two nonnegative RCLL supermartingales $H$ and $H'$ in
  $\mS^2$ such that
  \begin{align*}
\forall t \in [0,T],\;\; \xi_t \ind_{t < T} \le H_t-H'_t \le \zeta_t
\ind_{t<T} \mbox{ a.s.}
  \end{align*}
\end{definition}

\begin{assumption}\label{hypo2} $\xi$ and $\zeta$ are two adapted RCLL
  processes with $\xi_T=\zeta_T$ a.s., $\xi \in \mS^2$, $\zeta \in \mS^2$,
  $\xi_t \le \zeta_t$ for all $t \in [0,T]$, the Mokobodzki's condition holds
  and $g$ is a Lipschitz driver satisfying Assumption \ref{hypo1}.
\end{assumption}

We introduce the following general reflected BSDE with jumps and two RCLL
obstacles

\begin{definition}
  Let $T>0$ be a fixed terminal time and $g$ be a Lipschitz driver. Let $\xi$
  and $\zeta$ be two adapted RCLL processes with $\xi_T=\zeta_T$ a.s., $\xi
  \in \mS^2$, $\zeta \in \mS^2$, $\xi_t \le \zeta_t$ for all $t\in[0,T]$
  a.s. A process $(Y,Z,U,\alpha)$ is said to be a solution of the double barrier reflected
  BSDE (DBBSDE) associated with driver $g$ and barriers $\xi,\zeta$ if
  
  \begin{align}\label{eq0}
  \left \lbrace \begin{tabular}{l}
    \mbox{(i) $Y \in \mS^2$, $Z \in \H^2$, $U \in \H^2_{\nu}$ and $\alpha \in
      \mS^2$, where $\alpha=A-K$ with $A,K$ in $\mA^2$}\\
    \mbox{(ii) $Y_t=\xi_T+\int_t^T g(s,Y_s,Z_s,U_s)ds+(A_T-A_t)-(K_T-K_t)-\int_t^T Z_s
      dW_s-\int_t^T \int_{\R^*} U_s(e)\tilde{N}(ds,de)$},\\
    \mbox{(iii) $\forall t \in [0,T]$, $\xi_t \le Y_t \le \zeta_t$ a.s.,}\\
    \mbox{(iv) $\int_0^T (Y_{t^-} -\xi_{t^-}) dA_t=0$ a.s.  and $\int_0^T (\zeta_{t^-} - Y_{t^-})
      dK_t=0$ a.s.}
  \end{tabular} \right.
\end{align}

\end{definition}

\begin{remark}\label{rem14}
  Condition (iv) is equivalent to the following condition : if $K=K^c+K^d$ and
  $A=A^c+A^d$, where $K^c$ (resp. $K^d$) represents the continuous (resp. the
  discontinous) part of $K$ (the same notation holds for $A$), then
  \begin{align*}
    \int_0^T (Y_t-\xi_t)dA^c_t=0 \mbox{ a.s.},\;\;  \int_0^T
    (\zeta_t-Y_t)dK^c_t=0 \mbox{ a.s.}
  \end{align*}
  and
  \begin{align*}
    \forall \tau \in \mathcal{T}_0 \mbox{ predictable},\;\; \Delta
    A^d_{\tau}=\Delta A^d_{\tau}\ind_{Y_{\tau^-}=\xi_{\tau^-}} \mbox{ and}\;\; \Delta
    K^d_{\tau}=\Delta K^d_{\tau}\ind_{Y_{\tau^-}=\zeta_{\tau^-}}.
  \end{align*}
\end{remark}

\begin{theorem}(\cite[Theorem 4.1]{DQS14})\label{thm1}
  Suppose $\xi$ and $\zeta$ are RCLL adapted processes in $\mS^2$ such that
  for all $t\in [0,T]$, $\xi_t\le \zeta_t$ and Mokobodzki's condition
  holds (see Definition \ref{Moko}). Then, DBBSDE \eqref{eq0} admits a unique solution $(Y,Z,U,\alpha)$ in
  $\mS^2\times \H^2 \times \H^2_{\nu} \times \mA^2$.
\end{theorem}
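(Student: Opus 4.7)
The plan is to prove the two assertions separately, uniqueness by an It\^o-type argument and existence by a double penalization scheme.

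For uniqueness, suppose $(Y^1,Z^1,U^1,\alpha^1)$ and $(Y^2,Z^2,U^2,\alpha^2)$ both solve \eqref{eq0}. I would apply It\^o's formula to $e^{\beta t}|Y^1_t-Y^2_t|^2$ on $[t,T]$, for $\beta$ to be chosen large enough. The terminal contribution vanishes since $Y^1_T=Y^2_T=\xi_T$, and the stochastic integrals against $W$ and $\tilde N$ are true martingales thanks to the $\mS^2,\H^2,\H^2_\nu$ integrability. The Lipschitz property of $g$, combined with Assumption \ref{hypo1} which provides the $\theta$-representation controlling the jump part, gives the standard quadratic bound on the driver differences. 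The crucial extra term is $\int_t^T(Y^1_{s^-}-Y^2_{s^-})(d\alpha^1_s-d\alpha^2_s)$. Writing $\alpha^i=A^i-K^i$ and exploiting the minimality conditions through Remark \ref{rem14} (both the continuous Stieltjes conditions and the jump conditions on predictable stopping times, together with $\xi\le Y^i\le\zeta$), one shows this integral is nonpositive. A Gronwall argument then yields $Y^1\equiv Y^2$, after which the orthogonality of $dW$ and $d\tilde N$ gives $Z^1=Z^2$ in $\H^2$ and $U^1=U^2$ in $\H^2_\nu$; identifying the finite variation parts finally yields $\alpha^1=\alpha^2$.

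For existence, I would adopt a double penalization approach. For each $(n,p)\in\N^2$ consider the standard (non-reflected) BSDE with jumps
\begin{equation*}
Y^{n,p}_t=\xi_T+\int_t^T g(s,Y^{n,p}_s,Z^{n,p}_s,U^{n,p}_s)\,ds+n\!\int_t^T(\xi_s-Y^{n,p}_s)^+ds-p\!\int_t^T(Y^{n,p}_s-\zeta_s)^+ds-\int_t^T Z^{n,p}_s dW_s-\int_t^T\!\!\int_{\R^*}U^{n,p}_s(e)\tilde N(ds,de).
\end{equation*}
Existence and uniqueness of $(Y^{n,p},Z^{n,p},U^{n,p})$ follow from the Tang--Li theorem since the penalized driver remains Lipschitz. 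Using the comparison theorem for BSDEs with jumps (valid under Assumption \ref{hypo1}, cf.\ Remark \ref{rem11}), one obtains monotonicity: $Y^{n,p}$ is nondecreasing in $n$ and nonincreasing in $p$. Mokobodski's condition is used precisely here to construct, via the supermartingales $H,H'$ and their Snell envelopes, comparison processes producing a uniform $\mS^2$ bound for $Y^{n,p}$ independent of $(n,p)$.

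The main obstacle, and the heart of the argument, is passing to the limit. First fix $p$ and let $n\to\infty$ to get $Y^p$, then let $p\to\infty$. At each stage one must (i) extract strong $\H^2\times\H^2_\nu$ convergence for $(Z^{n,p},U^{n,p})$ from the monotone $\mS^2$ convergence of $Y^{n,p}$, and (ii) identify the weak limits of the penalization terms $A^{n,p}_t:=n\int_0^t(\xi_s-Y^{n,p}_s)^+ds$ and $K^{n,p}_t:=p\int_0^t(Y^{n,p}_s-\zeta_s)^+ds$ as elements of $\mA^2$. Step (i) relies on a generalized monotonic convergence theorem for BSDEs adapted to the jump setting (the extension carried out in the paper's Appendix). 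Step (ii) is delicate because the obstacles are only RCLL: one first shows $\xi_t\le Y_t\le\zeta_t$ by taking limits in $n(\xi-Y^{n,p})^+\ge 0$ and $p(Y^{n,p}-\zeta)^+\ge 0$, then establishes the Skorokhod conditions on continuous parts by a Saisho-type argument, and finally handles the discontinuous parts by testing against predictable stopping times and invoking a Snell-envelope characterization of the limiting processes to recover condition (iv) of Remark \ref{rem14}, thereby placing predictable jumps of $A$ only where $Y_{\tau^-}=\xi_{\tau^-}$ and predictable jumps of $K$ only where $Y_{\tau^-}=\zeta_{\tau^-}$. The limit $(Y,Z,U,A-K)$ then solves \eqref{eq0}.
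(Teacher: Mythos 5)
The paper does not actually prove this statement: Theorem \ref{thm1} is imported verbatim from \cite[Theorem 4.1]{DQS14}, and uniqueness in particular is never re-derived anywhere in the paper (the proof of Theorem \ref{thm5} simply invokes Theorem \ref{thm1} for it). What the paper does contain is an \emph{alternative existence} proof by double penalization (Section \ref{sect:int_res} together with the generalized monotonic limit theorem of the Appendix and the Dynkin-game characterization of Proposition \ref{stochasticgame}), and your existence sketch is essentially a reconstruction of that argument. Your uniqueness argument (It\^o on $|Y^1-Y^2|^2$, nonpositivity of $\int(Y^1_{s^-}-Y^2_{s^-})\,d(\alpha^1-\alpha^2)_s$ via the Skorokhod conditions and $\xi\le Y^i\le\zeta$, then Gronwall) is standard and correct; note only that Assumption \ref{hypo1} plays no role there --- the Lipschitz bound in $\|\cdot\|_\nu$ suffices.

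There are two genuine gaps on the existence side. First, every step of your scheme after Tang--Li rests on the comparison theorem for BSDEs with jumps, which is \emph{not} valid for a general Lipschitz driver: it requires Assumption \ref{hypo1} (cf.\ Remark \ref{rem11}). Theorem \ref{thm1} as stated does not assume \ref{hypo1}; \cite{DQS14} proves it by a contraction/Dynkin-game argument precisely to avoid comparison. So your proof establishes the theorem only under the stronger Assumption \ref{hypo2}, i.e.\ it proves the paper's Theorem \ref{thm4}/\ref{thm5}, not Theorem \ref{thm1}. Second, the identification of conditions (iii)--(iv) for the limit is where the real work lies, and the mechanism you describe would not go through as written: you take only one iterated limit ($n\to\infty$ then $p\to\infty$), which yields only a one-sided inequality $Y_t\ge \essinf_\sigma\esssup_\tau\E[I_t(\tau,\sigma)|\mF_t]$. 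The paper must run \emph{both} orders of penalization, prove the two limits coincide (Lemma \ref{lem3}), sandwich $Y$ between the upper and lower values of the Dynkin game, and only then read off (iii) and (iv) from the coupled Snell envelope system $X^+=\mR(X^-+\tilde{\xi})$, $X^-=\mR(X^+-\tilde{\zeta})$ of \cite{LX07} and its Doob--Meyer decomposition. A ``Saisho-type'' argument on the continuous parts followed by testing predictable stopping times is not how the discontinuous minimality condition is recovered here, and $n(\xi-Y^{n,p})^+\ge 0$ alone does not give $Y\ge\xi$ for an RCLL obstacle --- one needs the $O(1/n)$ bound on $\E\int(\xi_s-Y^{n,p}_s)^+ds$ combined with the one-barrier results of \cite{Ess08} (or a section-theorem argument), which is how the paper proceeds.
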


\begin{remark} As said in \cite[Remark 4.3]{DQS14}, if for all $t \in ]0,T]$
  $\xi_{t^-}< \zeta_{t^-}$ a.s., \cite[Proposition 4.2]{DQS14} gives the
  uniqueness of $A,K \in (\mA^2)^2$.
\end{remark}

\begin{definition}[convergence in $J_1$-Skorokhod topology]
${\xi}^n$  is said to converge in probability (resp. in $L^2$) to $\xi$
      for the $J_1$-Skorokhod topology,
   if there exists a family
  $(\psi^n)_{n\in \N}$ of one-to-one random time changes (or stochastic changes of time scale) from $[0,T]$ to
  $[0,T]$  such that $\sup_{t \in [0,T]}|\psi^n(t)-t| \xrightarrow[n\rightarrow
  \infty]{}0$ almost surely and $\sup_{t \in [0,T]}
|{\xi}^n_{\psi^n(t)}-\xi_t|\xrightarrow[n \rightarrow \infty]{}0$ in
probability (resp. in $L^2$). Throughout the paper, we denote this convergence
$||\xi^n-\xi||_{J_1-\mathbb{P}} \rightarrow 0$ (resp. $||\xi^n-\xi||_{J_1-L^2} \rightarrow 0$).

\end{definition}

\subsection{Framework for our numerical scheme}

In order to propose an implementable numerical scheme we consider that the
Poisson random measure is simply generated by the jumps of a Poisson process. 
We consider a Poisson process $\{N_t: 0 \leq t \leq T\}$ with intensity $\lambda$ and jumps times $\{\tau_k: k=0,1,... \}$. The random measure is then
\begin{align*}
\Tilde{N}(dt,de)=\sum_{k=1}^{N_t} \delta_{\tau_k,1}(dt,de)-\lambda dt \delta_1(de)
\end{align*}
where $\delta_a$ denotes the Dirac measure  at the point $a$. In the
following, $\tilde{N}_t:=N_t-\lambda t$. Then, the unknown function $U_s(e)$ does
not depend on the magnitude $e$ anymore, and we write $U_s:=U_s(1)$.

In this particular case, \eqref{eq0} becomes:

 \begin{align}\label{DBBSDE1}
  \left \lbrace \begin{tabular}{l}
    \mbox{(i) $Y \in \mS^2$, $Z \in \H^2$, $U \in \H^2$ and $\alpha \in
      \mS^2$, where $\alpha=A-K$ with $A,K$ in $\mA^2$}\\
    \mbox{(ii) $Y_t=\xi+\int_t^T g(s,Y_s,Z_s,U_s)ds+(A_T-A_t)-(K_T-K_t)-\int_t^T Z_s
      dW_s-\int_t^T U_sd\tilde{N}_s$},\\
    \mbox{(iii) $\forall t \in [0,T]$, $\xi_t \le Y_t \le \zeta_t$ a.s.,}\\
    \mbox{(iv) $\int_0^T (Y_{t^-} -\xi_{t^-}) dA_t=0$ a.s.  and $\int_0^T (\zeta_{t^-} - Y_{t^-})
      dK_t=0$ a.s.}
  \end{tabular} \right.
\end{align}


In view of the proof of the convergence of the numerical scheme, we also
introduce the penalized version of \eqref{DBBSDE1}:
\begin{align}\label{eq_pen_scheme}
  Y^p_t=&\xi+\int_t^T g(s,Y^p_s,Z^p_s,U^p_s)ds +A^p_T-A^p_t
  -(K^p_T-K^p_t) -\int_t^T Z^p_s dW_s-\int_t^T U^p_s d\tilde{N}_s,
\end{align}
with $A^p_t:=p\int_0^t (Y^p_s-\xi_s)^-ds$ and $K^p_t:=p\int_0^t
(\zeta_s-Y^p_s)^- ds$, and $\alpha^p_t:=A^p_t-K^p_t$ for all $t\in [0,T]$.\\

\section{Numerical scheme} The basic idea is to approximate the Brownian
motion and the Poisson process by random walks based on the binomial tree
model. As explained in Section \ref{sect:mart_rep}, these approximations
enable to get a martingale representation whose coefficients, involving
conditional expectations, can be easily computed. Then, we approximate
  $(W,\tilde{N})$ in the penalized version of our DBBSDE (i.e. in
  \eqref{eq_pen_scheme}) by using these random walks. Taking conditional
  expectation and using the martingale representation leads to the {\it explicit
  penalized discrete scheme} \eqref{discrete_explicite}. In view of the proof
of the convergence of this explicit scheme, we
introduce an implicit intermediate scheme \eqref{discrete_implicite}.

\subsection{Discrete time Approximation}

We adopt the framework of \cite{LMT07}, presented below.

\subsubsection{Random walk approximation of $(W,\tilde{N})$}
For $n \in \mathbb{N}$, we introduce $\delta_n:=\frac{T}{n}$ and the regular grid
$(t_j)_{j=0,...,n}$ with step size $\delta_n$ (i.e. $t_j:=j \delta_n$) to
discretize $[0,T]$. In order to approximate $W$, we introduce the following
random walk
\begin{equation}
\begin{cases}
W_0^n=0\\
W_t^n=\sqrt{\delta_n} \sum_{i=1}^{[t/\delta_n]} e_i^n 
\end{cases}
\end{equation}
where $e_1^n, e_2^n,...,e_n^n$ are independent identically distributed random
variables with the following symmetric Bernoulli law:
$$P(e_1^n=1)=P(e_1^n=-1)=\frac{1}{2}.$$
To approximate $\tilde{N}$, we introduce a second random walk
\begin{equation}
\begin{cases}
\Tilde{N}_0^n=0\\
\Tilde{N}_t^n=\sum_{i=1}^{[t/\delta_n]}\eta_i^n 
\end{cases}
\end{equation}
where $\eta_1^n, \eta_2^n,...,\eta_n^n$ are independent and identically distributed random variables with law 
$$P(\eta_1^n=\kappa_n-1)=1-P(\eta_1^n=k_n)=\kappa_n $$
where $\kappa_n=e^{-\frac{\lambda}{n}}.$
We assume that both sequences $e_1^n,...,e_n^n$ and $\eta_1^n,
\eta_2^n,...,\eta_n^n$ are defined on the original probability space $(\Omega,
\mathbb{F}, P).$ The (discrete) filtration in the probability space
is $\mathbb{F}^n=\{\mathcal{F}_j^n: j=0,...,n \}$ with
$\mathcal{F}_0^n=\{\Omega,\emptyset \}$ and
$\mathcal{F}_j^n=\sigma\{e_1^n,...,e_j^n, \eta_1^n,...,\eta_j^n\}$ for
$j=1,...,n.$



The following result states the convergence of $(W^n,\tilde{N}^n)$ to
$(W,\tilde{N})$ for the $J_1$-Skorokhod topology, and the convergence of $W^n$
to $W$ in any $L^p$, $p\ge 1$, for the topology of uniform convergence on
$[0,T]$. We refer to \cite[Section 3]{LMT07} for more results on the
convergence in probability of $\mF^n$-martingales.

\begin{lemma}(\cite[Lemma3, (III)]{LMT07}, and \cite[Proof of Corollary
  2.2]{BDM01})\label{lem8} The couple $(W^n,\tilde{N}^n)$ converges in
  probability to $(W,\tilde{N})$ for the $J_1$-Skorokhod topology, and
  \begin{align*}
    \sup_{0\le t \le T} |W^n_t-W_t|\rightarrow 0\mbox{ as }n \rightarrow \infty
  \end{align*}
  in probability and in $L^p$, for any $1\le p < \infty$.
\end{lemma}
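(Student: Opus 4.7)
The plan is to split the statement into two pieces: (a) the joint convergence of $(W^n,\tilde N^n)$ to $(W,\tilde N)$ in the $J_1$-Skorokhod topology, and (b) the stronger uniform convergence of $W^n$ to $W$ in $L^p$. These require different techniques---a functional martingale central limit theorem for (a) and an explicit pathwise coupling for (b).

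For (a), I would apply the Jacod--Shiryaev functional martingale central limit theorem to the bivariate martingale $M^n := (W^n,\tilde N^n)$. Since the sequences $(e_i^n)_i$ and $(\eta_i^n)_i$ are independent, the predictable cross-variation $\langle W^n,\tilde N^n\rangle$ vanishes, so the limit components are independent. Componentwise one checks that: (i) $\langle W^n\rangle_t = \delta[t/\delta] \to t$ while the jump sizes $\sqrt\delta$ tend to $0$, so $W^n$ converges to a continuous Gaussian martingale with quadratic variation $t$, i.e.\ a Brownian motion; (ii) using $1-\kappa_n \sim \lambda/n$, one has $\langle \tilde N^n\rangle_t = [t/\delta]\kappa_n(1-\kappa_n) \to \lambda t$, while at each time step $\tilde N^n$ makes an upward jump of size $\kappa_n\to 1$ with probability $1-\kappa_n \sim \lambda/n$, so that the predictable jump measure converges to the L\'evy measure of $\tilde N$ (a point mass of weight $\lambda$ at $1$). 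Together these give the claimed joint $J_1$-convergence in probability.

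For (b), I would realise the Bernoulli random walk on the same probability space as $W$ via Skorokhod embedding. Set $\tau_0^n := 0$ and $\tau_i^n := \inf\{s \ge \tau_{i-1}^n : |W_s - W_{\tau_{i-1}^n}| = \sqrt\delta\}$. Optional sampling gives $W_{\tau_i^n} - W_{\tau_{i-1}^n} = \pm\sqrt\delta$ with equal probability, independent across $i$, so one may take $e_i^n := \delta^{-1/2}(W_{\tau_i^n}-W_{\tau_{i-1}^n})$, yielding $W^n_{t_j} = W_{\tau_j^n}$. Moreover $\E[\tau_i^n-\tau_{i-1}^n] = \delta$, with finite moments of every order. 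Applying Doob's maximal inequality to the mean-zero random walk $(\tau_j^n - j\delta)_j$ gives $\E[\max_{j \le n}|\tau_j^n - j\delta|^p] \to 0$ for each $p\ge 1$, and combining this with the $L^p$-controlled modulus of continuity of Brownian motion yields $\sup_{t\le T}|W^n_t - W_t| \to 0$ in $L^p$.

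The main obstacle is upgrading the convergence in probability of the supremum---which follows quickly from the Skorokhod embedding together with uniform continuity of $W$---to full $L^p$ convergence for arbitrary $p\ge 1$. This requires uniform integrability of $(\sup_{t\le T}|W^n_t - W_t|)^p$, controlled via the Doob estimate on $\max_j|\tau_j^n - j\delta|$ and standard moment bounds on the oscillations of $W$. As these ingredients are classical, I would in practice invoke Lemma~3(III) of LMT07 and the proof of Corollary~2.2 in BDM01 rather than reproving them in full.
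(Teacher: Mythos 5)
The paper offers no proof of this lemma---it is quoted directly from \cite[Lemma 3, (III)]{LMT07} and \cite[Proof of Corollary 2.2]{BDM01}---and your sketch correctly reconstructs the arguments of those references: a Jacod--Shiryaev functional limit theorem for the joint $J_1$-convergence (vanishing cross-variation by independence, $\langle W^n\rangle_t\to t$ with jumps $\sqrt\delta\to 0$ for the Brownian component, and rare unit-size jumps at rate $\lambda$ for the compensated-Poisson component), together with the Skorokhod embedding and a Doob/Marcinkiewicz--Zygmund estimate on $\max_{j\le n}|\tau^n_j-j\delta|$ for the uniform $L^p$-convergence of $W^n$. Your observation that part (b) requires realising $(e^n_i)_i$ on the same probability space as $W$ via the embedding is exactly the right caveat: for an arbitrary coupling of the Bernoulli variables with $W$ the uniform convergence would fail, so the lemma must be read for that particular construction, which is the one used in \cite{BDM01}.
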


\subsubsection{Martingale representation}\label{sect:mart_rep}

Let $y_{j+1}$ denote a $\mF^n_{j+1}$-measurable random variable. As said in
\cite{LMT07}, we need a set of three strongly orthogonal martingales to
represent the martingale difference $m_{j+1}:=y_{j+1}-\E(y_{j+1}|\mF^n_j)$. We
introduce a third martingale increments sequence
$\{\mu^n_j=e^n_j\eta^n_j,j=0,\cdots,n\}$. In this context there exists a
unique triplet $(z_j,u_j,v_j)$ of $\mF^n_j$-random variables such that
\begin{align*}
  m_{j+1}:=y_{j+1}-\E(y_{j+1}|\mF^n_j)=\sqrt{\delta_n} z_j e^n_{j+1} + u_j
  \eta^n_{j+1} +v_j \mu^n_{j+1},
\end{align*}
and
\begin{align}\label{eq27}
  \left\{
  \begin{array}{l}
      z_j=\displaystyle{\frac{1}{\sqrt{\delta_n}}}\E(y_{j+1}e^n_{j+1}| \mF^n_j),\\
      u_j=\displaystyle{\frac{\E(y_{j+1}\eta^n_{j+1}|\mF^n_j)}{\E((\eta^n_{j+1})^2|\mF^n_j)}=\frac{1}{\kappa_n(1-\kappa_n)}\E(y_{j+1}\eta^n_{j+1}|
      \mF^n_j)},\\
      v_j=\displaystyle{\frac{\E(y_{j+1}\mu^n_{j+1}|\mF^n_j)}{\E((\mu^n_{j+1})^2|\mF^n_j)}=\frac{1}{\kappa_n(1-\kappa_n)}\E(y_{j+1}\mu^n_{j+1}|
      \mF^n_j)}\\
    \end{array}\right.
\end{align}

\begin{remark}(Computing the conditional expectations)\label{rem:cond_exp} Let $\Phi$ denote a
  function from $\R^{2j+2}$ to $\R$. We use the following formula to compute
  the conditional expectations
  \begin{align*}
    \E(\Phi(e^n_1,\cdots,e^n_{j+1},\eta^n_1,\cdots,\eta^n_{j+1})|\mF^n_j)=&\frac{\kappa_n}{2}\Phi(e^n_1,\cdots,e^n_{j},1,\eta^n_1,\cdots,\eta^n_{j},\kappa_n-1)\notag\\
    &+\frac{\kappa_n}{2}\Phi(e^n_1,\cdots,e^n_{j},-1,\eta^n_1,\cdots,\eta^n_{j},\kappa_n-1)\notag\\
    &+\frac{1-\kappa_n}{2}\Phi(e^n_1,\cdots,e^n_{j},1,\eta^n_1,\cdots,\eta^n_{j},\kappa_n)\notag\\
    &+\frac{1-\kappa_n}{2}\Phi(e^n_1,\cdots,e^n_{j},-1,\eta^n_1,\cdots,\eta^n_{j},\kappa_n).
  \end{align*}
\end{remark}

\subsection{Fully implementable numerical scheme}

In this Section we present two numerical schemes to approximate the solution
of the penalized equation \eqref{eq_pen_scheme}: the first one,
\eqref{discrete_implicite}, is an implicit intermediate scheme, useful for the proof of
convergence. We also introduce the main scheme \eqref{discrete_explicite}, which is
explicit. The implicit scheme \eqref{discrete_implicite} is not easy
to solve numerically, since it involves to inverse a function, as we will see
below. However, it plays an important role in the proof of the convergence of
the explicit scheme, that's
why we introduce it.\\

In both schemes, we approximate the barrier $(\xi_t)_t$ (resp. $(\zeta_t)_t$)
by $(\xi^n_j)_{j=0,\cdots,n}$ (resp. $(\zeta^n_j)_{j=0,\cdots,n}$). We also
introduce their continuous time versions:
\begin{align*}
\overline{\xi}^n_t:=\xi^n_{[t/\delta_n]},\;\;
\overline{\zeta}^n_t:=\zeta^n_{[t/\delta_n]}.
\end{align*}
These approximations satisfy



\begin{assumption}\label{hypo6}\hfill
  \begin{align*}
    & \mbox{(i) For some }r>2,\; \sup_{n \in \N} \max_{j\le n}
    \E(|\xi^n_j|^r)+\sup_{n \in \N} \max_{j\le n} \E(|\zeta^n_j|^r)+\sup_{t\le T} \E|\xi_t|^r+\sup_{t \le T} \E|\zeta_t|^r  < \infty\\
    & \mbox{(ii) $\overline{\xi}^n$ (resp $\overline{\zeta}^n$) converges in probability to $\xi$
      (resp. $\zeta$) for the $J_1$-Skorokhod topology.}
    \end{align*}
\end{assumption}


\begin{remark}\label{rem10} Assumption \ref{hypo6} implies that for all $t$ in $[0,T]$ $\overline{\xi}^n_{\psi^n(t)}$ (resp. $\overline{\zeta}^n_{\psi^n(t)}$) converges
  to $\xi_t$ (resp. $\zeta_t$) in $L^2$.
\end{remark}

\begin{remark}\label{rem15} Let us give different examples of barriers in $\mathcal{S}^2$
  satisfying Assumption \ref{hypo6}. In this Remark, $X$ represents either $\xi$ or $\zeta$.
  \begin{enumerate} 
  \item  $X$ satisfies the following SDE
  \begin{align*}
    X_t=X_0+\int_0^t
    b_{X}(X_{s^-})ds+\int_0^t\sigma_{X}(X_{s^-})dW_s+\int_0^t c_{X}(X_{s^-})d\tilde{N}_s
  \end{align*}
  where $b_{X}$, $\sigma_{X}$ and
  $c_{X}$ are Lipschitz functions. We approximate it by
  \begin{align*}
    \overline{X}^n_t=\overline{X}^n_0+\sum_{j=0}^{[t / \delta_n]-1}
    b_{X}(\overline{X}^n_{j \delta_n}) \delta_n+\int_0^t\sigma_{X}(\overline{X}^n_{s^-})dW^n_s+\int_0^t c_{X}(\overline{X}^n_{s^-})d\tilde{N}^n_s
  \end{align*} Since $(W^n,\tilde{N}^n)$
  converges in probability to $(W,\tilde{N})$ for the $J_1$-topology,
  \cite[Corollary 1]{SLO89} gives that $\overline{X}^n$ converges to $X$ in
  probability for the $J_1$-topology (for more details on the convergence of
  sequences of stochastic integrals on the space of RCLL functions endowed
  with the $J_1$-Skorokhod topology, we refer to \cite{JMP89}). Then,
  $\overline{X}^n$ satisfies Assumption \ref{hypo6} $(ii)$. We deduce from
  Doob and Burkh{\"o}lder-Davis-Gundy inequalities that $X$ and $\overline{X}^n$ satisfy
  Assumption \ref{hypo6} $(i)$ and that $X$
  belongs to $\mathcal{S}^2$.
\item $X$ is defined by $X_t:=\Phi(t,W_t,\tilde{N}_t)$, where $\Phi$ satisfies
  the following assumptions
  \begin{enumerate}
  \item $\Phi(t,x,y)$ is uniformly continuous in $(t,y)$ uniformly in $x$, i.e. there
    exist two continuous non decreasing functions $g_0(\cdot)$ and $g_1(\cdot)$ from $\R_+$ to
    $\R_+$ with linear growth and satisfying $g_0(0)=g_1(0)=0$
    such that
    \begin{align*}
      \forall\;(t,t',x,y,y'), \;\;|\Phi(t,x,y)-\Phi(t',x,y')|\le g_0(|t-t'|)+g_1(|y-y'|).
    \end{align*}
    We denote $a_0$ (resp. $a_1$) the constant of linear growth for $g_0$
    (resp. $g_1$) i.e.  $\forall \; (t,y)\in (\R_+)^2$,
    $0 \le g_0(t)+g_1(y)\le a_0(1+t)+a_1(1+y)$,
     \item $\Phi(t,x,y)$ is ``strongly'' locally Lispchitz in $x$ uniformly in $(t,y)$,
    i.e. there exists a constant $K_0$ and an integer $p_0$ such that
    \begin{align*}
      \forall\;(t,x,x',y),\;\;|\Phi(t,x,y)-\Phi(t,x',y)|\le K_0(1+|x|^{p_0}+|x'|^{p_0})|x-x'|.
    \end{align*}
  \end{enumerate} Then, $\forall (t,x,y)$ we have $|\Phi(t,x,y)|\le a_0|t|+a_1|y|+K_0(1+|x|^{p_0})|x|+|\Phi(0,0,0)|+a_0+a_1$. From this inequality, we prove that $X$ satisfies Assumption \ref{hypo6}
  $(i)$ by standard computations. Since $(\tilde{N}^n)$ converges in
  probability to $(\tilde{N})$ for the $J_1$-topology and $\lim_{n\rightarrow
    \infty} \sup_t|W^n_t-W_t|=0$ in $L^p$ for any $p$ (see Lemma \ref{lem8}), we get that
  $(X^n_t)_t:=(\Phi(\delta_n [t/\delta_n],W^n_t,\tilde{N}^n_t))_t$ converges in probability to $X$
  for the $J_1$-topology.
\end{enumerate}
  
\end{remark}

\subsubsection{Intermediate penalized implicit discrete scheme}
After the discretization of the penalized equation \eqref{eq_pen_scheme} on time intervals $[t_j, t_{j+1}]_{0 \leq j \leq n-1}$, we get the following discrete
backward equation. For all $j$ in $\{0,\cdots,n-1\}$

\begin{equation}\label{discrete_implicite_2}
\begin{cases}
y_{j}^{p,n}=y_{j+1}^{p,n}+g(t_j, y_j^{p,n}, z_j^{p,n}, u_j^{p,n}) \delta_n+a_j^{p,n}-k_j^{p,n}-(z_j^{p,n}\sqrt{\delta_n}e_{j+1}^n+u_j^{p,n}\eta_{j+1}^n+v_j^{p,n}\mu_{j+1}^n)\\
a_j^{p,n}=p \delta_n(y_j^{p,n}-\xi_j^n)^{-} \text{;   }k_j^{p,n}=p \delta_n(\zeta_j^n-y_j^{p,n})^{-},\\
y_n^{p,n}:=\xi^n_n.
\end{cases}
\end{equation}

Following \eqref{eq27}, the triplet $(z_j^{p,n}, u_j^{p,n}, v_j^{p,n})$ can be
computed as follows 

\begin{align*}
  \left\{ \begin{array}{l}
    \displaystyle{  z_j^{p,n}=\frac{1}{\sqrt{\delta_n}}\E(y^{p,n}_{j+1}e^n_{j+1}| \mF^n_j),}\\
      \displaystyle{u_j^{p,n}=\frac{1}{\kappa_n(1-\kappa_n)}\E(y^{p,n}_{j+1}\eta^n_{j+1}|
      \mF^n_j),}\\
      \displaystyle{v^{p,n}_j=\frac{1}{\kappa_n(1-\kappa_n)}\E(y^{p,n}_{j+1}\mu^n_{j+1}|
      \mF^n_j),}\\
    \end{array}\right.
\end{align*}
where we refer to Remark \ref{rem:cond_exp} for the computation of conditional
expectations. By taking the conditional expectation w.r.t. $\mF^n_j$ in
\eqref{discrete_implicite_2}, we get the following scheme, called {\it implicit
  penalized discrete scheme}: $y_n^{p,n}:=\xi^n_n$ and for $j=n-1,\cdots,0$ 
\begin{equation}\label{discrete_implicite}
  \begin{cases}
    y_{j}^{p,n}=(\Theta^{p,n})^{-1}(\E(y^{p,n}_{j+1}|\mF^n_j)),\\
    a_j^{p,n}=p \delta_n(y_j^{p,n}-\xi_j^n)^{-} \text{;   }k_j^{p,n}=p \delta_n(\zeta_j^n-y_j^{p,n})^{-},\\
    z_j^{p,n}=\displaystyle{\frac{1}{\sqrt{\delta_n}}}\E(y^{p,n}_{j+1}e^n_{j+1}| \mF^n_j),\\
    u_j^{p,n}=\displaystyle{\frac{1}{\kappa_n(1-\kappa_n)}}\E(y^{p,n}_{j+1}\eta^n_{j+1}|
    \mF^n_j),\\
  \end{cases}
\end{equation}
where $\Theta^{p,n}(y)=y-g(j\delta_n, y, z_j^{p,n},
u_j^{p,n})\delta_n-p\delta_n(y-\xi_j^n)^{-}+p\delta_n (\zeta_j^n-y)^{-}$.

We also introduce the continuous time version $(Y_t^{p,n}, Z_t^{p,n}, U_t^{p,n}, A_t^{p,n}, K_t^{p,n})_{ 0 \leq t \leq T}$ of the solution
to \eqref{discrete_implicite}:
\begin{align}\label{eq32}
  Y_t^{p,n}:=y_{[t/\delta_n]}^{p,n}, Z_t^{p,n}:=z_{[t/\delta_n]}^{p,n},
  U_t^{p,n}:=u_{[t/\delta_n]}^{p,n}, 
  A_t^{p,n}:=\sum_{i=0}^{[t/\delta_n]}a_i^{p,n}, K_t^{p,n}:=
  \sum_{i=0}^{[t/\delta_n]}k_i^{p,n}.
\end{align}
We also introduce ${\alpha}^{p,n}_t:={A}^{p,n}_t-{K}^{p,n}_t$, for
all $t \in [0,T]$.


\subsubsection{Main scheme}

As said before, the numerical inversion of the operator $\Theta^{p,n}$ is not
easy and is time consuming. If we replace $y^{p,n}_j$ by
$\E(y^{p,n}_{j+1}|\mF^n_j)$ in $g$, \eqref{discrete_implicite_2} becomes

\begin{equation}\label{discrete_explicite_2}
\begin{cases}
\ov{y}_{j}^{p,n}=\ov{y}_{j+1}^{p,n}+g(t_j, \E(\ov{y}^{p,n}_{j+1}|\mF^n_j), \ov{z}_j^{p,n}, \ov{u}_j^{p,n}) \delta_n+\ov{a}_j^{p,n}-\ov{k}_j^{p,n}-(\ov{z}_j^{p,n}\sqrt{\delta_n}e_{j+1}^n+\ov{u}_j^{p,n}\eta_{j+1}^n+\ov{v}_j^{p,n}\mu_{j+1}^n)\\
\ov{a}_j^{p,n}=p \delta_n(\ov{y}_j^{p,n}-\xi_j^n)^{-} \text{;   }\ov{k}_j^{p,n}=p \delta_n(\zeta_j^n-\ov{y}_j^{p,n})^{-},\\
\ov{y}_n^{p,n}:=\xi^n_n.
\end{cases}
\end{equation}

Now, by taking the conditional expectation in the above equation, we obtain:
\begin{equation}\label{expressiony}
\overline{y}_{j}^{p,n}=\E[\overline{y}_{j+1}^{p,n}|\mathcal{F}_j^n]+g(t_j, \E[\overline{y}_{j+1}^{p,n}|\mathcal{F}_j^n], \overline{z}_j^{p,n}, \overline{u}_j^{p,n})\delta_n+\ov{a}^{p,n}_j-\ov{k}^{p,n}_j.
\end{equation}


Solving this equation, we get the following scheme, called {\it explicit
  penalized scheme}: $\ov{y}_n^{p,n}:=\xi^n_n$ and for $j=n-1,\cdots,0$

\begin{equation}\label{discrete_explicite}
  \begin{cases}
    \ov{y}_{j}^{p,n}=\E[\ov{y}_{j+1}^{p,n}|\mathcal{F}^n_j]+g(t_j, \E(\ov{y}^{p,n}_{j+1}|\mF^n_j), \ov{z}_j^{p,n}, \ov{u}_j^{p,n}) \delta_n+\ov{a}_j^{p,n}-\ov{k}_j^{p,n},\\
    \ov{a}_j^{p,n}=\dfrac{p \delta_n}{1+p
      \delta_n}\left(\E[\overline{y}_{j+1}^{p,n}|\mathcal{F}_j^n]+\delta_n g(t_j,
      \E[\overline{y}_{j+1}^{p,n}|\mathcal{F}_j^n], \overline{z}_j^{p,n},
      \overline{u}_j^{p,n})-\xi_j^n\right)^{-},\\
    \ov{k}^{p,n}_j=\dfrac{p \delta_n}{1+p \delta_n}\left(\zeta_j^n-\E[\overline{y}_{j+1}^{p,n}|\mathcal{F}_j^n]-\delta_n g(t_j, \E[\overline{y}_{j+1}^{p,n}|\mathcal{F}_j^n], \overline{z}_j^{p,n}, \overline{u}_j^{p,n})\right)^{-}\\
    \displaystyle{\ov{z}_j^{p,n}=\frac{1}{\sqrt{\delta_n}}\E(\ov{y}^{p,n}_{j+1}e^n_{j+1}| \mF^n_j),}\\
    \ov{u}_j^{p,n}=\displaystyle{\frac{1}{\kappa_n(1-\kappa_n)}}\E(\ov{y}^{p,n}_{j+1}\eta^n_{j+1}|\mF^n_j).\\
\end{cases}
\end{equation}
  
\begin{remark}[Explanations on the derivation of the main scheme]
We give below some explanations concerning the derivation of the values of $\ov{a}_j^{p,n}$ and $\ov{k}_j^{p,n}$.   We  consider the following cases:
\begin{itemize}
\item[$\bullet$] If $ {\xi}_j^{n}<  \ov{y}_{j}^{p,n}<{\zeta}_j^{n}$, then by  $\eqref{discrete_explicite_2}$ we get $\ov{a}_j^{p,n}=\ov{k}_j^{p,n}=0$, which corresponds to\\ $\dfrac{p \delta_n}{1+p \delta_n}\left(\E[\overline{y}_{j+1}^{p,n}|\mathcal{F}_j^n]+\delta_n g(t_j,
      \E[\overline{y}_{j+1}^{p,n}|\mathcal{F}_j^n], \overline{z}_j^{p,n},
      \overline{u}_j^{p,n})-\xi_j^n\right)^{-}=\dfrac{p \delta_n}{1+p \delta_n}\left(\overline{y}_j^{p,n}-\xi_j^n\right)^{-}=0$ and\\ $\dfrac{p \delta_n}{1+p \delta_n}\left(\zeta_j^n-\E[\overline{y}_{j+1}^{p,n}|\mathcal{F}_j^n]-\delta_n g(t_j, \E[\overline{y}_{j+1}^{p,n}|\mathcal{F}_j^n], \overline{z}_j^{p,n}, \overline{u}_j^{p,n})\right)^{-}=\dfrac{p \delta_n}{1+p \delta_n}(\zeta_j^n-\ov{y}_j^{p,n})^-=0.$
 \item[$\bullet$] If $ {\xi}_j^{n} \geq  \ov{y}_{j}^{p,n}$, then  by  $\eqref{discrete_explicite_2}$ we have $\ov{a}_j^{p,n}=p \delta_n(\xi_j^n-\ov{y}_{j}^{p,n})$ and $\ov{k}_j^{p,n}=0$; we then replace $\ov{a}_j^{p,n}$  and $\ov{k}_j^{p,n}$ in $\eqref{expressiony}$ and we get  $\ov{a}_j^{p,n}=\dfrac{p \delta_n}{1+p
      \delta_n}\left(\E[\overline{y}_{j+1}^{p,n}|\mathcal{F}_j^n]+g(t_j,
      \E[\overline{y}_{j+1}^{p,n}|\mathcal{F}_j^n], \overline{z}_j^{p,n},
      \overline{u}_j^{p,n})\delta_n-\xi_j^n\right)^{-}.$ We also have $\dfrac{p \delta_n}{1+p \delta_n}\left(\zeta_j^n-\E[\overline{y}_{j+1}^{p,n}|\mathcal{F}_j^n]-\delta_n g(t_j, \E[\overline{y}_{j+1}^{p,n}|\mathcal{F}_j^n], \overline{z}_j^{p,n}, \overline{u}_j^{p,n})\right)^{-}=0$ and hence\\ $    \ov{k}^{p,n}_j=\dfrac{p \delta_n}{1+p \delta_n}\left(\zeta_j^n-\E[\overline{y}_{j+1}^{p,n}|\mathcal{F}_j^n]-\delta_n g(t_j, \E[\overline{y}_{j+1}^{p,n}|\mathcal{F}_j^n], \overline{z}_j^{p,n}, \overline{u}_j^{p,n})\right)^{-}$.
\item[$\bullet$] The case $ {\zeta}_j^{n} \leq  \ov{y}_{j}^{p,n}$ is symmetric to the one studied above: $ {\xi}_j^{n} \geq \ov{y}_{j}^{p,n}.$
\end{itemize}

\end{remark}

As for the implicit scheme, we define the continuous time version
$(\overline{Y}_t^{p,n},\overline{Z}_t^{p,n},\overline{U}_t^{p,n},
\overline{A}_t^{p,n}, \overline{K}_t^{p,n})_{0 \le t \le T}$ of the solution
to \eqref{discrete_explicite}:
\begin{align}\label{eq31}
\overline{Y}_t^{p,n}=\overline{y}_{[t/\delta_n]}^{p,n}, \quad  \overline{Z}_t^{p,n}=\overline{z}_{[t/\delta_n]}^{p,n}, \quad  \overline{U}_t^{p,n}=\overline{u}_{[t/\delta_n]}^{p,n}, \quad 
\overline{A}_t^{p,n}= \sum_{j=0}^{[t/\delta_n]} \overline{a}_j^{p,n} \quad 
\overline{K}_t^{p,n}= \sum_{j=0}^{[t/ \delta_n]} \overline{k}_j^{p,n}.
\end{align}
We also introduce $\ov{\alpha}^{p,n}_t:=\ov{A}^{p,n}_t-\ov{K}^{p,n}_t$, for
all $t \in [0,T]$.

\section{Convergence result}


The following result states the convergence
of $\ov{\Theta}^{p,n}:=(\ov{Y}^{p,n},\ov{Z}^{p,n},\ov{U}^{p,n},
  \ov{\alpha}^{p,n})$ to $\Theta:=({Y}, {Z}, {U},
  \alpha)$, the solution of the DBBSDE \eqref{DBBSDE1}.

\begin{theorem}\label{main_thm}
Assume that Assumptions \ref{hypo2} and \ref{hypo6} hold. The sequence
$(\ov{Y}^{p,n}, \ov{Z}^{p,n}, \ov{U}^{p,n})$ defined by \eqref{eq31}
converges to $(Y,Z,U)$, the solution of the DBBSDE \eqref{DBBSDE1}, in the
following sense: $\forall r \in [1,2[$
\begin{align}\label{conv1}
\lim_{p \rightarrow \infty} \lim_{n \rightarrow
  \infty}\left( \mathbb{E}\left[\int_0^T|\ov{Y}_{s}^{p,n}-Y_s|^2ds\right] +
\mathbb{E}\left[\int_0^T|\ov{Z}_{s}^{p,n}-Z_s|^rds\right]+ \mathbb{E}\left[\int_0^T|\ov{U}_{s}^{p,n}-U_s|^rds\right]\right)= 0.
\end{align}
Moreover, $\ov{Z}^{p,n}$ (resp. $\ov{U}^{p,n}$) weakly converges in $\H^2$ to $Z$
(resp. to $U$) and for $0 \leq t \leq T$, $\ov{\alpha}_{\psi^n(t)}^{p,n}$ converges weakly to  $\alpha_t$ in
$L^2(\mathcal{F}_T)$ as $n \rightarrow \infty$ and $p \rightarrow
\infty$. 
\end{theorem}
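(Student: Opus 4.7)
The plan is to split the error $\bar{\Theta}^{p,n}-\Theta$ into three pieces by inserting the continuous-time penalized solution $\Theta^p=(Y^p,Z^p,U^p,\alpha^p)$ of \eqref{eq_pen_scheme} and the implicit discrete solution $\Theta^{p,n}=(Y^{p,n},Z^{p,n},U^{p,n},\alpha^{p,n})$ of \eqref{eq32}, writing
\[
\bar{\Theta}^{p,n}-\Theta \;=\; (\bar{\Theta}^{p,n}-\Theta^{p,n}) \;+\; (\Theta^{p,n}-\Theta^p) \;+\; (\Theta^p-\Theta),
\]
and then passing to the limit first in $n$ (with $p$ fixed) and then in $p$. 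This matches the three-step program announced in the introduction.

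For the penalization step $\Theta^p \to \Theta$, I would first establish uniform-in-$p$ bounds on $(Y^p,Z^p,U^p,A^p,K^p)$ in $\mS^2\times\H^2\times\H^2_\nu\times(\mA^2)^2$ using Mokobodski's condition (this is where the two supermartingales $H,H'$ are used to squeeze $Y^p$). Then I would use Snell envelope characterizations of $Y^p$ together with the comparison theorem for BSDEs with jumps (Remark \ref{rem11}, which relies on Assumption \ref{hypo1}) to show monotonicity/precompactness. A generalized monotone convergence argument adapted to the jump setting, extending \cite{LX07}, yields strong $\mS^2$ convergence of $Y^p$ to $Y$, $\H^2 \times \H^2_\nu$ convergence of $(Z^p,U^p)$ in the $L^r$ sense for $r<2$, together with weak $\H^2$ convergence, and weak $L^2(\mF_T)$ convergence of $\alpha^p$ to $\alpha$. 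This also delivers an independent existence proof for \eqref{DBBSDE1} in this setting.

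For the discretization step $\Theta^{p,n}\to\Theta^p$ at fixed $p$, I would use Lemma \ref{lem8} to transfer the convergence of $(W^n,\tilde{N}^n)\to(W,\tilde{N})$ in the $J_1$-Skorokhod topology, together with Assumption \ref{hypo6} on the barriers, into convergence of the discrete penalized BSDE to the continuous one. The framework is the Donsker-type theorem of \cite{BDM01} for BSDEs, extended by \cite{LMT07} to jump processes; here the driver $g(\cdot,y,z,u)-p(y-\xi)^-+p(\zeta-y)^-$ is Lipschitz in $(y,z,u)$ (with constant depending on $p$, but $p$ is fixed at this stage) and the barrier terms are handled using Assumption \ref{hypo6}(ii) via the time-change $\psi^n$. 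The key technical point is the martingale representation \eqref{eq27} combined with a Gronwall argument, giving the $L^2$ convergence of $Y^{p,n}$, the weak $\H^2$ convergence of $Z^{p,n},U^{p,n}$ (strong in $L^r$, $r<2$), and the weak $L^2(\mF_T)$ convergence of $\alpha^{p,n}_{\psi^n(t)}$.

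For the implicit-to-explicit step $\bar{\Theta}^{p,n}-\Theta^{p,n}$, I would write the difference of the two schemes recursively: setting $\delta y_j:=\bar y^{p,n}_j-y^{p,n}_j$ and similarly for $(z,u,v,a,k)$, subtracting \eqref{discrete_implicite} from \eqref{discrete_explicite} and using Lipschitz continuity of $g$ and of $x\mapsto x^-$ gives a discrete inequality of the form $\E|\delta y_j|^2 \le (1+C_{p}\delta)\E|\delta y_{j+1}|^2 + C\delta^2(\cdots)$, where the additional $\delta$ comes from the substitution $y^{p,n}_j \leftrightarrow \E(y^{p,n}_{j+1}|\mF^n_j)$, which differs by an $O(\sqrt{\delta})$ martingale term. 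Backward Gronwall then yields $\sup_j\E|\delta y_j|^2=O(\delta)$ and similar bounds for the other components, so this contribution vanishes as $n\to\infty$ with $p$ fixed. Assembling the three pieces (with the order $\lim_p \lim_n$) and the weak-convergence statements from steps 2--3 gives \eqref{conv1} and the last assertions.

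The main obstacle, in my view, is step 2, the penalization convergence: handling simultaneously a $g$ that depends on $(y,z,u)$, RCLL barriers with both predictable and totally inaccessible jumps, and the need to identify the limit processes $A,K$ so that the flat-off conditions (iv) of \eqref{DBBSDE1} (equivalently, Remark \ref{rem14}) hold. This is precisely the point the authors single out as going beyond \cite{EHO05,HH06,HW09,LX07}, and it is what forces the machinery of Snell envelopes plus a jump-version of the generalized monotonic theorem proved in the appendix.
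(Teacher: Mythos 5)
Your proposal follows essentially the same route as the paper: the identical three-term decomposition via $\Theta^{p,n}$ and $\Theta^p$, with the limit taken first in $n$ then in $p$, and the three pieces handled respectively by a discrete Gronwall argument for the explicit--implicit gap (the paper's Proposition \ref{prop5}), the Donsker-type convergence of \cite{LMT07} for the time-discretization (Proposition \ref{discreteT} and Corollary \ref{cor1}), and a penalization argument combining Mokobodski's condition, Snell envelopes, the comparison theorem and the generalized monotonic limit theorem (Proposition \ref{prop1} and Section \ref{sect:int_res}). Your identification of the penalization step as the main difficulty, and of the reasons why, also matches the paper's own emphasis.
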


  In order to prove this result, we split the error in three terms, by
  introducing $\linebreak[4]\Theta^{p,n}_t:=({Y}_t^{p,n}, {Z}_t^{p,n}, {U}_t^{p,n},
  {\alpha}_t^{p,n})$, the solution of the implicit penalized discrete scheme \eqref{eq32} and $\Theta^{p}_t:=({Y}_t^{p}, {Z}_t^{p},
  {U}_t^{p}, {\alpha}_t^{p})$, the penalized version of
  \eqref{DBBSDE1}, defined by \eqref{eq_pen_scheme}. For the error on $Y$, we
  get
  \begin{align*}
\mathbb{E}[\int_0^T |\ov{Y}_{s}^{p,n}-Y_s|^2ds] \le 3\left(\mathbb{E}[\int_0^T
  |\ov{Y}_{s}^{p,n}-Y^{p,n}_s|^2ds] +\mathbb{E}[\int_0^T
  |Y_{s}^{p,n}-Y^p_s|^2ds] +\mathbb{E}[\int_0^T
  |Y_{s}^{p}-Y_s|^2ds]  \right),
\end{align*}
and the same splitting holds for $|\ov{Z}^{p,n}-Z|^r$ and $|\ov{U}^{p,n}-U|^r$. For the increasing processes, we have:
\begin{align}\label{Aconv}
\E[|\ov{\alpha}_{\psi^n(t)}^{p,n}-\alpha_{t}|^2] \leq
3\left(\E[|\ov{\alpha}_{\psi^n(t)}^{p,n}-\alpha_{\psi^n(t)}^{p,n}|^2]+\E[|\alpha_{\psi^n(t)}^{p,n}-\alpha^p_t|^2]+\E[|\alpha_t^p-\alpha_t|^2]\right).
\end{align}

The proof of Theorem \ref{main_thm} ensues from Proposition \ref{prop5},
Corollary \ref{cor1} and Proposition \ref{prop1}. Proposition \ref{prop5}
states the convergence of the error between $\ov{\Theta}^{p,n}$, the explicit
penalization scheme defined in \eqref{eq31}, and ${\Theta}^{p,n}$, the
implicit penalization scheme. It generalizes the results of \cite{PX11}. We
refer to Section \ref{sect:proof_conv_exp_imp}. Corollary \ref{cor1} states
the convergence (in $n$) of ${\Theta}^{p,n}$ to ${\Theta}^{p}$. This is based
on the convergence of a standard BSDE with jumps in discrete time setting
to the associated BSDE with jumps in continuous time setting, which is proved
in \cite{LMT07}. We refer to Section
\ref{sect:proof_conv_temps_discret}. Finally, Proposition \ref{prop1} proves
the convergence (in $p$) of the penalized BSDE with jumps ${\Theta}^{p}$ to
${\Theta}$, the solution of the DBBSDE \eqref{DBBSDE1}. In fact, we prove a
more general result in Section \ref{sect:proof_conv_pen}, since we show the
convergence of penalized BSDEs to \eqref{eq0} in the case of jumps driven
by a general Poisson random measure.

The rest of the Section is devoted to the proof of these results.

\subsection{Error between explicit and implicit penalization schemes}\label{sect:proof_conv_exp_imp}

We prove the convergence of the error between the explicit penalization scheme and the
implicit one. The scheme of the proof is inspired from \cite[Proposition 5]{PX11}.
\begin{proposition}\label{prop5} Assume Assumption \ref{hypo6} $(i)$ and $g$
  is a Lipschitz driver. We have
\begin{align*}
\lim_{n \rightarrow \infty} \sup_{0 \leq t \leq T}\left(\E[
|\overline{Y}_t^{p,n}-Y_t^{p,n}|^2]+\E[\int_0^T|\overline{Z}_s^{p,n}-Z_s^{p,n}|^2ds]+\E[\int_0^T|\overline{U}_s^{p,n}-U_s^{p,n}|^2 ds]\right) = 0.
\end{align*}
Moreover,
$\lim_{n\rightarrow \infty} (\overline{\alpha}^{p,n}_t-\alpha^{p,n}_t)=0$ in
$L^2(\mF_t)$, for $t\in [0,T]$.
\end{proposition}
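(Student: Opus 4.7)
The plan is to run a discrete BSDE-type error analysis on the difference between the two schemes, extracting a monotonicity property from the penalization that removes the barrier terms, and then applying discrete Gronwall. Write $\Delta y_j := \ov{y}^{p,n}_j - y^{p,n}_j$, and similarly $\Delta z_j,\Delta u_j,\Delta v_j,\Delta a_j,\Delta k_j$, together with $\Delta M_{j+1} := \sqrt{\delta}\,\Delta z_j\,e^n_{j+1} + \Delta u_j\,\eta^n_{j+1} + \Delta v_j\,\mu^n_{j+1}$. Subtracting \eqref{discrete_implicite_2} from \eqref{discrete_explicite_2} gives
\[
\Delta y_j = \Delta y_{j+1} + \tilde{g}_j\,\delta + \Delta a_j - \Delta k_j - \Delta M_{j+1}, \qquad \Delta y_n = 0,
\]
with $\tilde{g}_j := g(t_j,\E[\ov{y}^{p,n}_{j+1}|\mF^n_j],\ov{z}^{p,n}_j,\ov{u}^{p,n}_j) - g(t_j,y^{p,n}_j,z^{p,n}_j,u^{p,n}_j)$.

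Squaring and using the pairwise conditional orthogonality of $e^n_{j+1},\eta^n_{j+1},\mu^n_{j+1}$ (Section~\ref{sect:mart_rep}), together with $\E[(e^n_{j+1})^2]=1$ and $\E[(\eta^n_{j+1})^2]=\E[(\mu^n_{j+1})^2]=\kappa_n(1-\kappa_n)\sim\lambda\delta$, taking conditional expectations yields
\[
(\Delta y_j)^2 + \E[|\Delta M_{j+1}|^2|\mF^n_j] = \E[(\Delta y_{j+1})^2|\mF^n_j] + 2\Delta y_j\,\tilde{g}_j\,\delta + 2\Delta y_j(\Delta a_j-\Delta k_j) - (\tilde{g}_j\delta + \Delta a_j - \Delta k_j)^2.
\]
The crux is the sign of the barrier term: since $x\mapsto(x-\xi^n_j)^-$ is non-increasing, $\Delta y_j\,\Delta a_j\le 0$, and symmetrically $-\Delta y_j\,\Delta k_j\le 0$ by monotonicity of $x\mapsto(\zeta^n_j-x)^-$. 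Dropping these non-positive contributions and the non-positive quadratic $-(\tilde{g}_j\delta+\Delta a_j-\Delta k_j)^2$, using Young's inequality together with the Lipschitz bound $|\tilde{g}_j|\le C_g(|\E[\ov{y}^{p,n}_{j+1}|\mF^n_j]-y^{p,n}_j| + |\Delta z_j| + |\Delta u_j|)$, the $\delta(\Delta z_j)^2$ and $\delta(\Delta u_j)^2$ on the right are absorbed on the left, leaving
\[
\E[(\Delta y_j)^2] \le (1+C_p\delta)\,\E[(\Delta y_{j+1})^2] + C_p\delta\,\E\bigl|\E[\ov{y}^{p,n}_{j+1}|\mF^n_j] - y^{p,n}_j\bigr|^2.
\]

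The last error term is handled by writing it as $\E[\Delta y_{j+1}|\mF^n_j] + (\E[y^{p,n}_{j+1}|\mF^n_j] - y^{p,n}_j)$ and observing from \eqref{discrete_implicite_2} under $\E[\,\cdot\,|\mF^n_j]$ that the second piece equals $-g(t_j,y^{p,n}_j,z^{p,n}_j,u^{p,n}_j)\delta - a^{p,n}_j + k^{p,n}_j$, whose $L^2$-norm is of order $\delta$ once one has the standard \emph{a priori} discrete BSDE bounds $\sup_n(\max_j\E|y^{p,n}_j|^2 + \sum_j\delta\,\E|z^{p,n}_j|^2 + \sum_j\delta\,\E|u^{p,n}_j|^2)<\infty$ at fixed $p$ (from the Lipschitz property of $g$ and Assumption~\ref{hypo6}(i)). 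Backward discrete Gronwall then yields $\sup_j\E[(\Delta y_j)^2]\le C_p\,\delta\to 0$, which is the $Y$-part of the statement. Summing the identity instead of iterating, and using the same absorption of the martingale increments, produces $\sum_j\delta\,\E(\Delta z_j)^2 + \sum_j\kappa_n(1-\kappa_n)\E(\Delta u_j)^2\to 0$, hence the $\H^2$-convergence of $\ov{Z}^{p,n}-Z^{p,n}$ and $\ov{U}^{p,n}-U^{p,n}$. For the $\alpha$-part, telescoping the error recursion gives $\ov{\alpha}^{p,n}_{t_j}-\alpha^{p,n}_{t_j} = \Delta y_0 - \Delta y_{j+1} - \sum_{i\le j}\tilde{g}_i\,\delta + \sum_{i\le j}\Delta M_{i+1}$, and each piece is controlled in $L^2$ by the bounds already obtained (the martingale sum via orthogonality), yielding the pointwise $L^2(\mF_t)$-convergence.

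I expect the main obstacle to be the uniform-in-$n$ control of the $L^2$-norm of $\E[y^{p,n}_{j+1}|\mF^n_j] - y^{p,n}_j$: the penalization has Lipschitz constant $p$, so the naive estimate blows up with $p$, but this is acceptable because the proposition fixes $p$ and lets $n\to\infty$ first, so constants depending on $p$ are allowed. The sign argument on the barrier differences, while simple, is the essential trick that lets the analysis go through without having to track the (large) penalization magnitudes inside the Gronwall iteration.
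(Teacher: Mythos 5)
Your proposal is correct and follows essentially the same route as the paper's proof: the same error recursion between \eqref{discrete_explicite_2} and \eqref{discrete_implicite_2}, the same monotonicity argument that makes the penalization cross-term $\Delta y_j(\Delta a_j-\Delta k_j)$ non-positive, the same Young/absorption step for the $z$- and $u$-increments followed by discrete Gronwall, and the same telescoping of the forward form for the $\alpha$-part. The only (minor) difference is in handling the one-step term: you split $\E[\ov{y}^{p,n}_{j+1}|\mF^n_j]-y^{p,n}_j$ through the implicit scheme's one-step equation and its a priori bounds, whereas the paper adds and subtracts $\ov{y}^{p,n}_j$, uses $\ov{y}^{p,n}_j-\E[\ov{y}^{p,n}_{j+1}|\mF^n_j]=g_p(\cdot)\delta$, and invokes the a priori bounds of Lemma \ref{lem4} for the explicit scheme; both variants rest on the same routine discrete estimate with constants depending on $p$, which is admissible since $n\to\infty$ is taken at fixed $p$.
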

 Recall that
\begin{align*}
  Y_t^{p,n}=y_{[t/\delta_n]}^{p,n}, Z_t^{p,n}=z_{[t/\delta_n]}^{p,n},
  U_t^{p,n}=u_{[t/\delta_n]}^{p,n}, 
  A_t^{p,n}=\sum_{i=0}^{[t/\delta_n]}a_i^{p,n}, K_t^{p,n}=
  \sum_{i=0}^{[t/\delta_n]}k_i^{p,n}.
\end{align*}
In a similar way we have defined the continuous time versions of $(\bar{y}^{p,n}, \bar{z}^{p,n},\bar{u}^{p,n},\bar{a}^{p,n},\bar{k}^{p,n})$, denoted by $(\bar{Y}^{p,n},\bar{Z}^{p,n},\bar{U}^{p,n},\bar{A}^{p,n},\bar{K}^{p,n})$. \\
\begin{proof}

By using the definitions of the implicit and explicit schemes
\eqref{discrete_implicite_2} and \eqref{discrete_explicite_2}, we obtain that:
\begin{align*} y_{j+1}^{p,n}-\overline{y}_{j+1}^{p,n}=&
  (y_{j}^{p,n}-\overline{y}_{j}^{p,n})+(g_p(t_j,
  \E[\overline{y}_{j+1}^{p,n}|\mathcal{F}_j^n],\overline{y}_j^{p,n},
  \overline{z}_j^{p,n}, \overline{u}_j^{p,n})-g(t_j,
  y_{j}^{p,n},y_{j}^{p,n},z_j^{p,n},u_j^{p,n})) \delta_n\\
  &+(z_{j}^{p,n}-\overline{z}_{j}^{p,n})e_{j+1}^n \sqrt \delta_n +
  (u_{j}^{p,n}-\overline{u}_{j}^{p,n})\eta_{j+1}^n+
  (v_{j}^{p,n}-\overline{v}_{j}^{p,n})\mu_{j+1}^n
\end{align*} where
$g_p(t,y_1,y_2,z,u)=g(t,y_1,z,u)+p(y_2-\overline{\xi}^n_t)^--p(\overline{\zeta}^n_t-y_2)^-$. It
implies that:
\begin{align*}
\E[(y_{j}^{p,n}-\overline{y}_{j}^{p,n})^2]=&
\E[(y_{j+1}^{p,n}-\overline{y}_{j+1}^{p,n})^2]-\E[(g_p(t_j, \E[\overline{y}_{j+1}^{p,n}|\mathcal{F}_j^n], \overline{y}_j^{p,n}, \overline{z}_j^{p,n}, \overline{u}_j^{p,n})-g_p(t_j, y_{j}^{p,n},y_{j}^{p,n},z_j^{p,n},u_j^{p,n}))^2]\delta_n^2\\
&- \E[(z_{j}^{p,n}-\overline{z}_{j}^{p,n})^2]\delta_n-\E[(u_{j}^{p,n}-\overline{u}_{j}^{p,n})^2](1-\kappa_n)\kappa_n-\E[(v_{j}^{p,n}-\overline{v}_{j}^{p,n})^2](1-\kappa_n)\kappa_n\\
&+2\E[(g_p(t_j,y_{j}^{p,n},y_{j}^{p,n}, z_j^{p,n}, u_j^{p,n})-g_p(t_j,\E[\overline{y}_{j+1}^{p,n}|\mathcal{F}_j^n],\overline{y}_j^{p,n},\overline{z}_j^{p,n}, \overline{u}_j^{p,n}))(y_j^{p,n}-\overline{y}_j^{p,n})]\delta_n.
\end{align*}
In the above relation, we take the sum over $j$ from $i$ to $n-1$. We have:
\begin{align*}
\E[(y_{i}^{p,n}-\overline{y}_{i}^{p,n})^2]&+\delta_n \sum_{j=i}^{n-1}\E[(z_{j}^{p,n}-\overline{z}_{j}^{p,n})^2]+(1-\kappa_n)\kappa_n\sum_{j=i}^{n-1}\E[ (u_{j}^{p,n}-\overline{u}_{j}^{p,n})^2]\\
&\leq 2\delta_n\sum_{j=i}^{n-1}\E[(g_p(t_j,y_{j}^{p,n},y_{j}^{p,n}, z_j^{p,n}, u_j^{p,n})-g_p(t_j,\E[\overline{y}_{j+1}^{p,n}|\mathcal{F}_j^n],\overline{y}_j^{p,n},\overline{z}_j^{p,n},\overline{u}_j^{p,n}))(y_j^{p,n}-\overline{y}_j^{p,n})].
\end{align*}
Let us introduce $f:y\longmapsto (y-\overline{\xi}^n_t)^-
-(\overline{\zeta}^n_t-y)^-$. We have
$g_p(t,y_1,y_2,z,u)=g(t,y_1,z,u)+pf(y_2)$. The last expectation of the previous
inequality can be written
\begin{align*}
\E[(g(t_j,y_{j}^{p,n}, z_j^{p,n}, u_j^{p,n})-g(t_j,\E[\overline{y}_{j+1}^{p,n}|\mathcal{F}_j^n],\overline{z}_j^{p,n}, \overline{u}_j^{p,n}))(y_j^{p,n}-\overline{y}_j^{p,n})+p(f(y_{j}^{p,n})-f(\overline{y}_j^{p,n}))(y_j^{p,n}-\overline{y}_j^{p,n})]
\end{align*}
Since $f$ is decreasing and $g$ is
Lipschitz, we obtain:
\begin{align*}
\E[(y_{i}^{p,n}-\overline{y}_{i}^{p,n})^2] &+ \delta_n\sum_{j=i}^{n-1}\E[ (z_{j}^{p,n}-\overline{z}_{j}^{p,n})^2]+(1-\kappa_n)\kappa_n\sum_{j=i}^{n-1}\E[ (u_{j}^{p,n}-\overline{u}_{j}^{p,n})^2]\\
&\leq
2\delta_n\sum_{j=i}^{n-1}\E\left[(C_g|y_{j}^{p,n}-\E[\overline{y}_{j+1}^{p,n}|\mathcal{F}_j^n]|+C_g|z_j^{p,n}-\overline{z}_j^{p,n}|+ C_g|u_j^{p,n}-\overline{u}_j^{p,n}|)|y_j^{p,n}-\overline{y}_j^{p,n}|\right].
\end{align*}
Consequently, by applying the inequality $2ab \leq a^2+b^2$  for $a=C_g |y_j^{p,n}-\overline{y}_j^{p,n}| \sqrt {2\delta_n};\,\,b= \displaystyle{\sqrt {\frac{\delta_n}{2}}} |z_j^{p,n}-\overline{z}_j^{p,n}|$ and $a=C_g |y_j^{p,n}-\overline{y}_j^{p,n}| \sqrt {2}\displaystyle{ \frac{\delta_n}{\sqrt {\kappa_n(1-\kappa_n)}}};\,\,b= \sqrt {\frac{\kappa_n(1-\kappa_n)}{2}} |u_j^{p,n}-\overline{u}_j^{p,n}|\,\,\,$   we get that:
\begin{align*}
\E[(y_{i}^{p,n}-\overline{y}_{i}^{p,n})^2] & +\delta_n\sum_{j=i}^{n-1}\E[ (z_{j}^{p,n}-\overline{z}_{j}^{p,n})^2]+(1-\kappa_n)\kappa_n\sum_{j=i}^{n-1}\E[ (u_{j}^{p,n}-\overline{u}_{j}^{p,n})^2]\\
&\leq 2\delta_n C_g^2\sum_{j=i}^{n-1} \E[
(y_j^{p,n}-\overline{y}_j^{p,n})^2]+\frac{\delta_n}{2}\sum_{j=i}^{n-1}\E[
(z_j^{p,n}-\overline{z}_j^{p,n})^2]+\frac{2C_g^2 \delta_n^2}{\kappa_n(1-\kappa_n)}\sum_{j=i}^{n-1}\E[
(y_j^{p,n}-\overline{y}_j^{p,n})^2]\\
&+\frac{(1-\kappa_n)\kappa_n}{2} \sum_{j=i}^{n-1}\E[(u_j^{p,n}-\overline{u}_j^{p,n})^2]+2C_g\delta_n\mathbb{E}[\sum_{j=i}^{n-1}|y_j^{p,n}-\overline{y}_j^{p,n}||y_j^{p,n}-\mathbb{E}[\overline{y}_{j+1}^{p,n}|\mathcal{F}_j^n]|].
\end{align*}
Now, since
$\overline{y}_j^{p,n}-\mathbb{E}[\overline{y}_{j+1}^{p,n}|\mathcal{F}_j^n]=g_p(t_j,\mathbb{E}[\overline{y}_{j+1}^{p,n}|\mathcal{F}_j^n],\overline{z}_j^{p,n},\overline{u}_j^{p,n})\delta_n$,
the last term is dominated by
\begin{equation*}
\delta_n\sum_{j=i}^{n-1}(2C_g+1)\mathbb{E}[(y_j^{p,n}-\overline{y}_j^{p,n})^2]+C_g^2\delta_n^3\sum_{j=i}^{n-1}\mathbb{E}[g_p(t_j,\mathbb{E}[\overline{y}_{j+1}^{p,n}|\mathcal{F}_j^n],\overline{y}_j^{p,n},\overline{z}_j^{p,n},\overline{u}_j^{p,n})^2].
\end{equation*}
Using the definition of $g_p$ yields
\begin{align*}
  g_p(t_j,\mathbb{E}[\overline{y}_{j+1}^{p,n}|\mathcal{F}_j^n],\overline{y}_j^{p,n},\overline{z}_j^{p,n},\overline{u}_j^{p,n})&\le
  |g(t_j,\mathbb{E}[\overline{y}_{j+1}^{p,n}|\mathcal{F}_j^n],\overline{z}_j^{p,n},\overline{u}_j^{p,n})|+p  (|\overline{y}_j^{p,n}|+|\xi^n_j|+|\zeta^n_j|),\\
  &\le |g(t_j,0,0,0)|+C_g(|\mathbb{E}[\overline{y}_{j+1}^{p,n}|\mathcal{F}_j^n]|+|\overline{z}_j^{p,n}|+|\overline{u}_j^{p,n}|)+p (|\overline{y}_j^{p,n}|+|\xi^n_j|+|\zeta^n_j|).
\end{align*}
We get
\begin{align*}
  \delta_n^3\sum_{j=i}^{n-1}\mathbb{E}[g_p(t_j,\mathbb{E}[\overline{y}_{j+1}^{p,n}|\mathcal{F}_j^n],\overline{y}_j^{p,n},\overline{z}_j^{p,n},\overline{u}_j^{p,n})^2]\le&
  C_0\delta_n^2(\delta_n
  \sum_{j=i}^{n-1}|g(t_j,0,0,0)|^2+\delta_n\sum_{j=i}^{n-1}|\overline{z}_j^{p,n}|^2+\delta_n\sum_{j=i}^{n-1}|\overline{u}_j^{p,n}|^2)\\
  &+C_0
  (p\delta_n)^2 (\max_{j}\E(|\xi^n_j|^2)+\max_{j}\E(|\zeta^n_j|^2))\\
  &+C_0\delta_n^2(1+p^2)\max_{j}\E(|\overline{y}_j^{p,n}|^2)
\end{align*}
where $C_0$ denotes a generic constant depending on $C_g$.
Since $\displaystyle{\frac{\delta_n}{(1-\kappa_n)\kappa_n}=\frac{1}{\lambda} \frac{\lambda
  \delta_n}{(1-e^{-\lambda \delta_n})e^{-\lambda \delta_n}}}$ and $\displaystyle{e^x \le \frac{x
  e^{2x}}{e^x-1}\le e^{2x}}$, we get
$\displaystyle{\frac{\delta_n}{(1-\kappa_n)\kappa_n}\le \frac{1}{\lambda}e^{2\lambda T}}$.
Hence, for $\delta_n$ small enough such that 
$(3+2p+2C_g+2C_g^2(1+\frac{1}{\lambda}e^{2 \lambda T}))\delta_n <1$, Lemma
\ref{lem4} enables to write:

\begin{align}\label{eq36}
  \E[(y_{i}^{p,n}-\overline{y}_{i}^{p,n})^2]+&\frac{\delta_n}{2}\E[
\sum_{j=i}^{n-1}(z_j^{p,n}-\overline{z}_j^{p,n})^2]+\frac{1}{2}(1-\kappa_n)\kappa_n\E[
\sum_{j=i}^{n-1}(u_j^{p,n}-\overline{u}_j^{p,n})^2]\notag \\
&\leq \left(1+2C_g+2C_g^2+\frac{2C_g^2\delta_n}{(1-\kappa_n)\kappa_n}\right)\delta_n \mathbb{E}[
\sum_{j=i}^{n-1}(y_j^{p,n}-\overline{y}_j^{p,n})^2]+C_1(p)\delta_n^2,
\end{align}
where
$C_1(p)=C_0(\|g(\cdot,0,0,0)\|^2_{\infty}+p^2(\sup_n\max_{j}\E|\xi^n_j|^2+\sup_{n}\max_j\E|\zeta^n_j|^2)+(1+p^2)K_{\mbox{Lem.}
\ref{lem4}})$, $K_{\mbox{Lem.}\ref{lem4}}$ denotes the constant appearing
in Lemma \ref{lem4}. Discrete Gronwall's Lemma (see \cite[Lemma 3]{PX11}) gives 
\begin{align*}
\sup_{i\leq n}\mathbb{E}[(y_{i}^{p,n}-\overline{y}_{i}^{p,n})^2]\leq
C_1(p)\delta_n^2e^{(1+2C_g+2C_g^2(1+\frac{1}{\lambda}e^{2 \lambda T}))T}.
\end{align*}
Since $\delta_n \le T$, $(1-\kappa_n)\kappa_n \ge \lambda \delta_n e^{-2\lambda
  T}$, and Equation \eqref{eq36} gives
\begin{align*}
  \E[\int_0^T|\overline{Z}_s^{p,n}-Z_s^{p,n}|^2ds]+
  \E[\int_0^T|\overline{U}_s^{p,n}-U_s^{p,n}|^2 ds] \le C'_1(p) \delta_n^2,
\end{align*}
where $C'_1(p)$ is another constant depending on $C_g$, $\lambda$, $ T$ and $C_1(p)$.
It remains to prove the convergence for the increasing processes. We have
\begin{align*}
  \ov{A}^{p,n}_t-\ov{K}^{p,n}_t=\ov{Y}^{p,n}_0-\ov{Y}^{p,n}_t-\int_0^t
  g(s,\ov{Y}^{p,n}_s,\ov{Z}^{p,n}_s,\ov{U}^{p,n}_s)ds + \int_0^t \ov{Z}^{p,n}_s dW^n_s +\int_0^t
  \ov{U}^{p,n}_s d\tilde{N}^n_s,\\
  A^{p,n}_t-K^{p,n}_t=Y^{p,n}_0-Y^{p,n}_t-\int_0^t
  g(s,Y^{p,n}_s,Z^{p,n}_s,U^{p,n}_s)ds + \int_0^t Z^{p,n}_s dW^n_s +\int_0^t
  U^{p,n}_s d\tilde{N}^n_s.\\
\end{align*}
Using the Lispchitz property of $g$ and the convergence of
$(\ov{Y}^{p,n}_s-Y^{p,n}_s,\ov{Z}^{p,n}_s-Z^{p,n}_s,\ov{U}^{p,n}_s-U^{p,n}_s)$, we get the result.
\end{proof}

 \subsection{Convergence of the discrete time setting to the continuous time
   setting}\label{sect:proof_conv_temps_discret}
 
The following Proposition ensues from \cite{LMT07}. 
 \begin{proposition}\label{discreteT}Let $g$ be a Lipschitz driver and assume
   that Assumption \ref{hypo6} $(ii)$ holds.
   For any $p \in \mathbb{N}^*$, the sequence $(Y_t^{p,n}, Z_t^{p,n}, U_t^{p,n})$ converges to $(Y_t^p,Z_t^p,U_t^p)$ in the following sense:
   \begin{align}\label{conv}
     \lim_{n \rightarrow \infty} \left(||Y^{p,n}-Y^p||_{J_1-L^2}^2+\mathbb{E}[\int_{0}^T|Z_s^{p,n}-Z_s^p|^2ds+
     \int_0^T|U_s^{p,n}-U_s^p|^2ds]\right) = 0.
   \end{align}
  
 \end{proposition}

 \begin{proof}
For a fixed $p$, we have the following:
\begin{align}\label{decomp}
Y^{p,n}-Y^p=(Y^{p,n}-Y^{p,n,q})+(Y^{p,n,q}-Y^{p,\infty,q})+(Y^{p,\infty,q}-Y^{p}).
\end{align} where $(Y^{p,\infty,q}, Z^{p,\infty,q}, U^{p,\infty,q} )$ is the
Picard approximation of $(Y^{p}, Z^{p}, U^{p} )$ and $(Y^{p,n,q}, Z^{p,n,q},
U^{p,n,q})$ represents the continuous time version of the discrete Picard
approximation of $(y_k^{p,n}, z_k^{p,n},u_k^{p,n})$, denoted by $(y_k^{p,n,q},
z_k^{p,n,q},u_k^{p,n,q})$. Note that  $(y_k^{p,n,q+1}, z_k^{p,n,q+1},u_k^{p,n,q+1})$
is defined inductively as the solution of the backward recursion given by
\cite[Eq. (3.16)]{LMT07}, for the penalized driver $g_n(\omega,t,y,z,u):=g(\omega,t,
y,z,u)+p(y-\overline{\xi}^n_t(\omega))^-
-p (\overline{\zeta}^n_t(\omega)-y)^-$. Since $\overline{\xi}^n$ and $\overline{\zeta}^n$ satisfy
Assumption \ref{hypo6} $(ii)$,  $(g_n(\omega,\cdot,\cdot,\cdot,\cdot))_n$ converges
uniformly to $g(\omega,\cdot,\cdot,\cdot,\cdot)+p(y-\xi_{t}(\omega))^-
-p (\zeta_{t}(\omega)-y)^-$ almost surely up to a
subsequence (i.e. $g_n$
satisfies \cite[Assumption (A')]{LMT07}).\\
Now, by using \eqref{decomp}, \cite[Proposition 1]{LMT07}, \cite[Proposition
3]{LMT07} and \cite[Eq. (3.17)]{LMT07}, one
can easily show that \eqref{conv} holds.
\end{proof}

 The following Corollary ensues from Proposition \ref{discreteT}.
 \begin{corollary}\label{cor1} Let $g$ be a Lipschitz driver, $\xi$ and
   $\zeta$ belong to $\mathcal{S}^2$, $\psi^n$ is the random mapping introduced
   in Proposition \ref{discreteT} and assume
   that Assumption \ref{hypo6} holds. For any $p \in \mathbb{N}^*$, the sequence $(Y_t^{p,n}, Z_t^{p,n}, U_t^{p,n})$ converges to $(Y_t^p,Z_t^p,U_t^p)$ in the following sense:
   \begin{align*}
     \lim_{n \rightarrow \infty}\mathbb{E}[\int_0^T |Y_s^{p,n}-Y_s^p|^2 ds +\int_{0}^T|Z_s^{p,n}-Z_s^p|^2ds+
     \int_0^T|U_s^{p,n}-U_s^p|^2ds] = 0,
   \end{align*}
   Moreover, $A^{p,n}$ (resp. $K^{p,n}$) converges
  to $A^p$ (resp. $K^p$) when $n$ tends to infinity in $L^2$ for the $J_1$-Skorokhod topology.
 \end{corollary}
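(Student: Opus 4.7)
The plan is to deduce the three claimed convergences from Proposition \ref{discreteT} by (i) removing the random time change $\psi^n$ from the convergence of $Y^{p,n}$, and (ii) reducing the statement on the increasing processes to the $L^2$-convergence of $Y^{p,n}$ and of the approximating barriers. The $Z^{p,n}$ and $U^{p,n}$ convergences are already contained in Proposition \ref{discreteT} and require no further argument.

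To remove the time change for $Y$, I set $\phi^n := (\psi^n)^{-1}$, which is continuous, one-to-one, and satisfies $\sup_{s\in[0,T]} |\phi^n(s)-s| = \sup_{t\in[0,T]} |\psi^n(t)-t| \to 0$ a.s., and split
\begin{align*}
\int_0^T |Y^{p,n}_s - Y^p_s|^2 ds \le 2\int_0^T |Y^{p,n}_s - Y^p_{\phi^n(s)}|^2 ds + 2\int_0^T |Y^p_{\phi^n(s)} - Y^p_s|^2 ds.
\end{align*}
The first integral, using $Y^{p,n}_s = Y^{p,n}_{\psi^n(\phi^n(s))}$, is bounded pathwise by $T \sup_{t\in[0,T]} |Y^{p,n}_{\psi^n(t)} - Y^p_t|^2$, whose expectation vanishes by Proposition \ref{discreteT}. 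For the second, since $Y^p \in \mS^2$ is RCLL its discontinuity set on $[0,T]$ is a.s.\ at most countable, hence Lebesgue-null; together with $\phi^n(s) \to s$ uniformly this yields $Y^p_{\phi^n(s)} \to Y^p_s$ for $(ds \otimes d\P)$-a.e.\ $(s,\omega)$, and the integrand being dominated by $4\sup_t |Y^p_t|^2 \in L^1$, dominated convergence closes the argument.

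For the increasing processes I would rewrite $A^{p,n}_t = p\int_0^{\delta([t/\delta]+1)} (Y^{p,n}_s - \ov\xi^n_s)^- ds$ from \eqref{eq32} and use $A^p_t = p\int_0^t (Y^p_s - \xi_s)^- ds$ from \eqref{eq_pen_scheme}. Using the $1$-Lipschitz property of $x \mapsto x^-$ and splitting the integration domain at $\min(\tau_n(t), t)$ with $\tau_n(t) := \delta([\psi^n(t)/\delta]+1)$ gives pathwise
\begin{align*}
|A^{p,n}_{\psi^n(t)} - A^p_t| \le p\int_0^T \bigl(|Y^{p,n}_s - Y^p_s| + |\ov\xi^n_s - \xi_s|\bigr) ds + p\bigl(\sup_t|\psi^n(t)-t|+\delta\bigr) M_n,
\end{align*}
where $M_n := \sup_{s\le T} (|Y^{p,n}_s| + |\ov\xi^n_s| + |Y^p_s| + |\xi_s|)$. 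Squaring, taking expectation and applying Cauchy--Schwarz, the first term is controlled by the $L^2(ds \otimes d\P)$ convergence of $Y^{p,n}$ to $Y^p$ just established together with the $L^2$-convergence $\ov\xi^n \to \xi$ (Remark \ref{rem10} combined with the uniform $L^r$-bound, $r>2$, of Assumption \ref{hypo6}(i) providing the necessary uniform integrability); the second term vanishes by H\"older's inequality, since $(\sup_t|\psi^n(t)-t|+\delta) \to 0$ a.s.\ and boundedly, while $M_n$ is uniformly $L^r$-bounded by standard a priori estimates for the penalized scheme and Assumption \ref{hypo6}(i). The same reasoning with $\ov\zeta^n$ and $\zeta$ in place of $\ov\xi^n$ and $\xi$ yields the convergence of $K^{p,n}_{\psi^n(t)}$ to $K^p_t$.

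The only delicate step is the first one: transferring the time-changed sup convergence from Proposition \ref{discreteT} into a plain $L^2(ds \otimes d\P)$ convergence requires simultaneously exploiting the a.s.\ uniform convergence $\phi^n(s) \to s$ and the RCLL regularity of $Y^p$ (whose jumps form a Lebesgue-null set in time). Once this is in place, the rest is routine Lipschitz-and-H\"older bookkeeping supported by the integrability afforded by Assumption \ref{hypo6}(i).
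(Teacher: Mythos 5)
Your treatment of the $Y$-term is exactly the paper's: the same splitting through the inverse time change $\eta^n=(\psi^n)^{-1}$, the sup-bound from Proposition \ref{discreteT} for the first piece, and the RCLL regularity of $Y^p$ (Lebesgue-null jump set) plus dominated convergence for the second; and you are right that the $Z$ and $U$ statements need nothing beyond Proposition \ref{discreteT}. So the first claim of the corollary is fine and matches the paper.

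The gap is in the increasing processes, specifically in your boundary term $p\bigl(\sup_t|\psi^n(t)-t|+\delta\bigr)M_n$ with $M_n:=\sup_{s\le T}\bigl(|Y^{p,n}_s|+|\ov{\xi}^n_s|+|Y^p_s|+|\xi_s|\bigr)$. To make $\E\bigl[(\sup_t|\psi^n(t)-t|+\delta)^2M_n^2\bigr]$ vanish you need uniform integrability of $(M_n^2)_n$, i.e.\ a bound on $\E[M_n^r]$ for some $r>2$ uniform in $n$; an $L^2$ bound alone does not suffice when both factors depend on $n$ (take $\epsilon_n=\ind_{E_n}$ with $\P(E_n)=1/n$ and $M_n^2=n\ind_{E_n}$). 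But Assumption \ref{hypo6}$(i)$ controls $\max_j\E|\xi^n_j|^r$, not $\E\max_j|\xi^n_j|^r$, and the a priori estimate available for the scheme (Lemma \ref{lem4}) controls $\sup_j\E|y^{p,n}_j|^2$, not $\E\sup_j|y^{p,n}_j|^r$; so the ``standard a priori estimates'' you invoke are not on the table as stated. The paper sidesteps this entirely by first writing $\sup_t|A^{p,n}_{\psi^n(t)}-A^p_t|=\sup_t|A^{p,n}_t-A^p_{\eta^n(t)}|$, so that the overhang term becomes $\sup_k\sup_{t\in[t_k,t_{k+1}]}|A^p_{t_k}-A^p_{\eta^n(t)}|$, which involves only the fixed processes $Y^p,\xi\in\mS^2$ and is killed by plain dominated convergence, while the grid-point term reduces to $p\int_0^T(|Y^{p,n}_s-Y^p_s|+|\ov{\xi}^n_s-\xi_s|)\,ds$ exactly as in your Lipschitz step. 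Your argument is repaired the same way: add and subtract $(Y^p_s-\xi_s)^-$ inside the overhang integral so that the $n$-dependent part is absorbed into the Lipschitz term and the remaining boundary piece is $p\int_{\min(\tau_n(t),t)}^{\max(\tau_n(t),t)}(Y^p_s-\xi_s)^-\,ds$, dominated by an $n$-independent integrable variable.
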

  
\begin{proof}

Note that:
\begin{align*}
\int_0^T|Y_s^{p,n}-Y_s^p|^2ds\leq2 \int_0^T|Y_s^{p,n}-Y_{\eta^n(s)}^p|^2dt+2\int_0^T|Y_{\eta^n(s)}^p-Y_s^p|^2ds,
\end{align*}
where $\eta^n(s)$ represents the inverse of $\psi^n(s).$

Proposition \ref{discreteT} gives that the first term in the right-hand side
converges to 0. Concerning the second term, $s \mapsto Y_s^{p}$ is continuous
except at the times at which the Poisson process jumps. Consequently, $
Y_{\eta^n(s)}^{p}$ converges to $Y_s^{p}$ for almost every $s$ and as $Y^{p}$
belongs to $\mathcal{S}^2$, we get that
$\mathbb{E}[\int_0^T|Y_{\eta^n(s)}^p-Y_s^p|^2ds] \rightarrow 0$ when
$n\rightarrow \infty$.

 Now, remark that we can rewrite $A_{t}^{p,n}$ and $A_t^p$ as follows:
\begin{align}
A_t^{p,n}=p\int_0^t(Y_s^{p,n}-\overline{\xi}_s^n)^{-}ds \qquad A_t^{p}=p\int_0^t(Y_s^{p}-\xi_s)^{-}ds. 
\end{align}
Then
\begin{align*}
\sup_{t \in [0,T]} |A_{\psi^n(t)}^{p,n}-A^p_t|&=\sup_{t \in [0,T]}
|A_t^{p,n}-A^p_{\eta^n(t)}|\\
&=\sup_{k\in \{0,\cdots,n\}} |A^{p,n}_{t_k}-A^p_{t_k}|+\sup_{k\in
  \{0,\cdots,n\}}\sup_{t\in [t_k,t_{k+1}]}|A^p_{t_k}-A^p_{\eta^n(t)}|.
\end{align*}
since $\xi$ and $Y^p$ belong to $\mathcal{S}^2$, we get that the second term
in the right hand side tends to $0$ in $L^2$ when
$n\rightarrow \infty$.
\begin{align*}
  \sup_{k\in \{0,\cdots,n\}} |A^{p,n}_{t_k}-A^p_{t_k}|&\le 
  p\int_0^{T}|Y_s^{p,n}-Y^p_s| + |\overline{\xi}_s^n-\xi_s|ds.
\end{align*} Since $\lim_{n \rightarrow \infty}\mathbb{E}[\int_0^T
|Y_s^{p,n}-Y_s^p|^2 ds]=0$, $\lim_{n \rightarrow \infty}
\E|\overline{\xi}^n_s-\xi_{\eta(s)}|^2=0$ (see Remark \ref{rem10}) and $\lim_{n \rightarrow
  \infty}\mathbb{E}[\int_0^T|\xi_{\eta^n(s)}-\xi_s|^2ds]=0$ ($\xi$ is RCLL,
its jumps are countable), we get that $\sup_{k\in \{0,\cdots,n\}}
|A^{p,n}_{t_k}-A^p_{t_k}|$ converges to $0$ in $L^2$ in $n$, which ends the proof.

\end{proof}

\subsection{Convergence of the penalized BSDE to the reflected
  BSDE}\label{sect:proof_conv_pen} As said in the Introduction, this part of
the proof deals with the convergence of the penalized BSDE when the jumps are driven by a general Poisson random measure. We state in Proposition \ref{prop1} that a sequence of penalized
BSDEs converges to the solution to \eqref{eq0}. To do so, we give in Section
\ref{sect:int_res} an other proof of existence of solutions to reflected BSDEs
with jumps and RCLL barriers based on the penalization method.
 We
extend the proof of \cite[Section 4]{LX07} to the case of totally inacessible
jumps. We are able to generalize their proof thanks to Mokobodzki's condition
(which in particular enables to get Lemma \ref{exist_sol}, generalizing
\cite[Lemma 4.1]{LX07}), to the comparison Theorem for BSDEs with jumps (see
Theorem \ref{QS13:th4.2} and Theorem \ref{QS14:th5.1}) and to the caracterization of the solution of the
DBBSDE as the value function of a stochastic game (proved in Proposition
\ref{stochasticgame}).

We introduce the penalization scheme, generalizing \eqref{eq_pen_scheme} to
the case of  Poisson random measure :
\begin{align}\label{eq1}
  Y^p_t=&\xi_T+\int_t^T g(s,Y^p_s,Z^p_s,U^p_s)ds + p \int_t^T (Y^p_s-\xi_s)^- ds
  -p\int_t^T (\zeta_s-Y^p_s)^- ds -\int_t^T Z^p_s dW_s \notag \\
  &-\int_t^T\int_{\R^*}U^p_s(e)\tilde{N}(ds,de)
\end{align}
with $A^p_t=p\int_0^t (Y^p_s-\xi_s)^-ds$ and $K^p_t=p\int_0^t
(\zeta_s-Y^p_s)^- ds$.

\begin{proposition}\label{prop1} Under Hypothesis \ref{hypo2}, $Y^p$ converges
  to $Y$ in $\H^2$, $Z^p$ weakly converges in $\H^2$ to $Z$, $U^p$ weakly converges in
  $\H^2_{\nu}$ to $U$, and $\alpha^p_t:=A^p_t-K^p_t$ weakly converges to
  $\alpha_t$ in $L^2(\mF_t)$. Moreover, for all $r \in [1,2[$, the following strong
  convergence holds
  \begin{align}\label{eq25}
    \lim_{p\rightarrow \infty}    \E\left[ \int_0^T |Y^p_s-Y_s|^2 ds\right]+ \E\left[ \int_0^T |Z^p_s-Z_s|^r
      ds + \int_0^T\left( \int_{\R^*} |U^p_s-U_s|^2\nu(de)
      \right)^{\frac{r}{2}}ds \right] =0.
  \end{align}
\end{proposition}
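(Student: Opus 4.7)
The plan is to proceed in four steps: establish uniform a priori estimates via Mokobodski's condition, extract weak limits by compactness, identify the limit as the unique DBBSDE solution, and finally upgrade to strong convergence for $Y^p$ in $\H^2$ and for $Z^p, U^p$ in $L^r$ with $r<2$.

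First, I would derive $L^2$ bounds on $(Y^p, Z^p, U^p, A^p, K^p)$ uniform in $p$. The key tool is Mokobodski's condition: take the nonnegative RCLL supermartingales $H, H'\in \mS^2$ with $\xi \le H - H' \le \zeta$ and their Doob--Meyer decompositions $H = M - V$, $H' = M' - V'$. Applying It\^o's formula to $|Y^p_t - (H_t - H'_t)|^2$ and using the crucial sign cancellations $(Y^p - \xi)^-\,(Y^p - (H-H')) \le 0$ on $\{Y^p<\xi\}$ and symmetrically for the upper barrier, the penalizing terms absorb into controllable quantities. Combined with the Lipschitz property of $g$, Young's inequality, and Gronwall's lemma, this yields a uniform bound on $\sup_t \E|Y^p_t|^2 + \E\int_0^T |Z^p_s|^2 ds + \E\int_0^T \|U^p_s\|_\nu^2 ds + \E[(A^p_T)^2 + (K^p_T)^2]$.

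Second, by weak compactness in the Hilbert spaces $\H^2$ and $\H^2_\nu$, I can extract a subsequence along which $Z^p \rightharpoonup Z$, $U^p \rightharpoonup U$, and $\alpha^p_t \rightharpoonup \alpha_t$ in $L^2(\mF_t)$. The uniform bounds also yield $\E\int_0^T((Y^p_s - \xi_s)^-)^2 ds \le C/p^2$ and the analogous bound for the upper barrier, so any $\H^2$-limit $Y$ satisfies $\xi \le Y \le \zeta$ a.s. To get strong $\H^2$ convergence of $Y^p$ I would invoke the generalized monotonic theorem with jumps (extended in the paper's Appendix). The strategy is to sandwich $Y^p$ between $\underline{Y}^p$ (the lower-penalized BSDE without the $K^p$ term) and $\ov{Y}^p$ (the upper-penalized BSDE without the $A^p$ term), both of which are monotone sequences thanks to the comparison theorem available under Assumption \ref{hypo1} (cf.\ Remark \ref{rem11}). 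The monotonic theorem applies to each and, together with the squeezing, forces $Y^p \to Y$ strongly in $\H^2$.

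Third, I would identify the limit as the unique DBBSDE solution. Passing to the limit in \eqref{eq1} — using strong convergence of $Y^p$, weak convergence of $(Z^p, U^p, \alpha^p)$, and the Lipschitz continuity of $g$ — produces a quadruple $(Y, Z, U, \alpha)$ with $\alpha$ of finite variation satisfying the BSDE equation \eqref{eq0}(ii). To obtain the Skorohod minimality conditions \eqref{eq0}(iv) with the decomposition $\alpha = A - K$, $A, K \in \mA^2$, I would combine the Snell envelope characterization of DBBSDEs from \cite{DQS14} with the minimality properties of the penalization approximations (the penalization limit of any \emph{single} obstacle reflected BSDE is known to be minimal). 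Theorem \ref{thm1} then ensures uniqueness, so the entire sequence — not just the extracted subsequence — converges to this unique limit.

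Finally, the strong $L^r$ convergence of $Z^p, U^p$ for $r \in [1, 2)$ follows by a standard interpolation: weak $\H^2$ convergence combined with the uniform $\H^2$ bound gives, after extracting a further subsequence via Burkholder--Davis--Gundy applied to $\int_0^\cdot (Z^p - Z)\,dW$ and $\int_0^\cdot\int (U^p - U)\,d\tilde N$, convergence in measure; Vitali's convergence theorem then upgrades to strong $L^r$ convergence for $r < 2$. The main obstacle throughout is the strong $\H^2$ convergence of $Y^p$ when the driver $g$ genuinely depends on $(Y, Z, U)$: Peng's classical monotonic theorem handles only drivers independent of the solution, so the extension in the appendix — combining Snell envelope arguments, the jump comparison theorem, and Mokobodski's condition — is the principal technical hurdle.
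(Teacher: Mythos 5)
Your overall architecture (uniform a priori estimates, weak compactness, identification of the limit, then strong convergence) parallels the paper's, and your derivation of the uniform bounds via Mokobodski's condition is a workable variant of the paper's Proposition \ref{prop4}. However, the central step --- strong $\H^2$ convergence of $Y^p$ --- contains a genuine gap. You propose to squeeze $Y^p$ between $\underline{Y}^p$ (driver $g+p(y-\xi_s)^-$, upper penalization dropped) and $\ov{Y}^p$ (driver $g-p(\zeta_s-y)^-$, lower penalization dropped). The comparison theorem does give $\ov{Y}^p\le Y^p\le \underline{Y}^p$ and each bounding sequence is monotone, but as $p\to\infty$ the upper bound $\underline{Y}^p$ increases to the solution of the reflected BSDE with the \emph{single} lower obstacle $\xi$, while the lower bound $\ov{Y}^p$ decreases to the solution of the reflected BSDE with the \emph{single} upper obstacle $\zeta$. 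These two limits bracket $Y$ but are in general strictly different from it (and from each other), so the sandwich does not close: you only obtain $Y^{\mathrm{up}}\le\liminf_p Y^p\le\limsup_p Y^p\le Y^{\mathrm{low}}$. The underlying difficulty is that $Y^p$ itself is not monotone in $p$, since the two penalization terms push in opposite directions.

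The paper circumvents this with the doubly indexed penalization $Y^{p,q}$ of \eqref{eq3} (lower parameter $q$, upper parameter $p$), which is monotone in each index separately. The correct bounding objects are $Y^{p,\infty}=\lim_{q\to\infty}Y^{p,q}$, the reflected BSDE with lower obstacle $\xi$ whose driver \emph{retains} the term $-p(\zeta_s-y)^-$ (equation \eqref{eq17}), and symmetrically $Y^{\infty,p}$ from \eqref{eq16}. Using $Y^{p,\infty}\ge\xi$ and $Y^{\infty,p}\le\zeta$ one checks $Y^{\infty,p}_t\le Y^p_t\le Y^{p,\infty}_t$, and the two-parameter machinery (Lemma \ref{lem3} showing the iterated limits coincide, together with the identification of the common limit as the DBBSDE solution via the stochastic-game characterization of Proposition \ref{stochasticgame}) is exactly what makes \emph{both} bounds converge to $Y$, yielding \eqref{eq25} for the $Y$-part via \eqref{eq24} and dominated convergence. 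Without keeping the opposite penalization inside the drivers of the bounding reflected BSDEs, your squeeze cannot produce $Y^p\to Y$. The remaining steps of your plan (weak limits of $Z^p$, $U^p$, $\alpha^p$, and the $L^r$, $r<2$, upgrade by applying It\^o's formula to $|Y^p-Y|^2$ between stopping times and controlling the jump terms of $A$ and $K$) are consistent with the paper once the strong convergence of $Y^p$ is secured by the correct sandwich; note in particular that full $\H^2$ convergence of $(Z^p,U^p)$ is obstructed precisely by those jumps, which your interpolation/Vitali sketch does not address.
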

The proof of Proposition \ref{prop1} is postponed to Section \ref{sect:proof_prop1}.

%
%

\subsubsection{Intermediate result}\label{sect:int_res}  

For each $p,q$ in $\N$, since the driver
$g(s,y,z,u)+q(y-\xi_s)^--p(\zeta_s-y)^-$ is Lipschitz in $(y,z,u)$, the
following classical BSDE with jumps admits a unique solution
$(Y^{p,q},Z^{p,q},U^{p,q})$ (see \cite{TL94})

\begin{align}\label{eq3}
  Y^{p,q}_t=&\xi_T+\int_t^T g(s,Y^{p,q}_s,Z^{p,q}_s,U^{p,q}_s)ds + q \int_t^T (Y^{p,q}_s-\xi_s)^- ds
  -p\int_t^T (\zeta_s-Y^{p,q}_s)^- ds -\int_t^T Z^{p,q}_s dW_s \notag \\
  &-\int_t^T\int_{\R^*}U^{p,q}_s(e)\tilde{N}(ds,de).
\end{align}
We set $A^{p,q}_t=q\int_0^t (Y^{p,q}_s-\xi_s)^-ds$ and $K^{p,q}_t=p\int_0^t
(\zeta_s-Y^{p,q}_s)^- ds$.

\begin{theorem}\label{thm4}Let us assume that Assumption \ref{hypo2} holds. 
The quadruple $(Y^{p,q},Z^{p,q},U^{p,q},\alpha^{p,q})$, where $\alpha^{p,q}=A^{p,q}-K^{p,q}$, converges to
$(Y,Z,U,\alpha)$, the solution of \eqref{eq0}, as $p\rightarrow \infty$ then $q
\rightarrow \infty$ (or equivalently as $q\rightarrow \infty$ then $p
\rightarrow \infty$) in the following sense : $Y^{p,q}$ converges to $Y$ in
$\H^2$, $Z^{p,q}$ weakly converges to $Z$ in $\H^2$, $U^{p,q}$ weakly
converges to $U$ in $\H^2_{\nu}$, $\alpha^{p,q}_t$ weakly converges to $\alpha_t$ in
$L^2(\mF_t)$. Moreover, for each $r \in [1,2[$, the following strong
convergence holds
\begin{align}\label{eq22}
  \lim_{p\rightarrow \infty}\lim_{q\rightarrow \infty} \E \left( \int_0^T
    |Y^{p,q}_s-Y_s|^2 ds \right) + \E \left( \int_0^T
    |Z^{p,q}_s-Z_s|^r ds + \int_0^T\left( \int_{\R^*} |U^{p,q}_s-U_s|^2 \nu(de)\right)^{\frac{r}{2}}ds \right)=0.
\end{align}
\end{theorem}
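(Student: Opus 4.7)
The plan is to run a double monotone convergence argument extending the one-barrier penalization of \cite{LX07} to RCLL barriers with possibly totally inaccessible jumps. The three key ingredients are the jump comparison theorem under Assumption \ref{hypo1}, Mokobodski's condition, and the generalized monotonic convergence theorem for BSDEs with jumps established in the Appendix.

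First (monotonicity and uniform estimates). The perturbed driver $\tilde g_{p,q}(s,y,z,u):=g(s,y,z,u)+q(y-\xi_s)^- - p(\zeta_s-y)^-$ still satisfies Assumption \ref{hypo1} since the penalties depend only on $y$, and it is non-decreasing in $q$ and non-increasing in $p$; the comparison theorem \cite[Theorem 4.2]{QS13} then gives that $q\mapsto Y^{p,q}$ is non-decreasing and $p\mapsto Y^{p,q}$ is non-increasing. Using $\xi\le H-H'\le\zeta$ with $H,H'\in\mS^2$ the nonnegative supermartingales from Mokobodski's condition, I apply It\^o's formula to $(Y^{p,q}-(H-H'))^2$; the cross terms against $dA^{p,q}$ and $dK^{p,q}$ are controlled by $(Y^{p,q}-\xi)^+$ and $(\zeta-Y^{p,q})^+$, which vanish on the support of the respective penalty, while the jumps of $H,H'$ are handled by a Young inequality, yielding
\begin{align*}
\sup_{p,q}\Bigl(\|Y^{p,q}\|_{\mS^2}+\|Z^{p,q}\|_{\H^2}+\|U^{p,q}\|_{\H^2_\nu}+\|A^{p,q}_T\|_{L^2(\mF_T)}+\|K^{p,q}_T\|_{L^2(\mF_T)}\Bigr)<\infty.
\end{align*}
This is the jump analogue of \cite[Lemma 4.1]{LX07}.

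Second (iterated limits). Fix $p$ and let $q\to\infty$. Monotonicity plus the uniform bound gives an a.s.\ limit $\bar Y^p\in\mS^2$, and the standard penalization estimate $\E\int_0^T[(Y^{p,q}-\xi)^-]^2 ds\le C/q$ forces $\bar Y^p\ge\xi$. The generalized monotonic theorem for BSDEs with jumps then produces weak $\H^2,\H^2_\nu,L^2(\mF_t)$ limits $(\bar Z^p,\bar U^p,\bar\alpha^p=\bar A^p-\bar K^p)$ and identifies the quintuple as the unique solution of the singly reflected BSDE with driver $g-p(\zeta-\cdot)^-$, lower barrier $\xi$, and Skorokhod minimality for $\bar A^p$; the minimality at the totally inaccessible jumps of $\xi$ is supplied by the Snell-envelope representation of $\bar Y^p$ as in \cite{HO13}. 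Letting now $p\to\infty$, the Mokobodski bound survives in the limit, and a second application of the monotonic theorem delivers a quadruple $(Y,Z,U,\alpha=A-K)$ satisfying (i)--(iii) of \eqref{eq0}. Uniqueness (Theorem \ref{thm1}) identifies it with the DBBSDE solution, and the symmetry of the first step shows that the reverse order of limits yields the same quadruple.

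The main obstacle is checking condition (iv) of \eqref{eq0} at totally inaccessible jumps: the penalty densities $q(Y^{p,q}-\xi)^-$ and $p(\zeta-Y^{p,q})^-$ carry no information about the type of the jump times, so the relation $\Delta A^d_\tau=\Delta A^d_\tau\ind_{Y_{\tau^-}=\xi_{\tau^-}}$ (and its dual for $K^d$) is not preserved by a naive pass to the limit. I would recover it by combining the Dynkin-game saddle-point characterization of the DBBSDE value (Proposition \ref{stochasticgame}) with the Snell-envelope minimality extracted at each stage of the iterated limits; this is precisely where the RCLL extension of \cite{LX07} must go beyond the continuous case, and it is the reason the jump-adapted monotonic theorem of the Appendix and the full strength of Mokobodski's condition are both needed. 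Finally, the strong $L^r$-convergence for $r\in[1,2)$ of $Z^{p,q}$ and $U^{p,q}$ follows from their weak $\H^2,\H^2_\nu$ convergence and the uniform $L^2$-bound of Step~1 via a standard Lenglart/Burkholder-Davis-Gundy interpolation argument as in \cite{HH06}.
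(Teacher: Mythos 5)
Your outline follows the paper's architecture (double monotone limits via the comparison theorem, uniform estimates from Mokobodski's condition, the generalized monotonic theorem, and a Dynkin-game/Snell-envelope identification of the limit), but three steps as you describe them would not go through. First, the uniform bound: applying It\^o's formula to $(Y^{p,q}-(H-H'))^2$ gives the correct sign for the cross terms but only controls the difference $A^{p,q}-K^{p,q}$ through the equation; it does not yield the separate bounds $\E[(A^{p,q}_T)^2]+\E[(K^{p,q}_T)^2]\le C$ that the monotonic limit theorem requires, and separating the two increasing processes is the crux of two-barrier a priori estimates. The paper gets these by first producing, via Lemma \ref{exist_sol} and the Doob--Meyer decomposition of $H,H'$, a process $Y^*$ solving (i)--(iii) of \eqref{eq0}, then sandwiching $\tilde Y^{p,q}\le Y^{p,q}\le \overline Y^{p,q}$ between solutions of two auxiliary one-sided penalized equations driven by $A^*$ and $K^*$, so that $A^{p,q}\le\tilde A^{p,q}$ and $K^{p,q}\le\overline K^{p,q}$ are dominated by penalty processes of one-barrier equations to which the estimate of \cite{Ess08} applies. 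Second, your identification of the limit is logically circular: Proposition \ref{stochasticgame} characterizes the game value in terms of an already-given solution of \eqref{eq0}, so it cannot be invoked on the limit $Y$ before (iv) is known. The paper instead applies it at the pre-limit level, by viewing $(Y^{p,\infty},Z^{p,\infty},U^{p,\infty},\alpha^{p,\infty})$ as a solution of a DBBSDE with barriers $\xi$ and $\zeta+(\zeta-Y^{p,\infty})^-$ (dually for $Y^{\infty,q}$), passes to the limit in the resulting upper and lower value inequalities, and --- crucially using Lemma \ref{lem3} that the two iterated limits coincide, which is not a mere ``symmetry'' but a genuine comparison argument on the increments of $A^{p,q}$ and $K^{p,q}$ --- concludes that $Y-M$ is the value of a Dynkin game; conditions (iii) and (iv) are then read off the coupled Snell-envelope system $X^+=\mR(X^-+\tilde\xi)$, $X^-=\mR(X^+-\tilde\zeta)$ of \cite{LX07} and its Doob--Meyer decomposition.

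Third, your last claim is false as stated: weak convergence in $\H^2$ together with a uniform $L^2$ bound never upgrades to strong $L^r$ convergence for $r\in[1,2)$ (an orthonormal sequence in $\H^2$ converges weakly to $0$ and strongly to nothing). The strong convergence \eqref{eq22} is obtained, in \cite[Theorem 4.2]{Ess08} for the inner limit \eqref{eq23} and in Theorem \ref{thm2} for the outer limit \eqref{eq24}, from It\^o's formula applied to the squared difference of the $Y$'s on stochastic subintervals chosen to avoid the finitely many large predictable jumps of the limiting processes $A$ and $K$; the restriction $r<2$ is exactly the price of those jumps. You should replace the ``interpolation'' step by this argument.
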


The proof of Theorem \ref{thm4} is divided in several steps. We prove
\begin{enumerate}
\item the quadruple $(Y^{p,q},Z^{p,q},U^{p,q},\alpha^{p,q})$
  converges as $q\rightarrow \infty$ then $p
  \rightarrow \infty$
\item the quadruple $(Y^{p,q},Z^{p,q},U^{p,q},\alpha^{p,q})$
  converges as $p\rightarrow \infty$ then $q
  \rightarrow \infty$
\item the two limits are equal (see Lemma \ref{lem3})
\item the limit of the penalized BSDE is the solution of the reflected BSDE
  \eqref{eq0} (see Theorem \ref{thm5})
\item Equation \eqref{eq22} ensues from \eqref{eq23} and
\eqref{eq24}.\\
\end{enumerate}

{\bf Proof of point 1.}\\
Let us first state the following preliminary result.
\begin{lemma}\label{exist_sol}
Suppose that  $H, H^{\prime}\in {\cal S}^2$  are two supermartingales such that Assumption $\ref{hypo2}$ holds.
Let $ Y^*$ be the RCLL adapted process defined by  $Y^*_t:=(H_t-H'_t)\ind_{t< T}+\xi_T\ind_{t=T}$.
There exists $(Z^*, U^*,A^*, K^*) \in \mathbb{H}^2 \times \mathbb{H}^2_{\nu} \times \mathcal{A}^2 \times \mathcal{A}^2$ such that $(Y^*, Z^*, U^*,A^*, K^*)$    solves $(i), (ii), (iii)$ of  \eqref{eq0}.
\end{lemma}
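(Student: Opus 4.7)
The plan is to invoke Mokobodski's condition and construct the required tuple by combining the Doob--Meyer decomposition of the two supermartingales with the martingale representation theorem on the filtration generated by $(W,N)$.

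First, I would apply the Mokobodski condition to obtain two non-negative RCLL supermartingales $H,H'\in\mathcal{S}^2$ with $\xi_t\mathbf{1}_{t<T}\le H_t-H'_t\le\zeta_t\mathbf{1}_{t<T}$. Using the Doob--Meyer decomposition in the $L^2$ setting, I would write
$$H_t = H_0 + M^H_t - A^H_t, \qquad H'_t = H'_0 + M^{H'}_t - K^{H'}_t,$$
with $M^H,M^{H'}$ square-integrable c\`adl\`ag martingales and $A^H,K^{H'}\in\mathcal{A}^2$ predictable non-decreasing RCLL processes null at $0$ (the $L^2$-bound on the increasing parts is the standard consequence of $H,H'\in\mathcal{S}^2$). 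The martingale representation theorem then yields $Z^*\in\mathbb{H}^2$ and $U^*\in\mathbb{H}^2_{\nu}$ such that
$$M^H_t - M^{H'}_t = \int_0^t Z^*_s\,dW_s + \int_0^t\!\!\int_{\mathbb{R}^*} U^*_s(e)\,\tilde N(ds,de).$$

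Second, I would absorb the driver $g$ and the terminal discrepancy between $H_T-H'_T$ and $\xi_T$ by defining
$$A^*_t := A^H_t + \int_0^t g^-(s,Y^*_s,Z^*_s,U^*_s)\,ds + (H_T-H'_T-\xi_T)^+\,\mathbf{1}_{\{t=T\}},$$
$$K^*_t := K^{H'}_t + \int_0^t g^+(s,Y^*_s,Z^*_s,U^*_s)\,ds + (H_T-H'_T-\xi_T)^-\,\mathbf{1}_{\{t=T\}}.$$
Both processes are RCLL, non-decreasing, null at $0$ and in $\mathcal{A}^2$: the $L^2$-estimate follows from $A^H,K^{H'}\in\mathcal{A}^2$, Lipschitz growth of $g$ combined with $(Y^*,Z^*,U^*)\in\mathcal{S}^2\times\mathbb{H}^2\times\mathbb{H}^2_{\nu}$, and the square-integrability of $H_T,H'_T,\xi_T$. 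Predictability of $A^H,K^{H'}$ comes from Doob--Meyer; the integral parts are continuous and hence predictable; the boundary adjustments live at the deterministic (hence predictable) time $T$, and quasi-left-continuity of the Brownian--Poisson filtration makes $H_T,H'_T,\xi_T$ almost surely $\mathcal{F}_{T^-}$-measurable, so the jump sizes are predictable.

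Property (iii) is immediate from the Mokobodski sandwich on $[0,T)$ and $\xi_T=\zeta_T$. Property (i) holds by construction. For (ii), on $[0,T)$ the identities $dY^*_t=-dA^H_t+dK^{H'}_t+Z^*_t\,dW_t+\int U^*_t(e)\tilde N(dt,de)$ (from Doob--Meyer and martingale representation) and $dA^*-dK^*=dA^H-dK^{H'}-g\,dt$ (by construction, since $g=g^+-g^-$) combine to give the desired dynamics $dY^*_t = -g(t,Y^*_t,Z^*_t,U^*_t)\,dt - dA^*_t + dK^*_t + Z^*_t\,dW_t + \int U^*_t(e)\tilde N(dt,de)$. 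At $t=T$, the choice of $\delta^A_T,\delta^K_T$ yields $\Delta K^*_T-\Delta A^*_T=\xi_T-(H_{T^-}-H'_{T^-})=\Delta Y^*_T$ (using $\Delta M^H_T=\Delta M^{H'}_T=0$ a.s.\ by quasi-left-continuity), so the BSDE dynamics extend through $T$; integrating from $t$ to $T$ recovers (ii).

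The main technical subtlety is the terminal adjustment: the naive candidate $t\mapsto H_t-H'_t$ need not match $\xi_T$ at $t=T$, so $A^*,K^*$ must carry predictable jumps of precisely the right sign and magnitude at $T$. The delicate step is verifying that these modified processes remain predictable elements of $\mathcal{A}^2$; this relies crucially on quasi-left-continuity of the Brownian--Poisson filtration, which both prevents the martingale parts of $H,H'$ from jumping at the deterministic time $T$ and ensures that the boundary-jump sizes are $\mathcal{F}_{T^-}$-measurable modulo null sets.
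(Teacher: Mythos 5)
Your proof is correct and follows essentially the same route as the paper: Doob--Meyer decomposition of the Mokobodski supermartingales $H,H'$, martingale representation of the resulting martingale part $M-M'$ in the Brownian--Poisson filtration, and absorption of the driver into the increasing processes via $g^{+}$ and $g^{-}$. You are in fact somewhat more careful than the paper at the terminal time, where you add explicit predictable jumps to $A^{*},K^{*}$ (using quasi-left-continuity to rule out a martingale jump at $T$) to reconcile $H_T-H'_T$ with $\xi_T$ --- a point the paper's proof passes over silently.
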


\begin{proof}
By assumption, $H$ and $H'$  are square integrable supermartingales. 
The process $ Y^*$ is thus well defined. By the Doob-Meyer decomposition of supermartingales, there exist two square integrable martingales $M$ and $M'$, two square integrable nondecreasing predictable RCLL processes $V$ and $V^{'}$ with $V_0=V_0^{'}=0$ such that:
\begin{equation}\label{1}
dH_t=dM_t-dV_t \quad ; \quad dH_t^{'}=dM_t^{'}-dV_t^{'}.
\end{equation}
Define 
$$\overline{M}_t :=M_t-M_t^{'}.$$ 
 By the above relation and \eqref{1},    we derive $dY^*_t=
d\overline{M}_t-dV_t+dV_t^{'} $. Now, by the martingale representation theorem, there exist $Z^* \in \mathbb{H}^{2}, U^* \in \mathbb{H}_{\nu}^{2}$ such that:
\begin{equation}\label{3}
d\overline{M}_t=Z^*_tdW_t+\int_{\mathbb{R}^{*}}U^*_t(e)\Tilde{N}(de,dt).
\end{equation}
Consequently, \eqref{1} and \eqref{3} imply that:
\begin{align*}
  Y^*_t=&\xi_T+\int_t^Tg(s,Y_s^*,Z_s^*,U_s^*)ds-\left(\int_t^Tg(s,Y_s^*,Z_s^*,U_s^*)ds+(V_T-V_t)-(V'_T-V'_t)\right)-\int_t^TZ^*_sdW_s\\
  &-\int_t^T\int_{\mathbb{R}^{*}}U^*_s(e)\tilde{N}(ds,de).
\end{align*}
Now let $g^+$ (resp. $g^-$) denote the positive (resp. negative) part of the
function $g$. By setting $A_t^*:=V_t+\int_0^tg^+(s,Y_s^*,Z_s^*,U_s^*)ds$ and $K_t^*:=V'_t+\int_0^tg^-(s,Y_s^*,Z_s^*,U_s^*)ds$, the result follows.
\end{proof}

\begin{proposition}\label{prop4}
   Suppose Assumption \ref{hypo2} 
  holds. Then, there exists a constant $C$, independent of $p$ and $q$ such
  that we have :
  \begin{align}\label{eq12}
    \E\left[\sup_{0 \le t \le T} (Y^{p,q}_t)^2\right] + \E\left[\int_0^T
      |Z^{p,q}_t|^2 dt\right] +\E\left[\int_0^T
      \int_{\R^*}|U^{p,q}_t(e)|^2\nu(de) dt\right] + \E[(A^{p,q}_T)^2]+
    \E[(K^{p,q}_T)^2] \le C.
  \end{align}
\end{proposition}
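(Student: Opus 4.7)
The strategy is the classical \emph{Mokobodski reference} argument: construct a process lying between $\xi$ and $\zeta$, then apply It\^o's formula to the squared distance from $Y^{p,q}$ to that reference. First, from Mokobodski's condition and Lemma \ref{exist_sol}, I obtain a quadruple $(Y^*, Z^*, U^*, A^*, K^*)$ with $Y^* \in \mS^2$, $Z^* \in \H^2$, $U^* \in \H^2_\nu$, $A^*, K^* \in \mA^2$, satisfying $\xi_s \le Y^*_s \le \zeta_s$ for every $s$ and solving a BSDE with driver $g$ and finite-variation process $A^*-K^*$.

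Apply It\^o's formula to $|Y^{p,q}_s - Y^*_s|^2$ on $[t,T]$, using $Y^{p,q}_T = Y^*_T = \xi_T$. The crucial observation is that the penalty terms carry favorable signs: since $\xi \le Y^* \le \zeta$,
\[
(Y^{p,q}_s - Y^*_s)(Y^{p,q}_s - \xi_s)^- \le 0, \qquad (Y^{p,q}_s - Y^*_s)(\zeta_s - Y^{p,q}_s)^- \ge 0,
\]
so both $q$- and $p$-penalty contributions move to the left-hand side with non-negative sign. A sharper decomposition, writing $Y^{p,q}_s - Y^*_s = -(Y^{p,q}_s - \xi_s)^- - (Y^*_s - \xi_s)$ on $\{Y^{p,q}_s < \xi_s\}$ and dually on $\{Y^{p,q}_s > \zeta_s\}$, produces on the left the quadratic penalties $2q\,\E\!\int[(Y^{p,q}-\xi)^-]^2 ds$ and $2p\,\E\!\int[(\zeta-Y^{p,q})^-]^2 ds$ together with the ``cross'' terms $2\E\!\int (Y^*-\xi)\,dA^{p,q}$ and $2\E\!\int(\zeta-Y^*)\,dK^{p,q}$. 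I then control the right-hand side by combining the Lipschitz property of $g$ with Young's inequality to absorb small constants times $|Z^{p,q}-Z^*|^2$ and $\|U^{p,q}-U^*\|_\nu^2$, and by using Cauchy--Schwarz together with $A^*, K^* \in \mA^2$ to dominate $\E\!\int |Y^{p,q}_{s^-} - Y^*_{s^-}|(dA^*+dK^*)$ at the cost of an $\varepsilon\,\E[\sup_t (Y^{p,q}_t - Y^*_t)^2]$ term. Taking the supremum in $t$ before taking expectations, applying Burkholder--Davis--Gundy on the martingale part to absorb that supremum, and invoking Gronwall's lemma, I obtain uniform bounds on $\E[\sup_t (Y^{p,q}_t)^2] + \E\!\int |Z^{p,q}|^2 ds + \E\!\int\!\int |U^{p,q}|^2 \nu(de)\,ds$.

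The delicate part is deducing uniform $L^2$-bounds on $A^{p,q}_T$ and $K^{p,q}_T$ \emph{individually}. Writing the BSDE at $t=0$ gives $A^{p,q}_T - K^{p,q}_T = Y^{p,q}_0 - \xi_T - \int_0^T g(s,Y^{p,q}_s,Z^{p,q}_s,U^{p,q}_s)\,ds + M_T$ for a square-integrable martingale $M$, hence $\E[(A^{p,q}_T - K^{p,q}_T)^2] \le C$ from the bounds just obtained. To decouple the two processes I would combine the observation that $dA^{p,q}$ and $dK^{p,q}$ have disjoint supports (since $\xi\le\zeta$ makes $\{Y^{p,q}<\xi\}$ and $\{Y^{p,q}>\zeta\}$ disjoint) with the cross-term controls $\E\!\int (Y^*-\xi)\,dA^{p,q} \le C$ and $\E\!\int (\zeta-Y^*)\,dK^{p,q} \le C$ extracted in the refined It\^o identity, complemented by a comparison argument with one-barrier penalized BSDEs (whose reflection processes admit uniform $L^2$-bounds by classical single-barrier a priori estimates). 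This is the main obstacle: the naive Cauchy--Schwarz estimate $\E\!\int [(Y^{p,q}-\xi)^-]^2 ds \le C/q$ only yields $\E[(A^{p,q}_T)^2] = O(q)$, so one must genuinely exploit the double-reflection structure rather than work with the penalization alone.
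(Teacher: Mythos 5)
Your first half is correct and is a genuinely different route from the paper's. Where you apply It\^o's formula to $|Y^{p,q}-Y^*|^2$ and exploit the signs $(Y^{p,q}_s-Y^*_s)\,dA^{p,q}_s\le 0$, $(Y^{p,q}_s-Y^*_s)\,dK^{p,q}_s\ge 0$ (valid because $\xi\le Y^*\le\zeta$), the paper instead sandwiches $Y^{p,q}$ by pure comparison: it builds auxiliary penalized BSDEs $\overline{Y}^{p,q}$ (forced by $+dA^*$) and $\tilde{Y}^{p,q}$ (forced by $-dK^*$), shows $Y^-\le\tilde{Y}^{p,q}\le Y^{p,q}\le\overline{Y}^{p,q}\le Y^+$ for two $(p,q)$-independent BSDEs $Y^\pm$, and only afterwards runs It\^o on $|Y^{p,q}|^2$ to get the $Z,U$ bounds --- a step which in the paper \emph{uses} the already-established bounds on $\E[(A^{p,q}_T)^2]$ and $\E[(K^{p,q}_T)^2]$ (see the right-hand side of \eqref{eq30}). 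Your version avoids that circularity for the $Y,Z,U$ estimates, which is a legitimate advantage.

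The gap is in the decoupling of $\E[(A^{p,q}_T)^2]$ and $\E[(K^{p,q}_T)^2]$, which you rightly call the main obstacle but do not actually close. Two of your three proposed mechanisms fail: (i) disjoint supports of $dA^{p,q}$ and $dK^{p,q}$ together with $\E[(A^{p,q}_T-K^{p,q}_T)^2]\le C$ give nothing, since the total masses $A^{p,q}_T$ and $K^{p,q}_T$ can both blow up while their difference stays bounded, disjointness of the supporting time sets notwithstanding; (ii) the cross terms $\E\int_0^T(Y^*_s-\xi_s)\,dA^{p,q}_s\le C$ control a weighted \emph{first} moment, with a weight that may vanish, so they cannot yield a second-moment bound on $A^{p,q}_T$. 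The only mechanism that works is your third one, and it requires the specific construction the paper uses: from Mokobodski one gets the forcings $A^*,K^*$, and the comparison theorem shows $\tilde{Y}^{p,q}\le Y^*\le\zeta$ (resp. $\overline{Y}^{p,q}\ge Y^*\ge\xi$), so the $p$-penalty (resp. $q$-penalty) vanishes \emph{identically} and $\tilde{Y}^{p,q}$ (resp. $\overline{Y}^{p,q}$) solves a genuine one-lower-barrier (resp. one-upper-barrier) penalized equation, to which the single-barrier a priori estimate \cite[Eq. (17)]{Ess08} applies uniformly in the remaining parameter; then monotonicity of $x\mapsto(\xi_s-x)^+$ and the ordering $\tilde{Y}^{p,q}\le Y^{p,q}$ give $A^{p,q}_T\le\tilde{A}^{p,q}_T$, and symmetrically $K^{p,q}_T\le\overline{K}^{p,q}_T$. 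Without the vanishing of one penalty in each auxiliary problem, the ``classical single-barrier estimates'' you invoke are not uniform in both $p$ and $q$, so this construction is not an optional complement but the load-bearing part of the argument.
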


\begin{proof}[Proof] This proof generalizes the
  proof of \cite[Proposition 4.1]{LX07} to the case of jumps. Since $p$ and $q$ play
  symmetric roles, the calculations over $p$ and $q$ are uniform throughout
  this proof.  
  From Lemma \ref{exist_sol}, we know that there exists $(Y^*,Z^*,U^*,A^*,K^*)$ in
  $\mS^2\times \H^2\times \H^2_{\nu} \times \mA^2 \times \mA^2$ such that
  \begin{align*}
    Y^*_t=\xi_T+\int_t^T g(s,\theta^*_s)ds+(A^*_T-A^*_t)-(K^*_T-K^*_t)-\int_t^T Z^*_s
    dW_s-\int_t^T \int_{\R^*} U^*_s(e)\tilde{N}(ds,de)
  \end{align*}
  and $\xi_t \le Y^*_t \le \zeta_t$ $dP \otimes dt$ a.s. ($\theta^*_s$ denotes $(Y^*_s,Z^*_s,U^*_s)$). Then, for $p,q \in
  \N$, we also have
  \begin{align*}
    Y^*_t=&\xi_T+\int_t^T g(s,\theta^*_s)ds+(A^*_T-A^*_t)-(K^*_T-K^*_t)
    +q\int_t^T (\xi_s-Y^*_s)^+ ds -p \int_t^T (Y^*_s-\zeta_s)^+ ds \\
    &-\int_t^T Z^*_s
    dW_s-\int_t^T \int_{\R^*} U^*_s(e)\tilde{N}(ds,de).
  \end{align*}

  Let $\overline{\theta}^{p,q}:=(\overline{Y}^{p,q}, \overline{Z}^{p,q},\overline{U}^{p,q})$ and $\tilde{\theta}^{p,q}=(\tilde{Y}^{p,q},
  \tilde{Z}^{p,q},\tilde{U}^{p,q})$ be the solutions of the following equations

  \begin{align}\label{eq41}
    \overline{Y}^{p,q}_t=&\xi_T+\int_t^T g(s,\overline{\theta}^{p,q}_s)ds+(A^*_T-A^*_t)
    +q\int_t^T (\xi_s-\overline{Y}^{p,q}_s)^+ ds -p \int_t^T (\overline{Y}^{p,q}_s-\zeta_s)^+ ds \\
    &-\int_t^T \overline{Z}^{p,q}_s
    dW_s-\int_t^T \int_{\R^*} \overline{U}^{p,q}_s(e)\tilde{N}(ds,de).
  \end{align}

  \begin{align}\label{eq40}
    \tilde{Y}^{p,q}_t=&\xi_T+\int_t^T g(s,\tilde{\theta}^{p,q}_s)ds-(K^*_T-K^*_t)
    +q\int_t^T (\xi_s-\tilde{Y}^{p,q}_s)^+ ds -p\int_t^T (\tilde{Y}^{p,q}_s-\zeta_s)^+ ds \\
    &-\int_t^T \tilde{Z}^{p,q}_s
    dW_s-\int_t^T \int_{\R^*} \tilde{U}^{p,q}_s(e)\tilde{N}(ds,de).
  \end{align}
By the comparison theorem for BSDEs with jumps (see Theorem \ref{QS13:th4.2}), we
get that for all $p,q$ in $\N$, $\tilde{Y}^{p,q}_t\le Y^{p,q}_t \le
\overline{Y}^{p,q}_t$, $\xi_t \le Y^*_t \le \overline{Y}^{p,q}_t$ and
$\tilde{Y}^{p,q}_t\le Y^*_t \le \zeta_t$.  Applying this result to
\eqref{eq41} gives that $(\overline{Y}^{p,q},
\overline{Z}^{p,q},\overline{U}^{p,q})$ is also solution to

\begin{align}\label{eq7}
    \overline{Y}^{p,q}_t=\xi_T+\int_t^T g(s,\overline{\theta}^{p,q}_s)ds+(A^*_T-A^*_t)
    -p \int_t^T (\overline{Y}^{p,q}_s-\zeta_s)^+ ds-\int_t^T \overline{Z}^{p,q}_s
    dW_s-\int_t^T \int_{\R^*} \overline{U}^{p,q}_s(e)\tilde{N}(ds,de).
  \end{align}
  Doing the same with \eqref{eq40} gives that $(\tilde{Y}^{p,q},
  \tilde{Z}^{p,q},\tilde{U}^{p,q})$ is also solution to
  \begin{align}\label{eq8}
    \tilde{Y}^{p,q}_t=\xi_T+\int_t^T g(s,\tilde{\theta}^{p,q}_s)ds-(K^*_T-K^*_t)
    +q\int_t^T (\xi_s-\tilde{Y}^{p,q}_s)^+ ds-\int_t^T \tilde{Z}^{p,q}_s
    dW_s-\int_t^T \int_{\R^*} \tilde{U}^{p,q}_s(e)\tilde{N}(ds,de).
  \end{align}

  Let us consider the following BSDEs
  \begin{align}
&    Y^+_t=\xi_T+\int_t^T g(s,\theta^+_s)ds +(A^*_T-A^*_t) -\int_t^T Z^+_s dW_s
    -\int_t^T \int_{\R^*} \tilde{U}^{+}_s(e)\tilde{N}(ds,de),\label{eq5}\\
 &   Y^-_t=\xi_T+\int_t^T g(s,\theta^-_s)ds -(K^*_T-K^*_t) -\int_t^T Z^-_s dW_s
    -\int_t^T \int_{\R^*} \tilde{U}^{-}_s(e)\tilde{N}(ds,de)\label{eq6},
  \end{align}
where $\theta^+_s:=(Y^+_s,Z^+_s,U^+_s)$ and $\theta^-_s:=(Y^-_s,Z^-_s,U^-_s)$.
  Since $\overline{K}^{p,q}_t:=p\int_0^t(\overline{Y}^{p,q}_s-\zeta_s)^+ ds$
  and $\tilde{A}^{p,q}_t:=q\int_0^t (\xi_s-\tilde{Y}^{p,q}_s)^+ ds$ are
  increasing processes, Theorem \ref{QS13:th4.2} applied
  to \eqref{eq7} and \eqref{eq5} (resp. to  \eqref{eq8} and \eqref{eq6}) gives
  $\overline{Y}^{p,q}_t \le Y^+_t$ (resp. $Y^-_t \le
  \tilde{Y}^{p,q}_t$). Combining theses results with the inequality  $\tilde{Y}^{p,q}_t\le Y^{p,q}_t \le
\overline{Y}^{p,q}_t$ leads to
  \begin{align}\label{eq10}
 \forall (p,q) \in \N^2,\;\forall t \in [0,T],\;\;   Y^-_t \le \tilde{Y}^{p,q}_t \le Y^{p,q}_t \le \overline{Y}^{p,q}_t \le Y^+_t.
  \end{align}
  Then we have
  \begin{align}\label{eq9}
    \E[\sup_{0 \le t \le T} (Y^{p,q}_t)^2] \le \max\{ \E[\sup_{0 \le t \le T}
    (Y^{+}_t)^2], \E[\sup_{0 \le t \le T}
    (Y^{-}_t)^2 ]\}.
  \end{align}
  Since $A^*$ and $K^*$ belong to $\mA^2$, It{\^o}'s formula, BDG inequality and
  Gronwall's Lemma
  give $\E[\sup_{0 \le t \le T}
    (Y^{+}_t)^2] \le C$ and $\E[\sup_{0 \le t \le T}
    (Y^{-}_t)^2 ] \le C$. Then we get
    \begin{align}\label{eq11}
      \E[\sup_{0 \le t \le T} (Y^{p,q}_t)^2] \le C.
    \end{align} Let us now prove that $\E[(A^{p,q}_T)^2] + \E[(K^{p,q}_T)^2]
    \le C$. Since for all $p,q$ in $\N$, $\tilde{Y}^{p,q}_t \le Y^{p,q}_t \le
    \overline{Y}^{p,q}_t$, then $\tilde{A}^{p,q}_t \ge A^{p,q}_t \ge 0$ and
    $\overline{K}^{p,q}_t \ge K^{p,q}_t \ge 0$ . It boils down to prove
    $\E[(\tilde{A}^{p,q}_T)^2] + \E[(\overline{K}^{p,q}_T)^2]\le C$. Let us
first prove that $\E[(\tilde{A}^{p,q}_T)^2] \le C$. To do so, we apply
\cite[Equation (17)]{Ess08} to \eqref{eq8} (as a sequence in $q$). In the same
way, we apply \cite[Equation (17)]{Ess08} to \eqref{eq7} (as a sequence in $p$). We get
$\E[(\overline{K}^{p,q}_T)^2]\le C$.\\

It remains to prove $ \E\left[\int_0^T
      |Z^{p,q}_t|^2 dt\right] +\E\left[\int_0^T
      \int_{\R^*}|U^{p,q}_t(e)|^2\nu(de) dt\right] \le C$. By applying It{\^o}'s formula to
    $|Y^{p,q}_t|^2$, we get
    \begin{align*}
      &\E\left[|Y^{p,q}_t|^2\right] +\E\left[\int_t^T |Z^{p,q}_s|^2 ds \right] + \E\left[\int_t^T \int_{\R^*}
    |U_s^{p,q}(e)|^2 \nu(de) ds\right] \\
  = & \E[\xi_T^2]+2\E\left[\int_t^TY^{p,q}_s
      g(s,Y^{p,q}_s,Z^{p,q}_s,U^{p,q}_s)ds\right] +2\E \left[\int_t^T
      Y^{p,q}_s q(Y^{p,q}_s-\xi_s)^- ds\right]-2\E \left[\int_t^T
      Y^{p,q}_s p(\zeta_s-Y^{p,q}_s)^- ds\right].
    \end{align*}
    The third term of the right hand side is zero if $Y^{p,q}_s \ge
    \xi_s$. Then we can bound it by $2\E \left[\sup_{0 \le t \le T} |\xi_t|
      (A^{p,q}_T-A^{p,q}_t)\right]$. The last term of the right hand side is
    bounded in the same way. We bound it by $2\E \left[\sup_{0 \le t \le T} |\zeta_t|
      (K^{p,q}_T-K^{p,q}_t)\right]$. By using that $g$ is Lipschitz, we bound the
    second term of the right hand side
    \begin{align*}
      2\E\left[\int_t^TY^{p,q}_s
      g(s,Y^{p,q}_s,Z^{p,q}_s,U^{p,q}_s)ds\right]\le 2\E\left[ \int_t^T|Y^{p,q}_s|(\|g(\cdot,0,0,0)\|_{\infty}+C_g(|Y^{p,q}_s|+|Z^{p,q}_s|+|U^{p,q}_s|))ds\right].
    \end{align*}
By applying Young's inequality, we get
\begin{align}\label{eq30}
  &\E\left[|Y^{p,q}_t|^2\right] +\E\left[\int_t^T |Z^{p,q}_s|^2 ds \right] + \E\left[\int_t^T \int_{\R^*}
    |U_s^{p,q}(e)|^2 \nu(de) ds\right] \\
  \le &\|g(\cdot,0,0,0)\|_{\infty}^2+(1+2C_g+4C_g^2)\E\left[\int_t^T |Y^{p,q}_s|^2 ds\right] +\frac{1}{2} \E\left[\int_t^T |Z^{p,q}_s|^2 ds\right] +\frac{1}{2} \E\left[\int_t^T \int_{\R^*}
    |U^{p,q}_s(e)|^2\nu(de) ds\right] \notag \\
&+  \E[\sup_{0 \le t \le T} \xi_t^2]+\E[\sup_{0
    \le t \le T} \zeta_t^2] + \E[(A^{p,q}_T)^2]+\E[(K^{p,q}_T)^2].\notag
\end{align} By combining the assumptions on $\xi$, $\zeta$, \eqref{eq11} and
the previous result bounding $\E[(A^{p,q}_T)^2]+ E[(K^{p,q}_T)^2]$, we get
$\E[\int_t^T |Z^{p,q}_s|^2 ds ] + \E[\int_t^T \int_{\R^*} |U_s^{p,q}(e)|^2
\nu(de) ds] \le C$.
\end{proof}

In \eqref{eq3}, for fixed $p$ we set
$g_p(s,y,z,u)=g(s,y,z,u)-p(\zeta_s-y)^-$. $g_p$ is Lipschitz and
\begin{align*}
  \E\left(\int_0^T (g_p(s,0,0,0))^2 ds \right) \le 2  \E\left(\int_0^T
    (g(s,0,0,0))^2 ds \right) + 2p^2 T \E (\sup_{0 \le t \le T} (\zeta_t)^2) < \infty.
\end{align*}
By Theorem \ref{QS13:th4.2}, we know that $(Y^{p,q})$ is
increasing in $q$ for all $p$. Thanks to Theorem \ref{Ess08:th4.2}, we
know that $(Y^{p,q},Z^{p,q},U^{p,q})_{q \in \N}$ has a limit
$(Y^{p,\infty},Z^{p,\infty},U^{p,\infty}):=\theta^{p,\infty}$  such that $(Y^{p,q})_q$ converges
increasingly to $Y^{p,\infty} \in \mS^2$, and thanks to Theorem \ref{Ess08:th3.1}, we know that there exists
$Z^{p,\infty} \in \H^2$, $U^{p,\infty} \in \H^2_{\nu}$ and $A^{p,\infty} \in
\mA^2$ such that
$(Y^{p,\infty},Z^{p,\infty},U^{p,\infty},A^{p,\infty})$
satisfies the following equation
\begin{align}\label{eq17}
  Y^{p,\infty}_t=&\xi_T+\int_t^T
  g(s,\theta^{p,\infty}_s)ds+(A^{p,\infty}_T-A^{p,\infty}_t)-p\int_t^T
  (\zeta_s-Y^{p,\infty}_s)^- ds -\int_t^T Z^{p,\infty}_s dW_s \notag\\
  &-\int_t^T
  \int_{\R^*} U^{p,\infty}_s(e) \tilde{N}(ds,de)
\end{align}

$Z^{p,\infty}$ is the weak limit of $(Z^{p,q})_q$ in $\H^2$, $U^{p,\infty}$ is
the weak limit of $(U^{p,q})_q$ in $\H^2_{\nu} $ and $A^{p,\infty}_t$ is the weak
limit of $(A^{p,q}_t)_q$ in $L^2(\mF_t)$. Moreover, for each $r \in [1,2[$,
the following strong convergence holds
\begin{align}\label{eq23}
  \lim_{q\rightarrow \infty} \E \left( \int_0^T
    |Y^{p,q}_s-Y^{p,\infty}_s|^2 ds \right) + \E \left( \int_0^T
    |Z^{p,q}_s-Z^{p,\infty}_s|^r ds + \int_0^T\left( \int_{\R^*} |U^{p,q}_s-U^{p,\infty}_s|^2 \nu(de)\right)^{\frac{r}{2}}ds \right)=0.
\end{align}

 From \cite[Theorem 5.1]{Ess08}, we
also get that $\forall t \in [0,T]$, $Y^{p,\infty}_t \ge \xi_t$ and $\int_0^T
(Y^{p,\infty}_{t^-}-\xi_{t^-}) dA^{p,\infty}_t=0$ a.s. Set
$K^{p,\infty}_t=p\int_0^t (\zeta_s-Y^{p,\infty}_s)^- ds$. Since $Y^{p,q}
\nearrow Y^{p,\infty}$ when $q \rightarrow \infty$, $K^{p,q}\nearrow
K^{p,\infty}$ when $q \rightarrow \infty$. By the monotone convergence theorem
and \eqref{eq12}, we get that $\E((K^{p,\infty}_T)^2) \le C$. Then we get the
following Lemma.
\begin{lemma}\label{lem1}
  There exists a constant $C$ independent of $p$ such that
  \begin{align*}
    \E\left[\sup_{0 \le t \le T} (Y^{p,\infty}_t)^2\right] + \E\left[\int_0^T
      |Z^{p,\infty}_t|^2 dt\right] +\E\left[\int_0^T
      \int_{\R^*}|U^{p,\infty}_t(e)|^2\nu(de) dt\right] + \E[(A^{p,\infty}_T)^2]+
    \E[(K^{p,\infty}_T)^2] \le C.
  \end{align*}
\end{lemma}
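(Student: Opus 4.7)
The plan is to pass to the limit $q \to \infty$ in the uniform (in $p$ and $q$) bounds of Proposition \ref{prop4}, using the three modes of convergence already identified just before the statement of the Lemma: monotone, weak, and strong.

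For the $\mS^2$-estimate on $Y^{p,\infty}$, I would exploit the sandwich inequality \eqref{eq10}, which asserts $Y^-_t \le Y^{p,q}_t \le Y^+_t$ for all $p,q,t$, with $Y^\pm$ the solutions of the BSDEs \eqref{eq5}--\eqref{eq6} that do not depend on $p$ or $q$. Since $Y^{p,q} \nearrow Y^{p,\infty}$ as $q \to \infty$, passing to the limit yields $Y^-_t \le Y^{p,\infty}_t \le Y^+_t$, whence
\begin{equation*}
\E\left[\sup_{0 \le t \le T}(Y^{p,\infty}_t)^2\right] \le \E\left[\sup_{0 \le t \le T} (Y^-_t)^2\right] + \E\left[\sup_{0 \le t \le T} (Y^+_t)^2\right] \le C,
\end{equation*}
invoking the $\mS^2$-bounds on $Y^\pm$ (obtained via It\^o's formula and the BDG inequality, as noted in the proof of Proposition \ref{prop4}).

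For $Z^{p,\infty}$, $U^{p,\infty}$ and $A^{p,\infty}_T$, the key ingredient is lower semi-continuity of the Hilbert norm under weak convergence. Since $Z^{p,q} \rightharpoonup Z^{p,\infty}$ in $\H^2$, $U^{p,q} \rightharpoonup U^{p,\infty}$ in $\H^2_\nu$, and $A^{p,q}_T \rightharpoonup A^{p,\infty}_T$ in $L^2(\mF_T)$, we obtain
\begin{equation*}
\|Z^{p,\infty}\|_{\H^2}^2 \le \liminf_{q} \|Z^{p,q}\|_{\H^2}^2, \qquad \|U^{p,\infty}\|_{\H^2_\nu}^2 \le \liminf_{q} \|U^{p,q}\|_{\H^2_\nu}^2, \qquad \E[(A^{p,\infty}_T)^2] \le \liminf_{q} \E[(A^{p,q}_T)^2],
\end{equation*}
each of which is bounded by $C$ thanks to \eqref{eq12}. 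Finally, the bound on $K^{p,\infty}_T$ is already written in the excerpt: $K^{p,q}_T \nearrow K^{p,\infty}_T$ monotonically, so by the monotone convergence theorem $\E[(K^{p,\infty}_T)^2] = \lim_q \E[(K^{p,q}_T)^2] \le C$.

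Since none of the constants invoked depend on $p$, collecting the five bounds yields the Lemma. There is no substantial obstacle: the statement is essentially a bookkeeping consequence of Proposition \ref{prop4} combined with the three convergence modes recalled above, the only point requiring care being to match each limiting quantity to the correct mode of convergence (strong $\mS^2$ sandwich for $Y$, weak $L^2$-type lower semi-continuity for $Z$, $U$ and $A_T$, monotone convergence for $K_T$).
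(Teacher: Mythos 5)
Your proposal is correct and follows essentially the same route as the paper, which states Lemma \ref{lem1} as a direct consequence of the uniform bound \eqref{eq12} of Proposition \ref{prop4} combined with the convergence modes established just beforehand (explicitly invoking monotone convergence for $K^{p,\infty}_T$, and implicitly the sandwich \eqref{eq10} for $Y^{p,\infty}$ and weak lower semi-continuity of the norm for the weak limits $Z^{p,\infty}$, $U^{p,\infty}$, $A^{p,\infty}_T$). You have simply made explicit the bookkeeping the paper leaves to the reader, and each quantity is matched to the correct mode of convergence.
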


From Theorem \ref{QS14:th5.1}, we have $Y^{p,\infty}_t \ge Y^{p+1,\infty}_t$, then there exists a
process $Y$ such that $Y^{p,\infty} \searrow Y$. By using Fatou's lemma, we
get
\begin{align*}
  \E\left(\sup_{0 \le t \le T} (Y_t)^2 \right) \le C,
\end{align*}
and the dominated convergence theorem gives us that $\lim_{p \rightarrow
  \infty} Y^{p,\infty}=Y$ in $\H^2$. Since $(Y^{p,q})_p$ is a decreasing
sequence, $(A^{p,q})_p$ is an increasing sequence, and by passing to the limit
($(A^{p,q}_t)_q$ weakly converges to $A^{p,\infty}_t$), we get $A^{p,\infty}_t
\le A^{p+1,\infty}_t$. Then, we deduce from Lemma \ref{lem1} that there exists
a process $A$ such that $A^{p,\infty} \nearrow A$ and $\E(A^2_T) < \infty$. Since
$A^{p,q}_t-A^{p,q}_s=\int_s^t q (\xi_r-Y^{p,q}_r)^+dr \le \int_s^t q
(\xi_r-Y^{p+1,q}_r)^+dr=A^{p+1,q}_t-A^{p+1,q}_s$, we get that
\begin{align*}
  A^{p,\infty}_t-A^{p,\infty}_s \le A^{p+1,\infty}_t-A^{p+1,\infty}_s\;\; \forall\; 0\le s \le t \le T.
\end{align*}

Thanks to Lemma \ref{lem1}, we can apply the ``generalized monotonic Theorem''
\ref{thm2}: there exist $Z \in \H^2$, $U \in \H^2_{\nu}$ and $K \in \mA^2$ such that
\begin{align}\label{eq15}
  Y_t=\xi_T + \int_t^T g(s,Y_s,Z_s,U_s)ds + A_T-A_t -(K_T-K_t) -\int_t^T Z_s
  dW_s -\int_t^T\int_{\R^*} U_s(e) \tilde{N}(ds,de), 
\end{align}
$K_t$ is the weak limit of $K^{p,\infty}_t$ in $L^2(\mF_t)$, $Z$ is the weak
limit of $Z^{p,\infty}$ in $\H^2$ and $U$ is the weak
limit of $U^{p,\infty}$ in $\H^2_{\nu}$. Moreover, $A^{p,\infty}_t$ strongly
converges to $A_t$ in $L^2(\mF_t)$ and $A \in \mA^2$, and we have for each $r \in [1,2[$,
\begin{align}\label{eq24}
  \lim_{p\rightarrow \infty} \E \left( \int_0^T
    |Y^{p,\infty}_s-Y_s|^2 ds \right) + \E \left( \int_0^T
    |Z^{p,\infty}_s-Z_s|^r ds + \int_0^T\left( \int_{\R^*} |U^{p,\infty}_s-U_s|^2 \nu(de)\right)^{\frac{r}{2}}ds \right)=0.
\end{align}

{\bf Proof of point 2.}\\

Similarly, $(Y^{p,q})_p$ is decreasing for any fixed $q$. The same arguments
as before give that $(Y^{p,q},Z^{p,q},U^{p,q})_{p \in \N}$ has a limit
$(Y^{\infty,q},Z^{\infty,q},U^{\infty,q}):=\theta^{\infty,q}$ such that $(Y^{p,q})_p$ converges
decreasingly to $Y^{\infty,q} \in \mS^2$, and thanks to Theorem \ref{Ess08:th3.1}, we know that there exists
$Z^{\infty,q} \in \H^2$, $U^{\infty,q} \in \H^2_{\nu}$ and $K^{\infty,q} \in
\mA^2$ such that
$(Y^{\infty,q},Z^{\infty,q},U^{\infty,q},K^{\infty,q})$
satisfies the following equation
\begin{align}\label{eq16}
  Y^{\infty,q}_t=&\xi_T+\int_t^T
  g(s,\theta^{\infty,q}_s)ds+q\int_t^T
  (Y^{\infty,q}_s-\xi_s)^- ds-(K^{\infty,q}_T-K^{\infty,q}_t) -\int_t^T
  Z^{\infty,q}_s dW_s \notag\\
  &-\int_t^T
  \int_{\R^*} U^{\infty,q}_s(e) \tilde{N}(ds,de)
\end{align}
$Z^{\infty,q}$ is the weak limit of $(Z^{p,q})_p$ in $\H^2$, $U^{\infty,q}$ is
the weak limit of $(U^{p,q})_p$ in $\H^2_{\nu} $ and $K^{\infty,q}_t$ is the weak
limit of $(K^{p,q}_t)_p$ in $L^2(\mF_t)$. From \cite[Theorem 5.1]{Ess08}, we
also get that $\forall t \in [0,T]$, $Y^{\infty,q}_t \le \zeta_t$ and $\int_0^T
(Y^{\infty,q}_{t^-}-\zeta_{t^-}) dK^{\infty,q}_t=0$ a.s. Set
$A^{\infty,q}_t=q\int_0^t (Y^{\infty,q}_s-\xi_s)^- ds$. Since $Y^{p,q}
\searrow Y^{\infty,q}$ when $p \rightarrow \infty$, $A^{p,q}\nearrow
A^{\infty,q}$ when $p \rightarrow \infty$. By the monotone convergence theorem
and \eqref{eq12}, we get that $\E((A^{\infty,q}_T)^2) \le C$. We get the
following result, equivalent to Lemma \ref{lem1}
\begin{lemma}\label{lem2}
  There exists a constant $C$ independent of $q$ such that
  \begin{align*}
    \E\left[\sup_{0 \le t \le T} (Y^{\infty,q}_t)^2\right] + \E\left[\int_0^T
      |Z^{\infty,q}_t|^2 dt\right] +\E\left[\int_0^T
      \int_{\R^*}|U^{\infty,q}_t(e)|^2\nu(de) dt\right] + \E[(A^{\infty,q}_T)^2]+
    \E[(K^{\infty,q}_T)^2] \le C.
  \end{align*}
\end{lemma}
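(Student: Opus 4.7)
The statement is the mirror image of Lemma \ref{lem1}, obtained by swapping the roles of $p$ and $q$. Since the paper has already set up the convergence $(Y^{p,q},Z^{p,q},U^{p,q}) \to (Y^{\infty,q},Z^{\infty,q},U^{\infty,q})$ as $p \to \infty$ and derived equation \eqref{eq16} together with the $L^2$-weak convergence of $K^{p,q}_t$ to $K^{\infty,q}_t$, the plan is to simply pass to the limit in the uniform estimate \eqref{eq12} of Proposition \ref{prop4}.

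First, for the $L^2$--norms of $Z^{\infty,q}$, $U^{\infty,q}$, and $K^{\infty,q}_T$, I would rely on weak lower semicontinuity. Since $Z^{p,q} \rightharpoonup Z^{\infty,q}$ in $\H^2$, $U^{p,q} \rightharpoonup U^{\infty,q}$ in $\H^2_\nu$, and $K^{p,q}_T \rightharpoonup K^{\infty,q}_T$ in $L^2(\mF_T)$, we have
\begin{align*}
\E\!\left[\int_0^T |Z^{\infty,q}_t|^2 dt\right] \le \liminf_{p\to\infty}\E\!\left[\int_0^T |Z^{p,q}_t|^2 dt\right] \le C,
\end{align*}
and the same argument for the other two, where $C$ is the $p,q$--independent constant of \eqref{eq12}.

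Second, for $\E[\sup_{0 \le t \le T} (Y^{\infty,q}_t)^2]$, I would use the sandwiching already established in the proof of Proposition \ref{prop4}: the inequality $Y^-_t \le Y^{p,q}_t \le Y^+_t$ (see \eqref{eq10}) passes to the decreasing limit in $p$, giving $Y^-_t \le Y^{\infty,q}_t \le Y^+_t$. Since both $\E[\sup_t (Y^+_t)^2]$ and $\E[\sup_t (Y^-_t)^2]$ are finite, we obtain $\E[\sup_t (Y^{\infty,q}_t)^2] \le C$ uniformly in $q$. Alternatively one could apply Fatou's lemma to the pointwise convergence of $(Y^{p,q})_p$.

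Finally, for $\E[(A^{\infty,q}_T)^2]$, as the paper already noted, $A^{p,q} \nearrow A^{\infty,q}$ as $p\to\infty$, so monotone convergence combined with \eqref{eq12} gives $\E[(A^{\infty,q}_T)^2] \le C$. There is no real obstacle here; the only care needed is to invoke weak (rather than strong) lower semicontinuity for the three terms where only weak convergence in $p$ is available, which is why the uniform bound of Proposition \ref{prop4} is essential as the starting point.
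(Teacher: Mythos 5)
Your proposal is correct and follows essentially the same route as the paper: the paper also obtains the bound on $\E[(A^{\infty,q}_T)^2]$ by monotone convergence of $A^{p,q}\nearrow A^{\infty,q}$ combined with the $p,q$-uniform estimate \eqref{eq12}, gets the $Y^{\infty,q}$ bound from the sandwich $Y^-\le Y^{p,q}\le Y^+$ established in Proposition \ref{prop4}, and the remaining terms from the weak limits of $(Z^{p,q},U^{p,q},K^{p,q}_T)_p$ together with lower semicontinuity of the norm, exactly as you describe.
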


 From Theorem \ref{QS14:th5.1}, we have $Y^{\infty,q}_t \le Y^{\infty,q+1}_t$, then there exists a
process $Y'$ such that $Y^{\infty,q} \nearrow Y'$. By using Fatou's lemma, we
get that $Y'$ belongs to $\mS^2$, and the convergence also holds in $\H^2$. By
using the same proof as before, we can apply Theorem \ref{thm2}: there exist $Z' \in \H^2$, $U' \in \H^2_{\nu}$ and $A' \in \mA^2$ such that
\begin{align*}
  Y'_t=\xi_T + \int_t^T g(s,Y'_s,Z'_s,U'_s)ds + A'_T-A'_t -(K'_T-K'_t) -\int_t^T Z'_s
  dW_s -\int_t^T\int_{\R^*} U'_s(e) \tilde{N}(ds,de), 
\end{align*}
$A'_t$ is the weak limit of $A^{\infty,q}_t$ in $L^2(\mF_t)$, $Z'$ is the weak
limit of $Z^{\infty,q}$ in $\H^2$ and $U'$ is the weak
limit of $U^{\infty,q}$ in $\H^2_{\nu}$. Moreover, $K^{\infty,q}_t$ strongly
converges to $K'_t$ in $L^2(\mF_t)$ and $K' \in \mA^2$. We will now prove that the two limits are
equal.\\

{\bf Proof of point 3.}

\begin{lemma}\label{lem3}
  The two limits $Y$ and $Y'$ are equal. Moreover $Z=Z'$, $U=U'$ and $A-K=A'-K'$.
\end{lemma}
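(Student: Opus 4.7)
My plan is to first establish $Y = Y'$ in two sub-steps ($Y' \le Y$ by pure monotonicity, and $Y \le Y'$ via a Dynkin-game argument), and then deduce $Z = Z'$, $U = U'$ and $A - K = A' - K'$ from uniqueness of a semimartingale decomposition. For the first sub-step I would invoke the comparison theorem for BSDEs with jumps (Remark~\ref{rem11}) applied to~\eqref{eq3}, which yields that $Y^{p,q}$ is decreasing in $p$ and increasing in $q$; hence for every $(p,q) \in \N^2$ and every $t \in [0,T]$, $Y^{\infty,q}_t \le Y^{p,q}_t \le Y^{p,\infty}_t$ a.s. Sending $p \to \infty$ (so that $Y^{p,\infty} \searrow Y$) and then $q \to \infty$ (so that $Y^{\infty,q} \nearrow Y'$) would immediately give $Y' \le Y$.

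The main obstacle is the reverse inequality $Y \le Y'$. My plan would be to use the stochastic-game characterization advertised before the statement. For fixed $p$, $Y^{p,\infty}$ solves a \emph{singly} reflected BSDE with jumps, lower barrier $\xi$ and driver $g - p(\zeta_\cdot - y)^-$; by the Snell-envelope representation of singly reflected BSDEs with jumps (\cite{HO13,QS14}) it admits an essential-supremum representation over $\tau \in \mT_t$ of a payoff involving $\xi_\tau$ and the penalty. Symmetrically, $Y^{\infty,q}$ is a singly reflected BSDE with upper barrier $\zeta$ and an essential-infimum representation indexed by $\sigma \in \mT_t$. Passing to the limit in $p$ (resp.\ in $q$) should turn the penalty into the hard constraint $\,\cdot\,\le \zeta$ (resp.\ $\,\cdot\,\ge \xi$) and produce
\begin{align*}
Y_t = \essinf_{\sigma \in \mT_t} \esssup_{\tau \in \mT_t} J_t(\tau,\sigma), \qquad Y'_t = \esssup_{\tau \in \mT_t} \essinf_{\sigma \in \mT_t} J_t(\tau,\sigma),
\end{align*}
for a common Dynkin payoff $J_t(\tau,\sigma)$ built from the driver $g$ together with $\xi_\tau$ on $\{\tau \le \sigma\}$, $\zeta_\sigma$ on $\{\sigma < \tau\}$ and the terminal value on $\{\tau \wedge \sigma = T\}$. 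Under Mokobodski's condition, Proposition~\ref{stochasticgame} guarantees that this Dynkin game admits a value, so the two iterated essential extrema coincide and $Y = Y'$. The delicate point is the limit procedure: commuting $p \to \infty$ (resp.\ $q \to \infty$) with the essential supremum (resp.\ infimum), and verifying that the penalty does convert to a hard constraint in the limit.

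Once $Y = Y'$ is secured, I would finish as follows. Setting $\alpha := A - K$ and $\alpha' := A' - K'$, both instances of~\eqref{eq15} for $(Y,Z,U,\alpha)$ and for $(Y',Z',U',\alpha')$ provide decompositions of the same special semimartingale $Y$ as a predictable finite-variation process plus a martingale driven by $W$ and $\tilde N$. Uniqueness of the Doob--Meyer decomposition, combined with the orthogonality of the continuous and purely discontinuous martingale parts stemming from the independence of $W$ and the Poisson random measure, forces $Z = Z'$ in $\H^2$, $U = U'$ in $\H^2_\nu$, and $\alpha = \alpha'$, i.e.\ $A - K = A' - K'$.
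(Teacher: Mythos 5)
Your first and last steps are fine and match the paper: $Y'\le Y$ follows from the sandwich $Y^{\infty,q}\le Y^{p,q}\le Y^{p,\infty}$ and the two iterated monotone limits, and once $Y=Y'$ is known, subtracting the two limit equations and using uniqueness of the decomposition of the special semimartingale $Y$ (martingale part versus predictable finite-variation part, then the martingale representation) does give $Z=Z'$, $U=U'$ and $A-K=A'-K'$.

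The genuine gap is in the reverse inequality $Y\le Y'$. The Dynkin-game route is circular as you have set it up: the payoff contains the running term $\int g(s,Y_s,Z_s,U_s)\,ds$, so the game whose upper value would be $Y$ and the game whose lower value would be $Y'$ are built from \emph{different} data, $(Y,Z,U)$ and $(Y',Z',U')$ respectively; they only share a ``common payoff'' if these triples already coincide, which is precisely what is to be proved. In addition, Proposition \ref{stochasticgame} is stated for a solution of the DBBSDE \eqref{eq0}, and at this stage neither limit has been identified as such a solution (that identification is the content of the theorem that \emph{follows} this lemma and itself uses $Y=Y'$); finally, the limit-commutation step you flag as delicate is exactly where the work would lie, and it is not carried out. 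The paper's own argument is far more elementary and avoids all of this: since $Y^{\infty,q}\le Y^{p,q}$ and $A^{p,q}_t=q\int_0^t(Y^{p,q}_s-\xi_s)^-\,ds$ with $y\mapsto(y-\xi)^-$ nonincreasing, one gets $A^{p,q}_t-A^{p,q}_s\le A^{\infty,q}_t-A^{\infty,q}_s$ for $s\le t$, and symmetrically $K^{p,q}_t-K^{p,q}_s\le K^{p,\infty}_t-K^{p,\infty}_s$; passing to the weak and strong limits in $q$ and $p$ yields
\begin{align*}
A_t-A_s-(K_t-K_s)\le A'_t-A'_s-(K'_t-K'_s),\qquad 0\le s\le t\le T,
\end{align*}
and the comparison theorem for BSDEs with jumps, applied to the two limit equations viewed as BSDEs with the same driver and terminal condition but ordered finite-variation inputs, gives $Y\le Y'$. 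You should replace the game argument by this monotonicity-plus-comparison step.
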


\begin{proof}
Since $Y^{p,q} \nearrow Y^{p,\infty}$ and  $Y^{p,q} \searrow Y^{\infty,q}$, we
get that for all $p,q \in \N$, $Y^{\infty,q} \le  Y^{p,q} \le
Y^{p,\infty}$. Then, since $Y^{p,\infty} \searrow Y$ and $Y^{\infty,q}
\nearrow Y'$, we get $Y' \le Y$. On the other hand, since $Y^{\infty,q} \le
Y^{p,q}$, we get that for all $0 \le s \le t \le T$
\begin{align*}
  A^{p,q}_t-A^{p,q}_s \le A^{\infty,q}_t - A^{\infty,q}_s.
\end{align*}
Since $(A^{p,q}_t)_q$ weakly converges to $A^{p,\infty}_t$ in $L^2(\mF_t)$,
$(A^{\infty,q}_t)_q$ weakly converges to $A'_t$ in $L^2(\mF_t)$, and $(A^{p,\infty}_t)_p$
strongly converges to $A_t$ in $L^2(\mF_t)$, taking limit in $q$ and then limit in
$p$ gives
\begin{align}\label{eq13}
  A_t-A_s \le A'_t - A'_s.
\end{align}
Since $Y^{p,q} \le Y^{p,\infty}$, we get that for all $0 \le s \le t \le T$
\begin{align*}
  K^{p,q}_t-K^{p,q}_s \le K^{p,\infty}_t - K^{p,\infty}_s.
\end{align*}
Letting $p \rightarrow \infty$ and $q\rightarrow \infty$ leads to
\begin{align}\label{eq14}
  K'_t-K'_s \le K_t - K_s.
\end{align}
Combining \eqref{eq13} and \eqref{eq14} gives that  for all $0 \le s \le t \le T$
\begin{align*}
  A_t-A_s - (K_t - K_s) \le A'_t - A'_s - (K'_t - K'_s).
\end{align*}
Thanks to Theorem \ref{QS13:th4.2}, we get that $Y' \ge
Y$. Then $Y'=Y$, and we get $Z'=Z$, $U'=U$, and $A'-K'=A-K$.

\end{proof}

{\bf Proof of point 4.}\\

It remains to prove that the limit $(Y,Z,U,A-K)$ of the penalized BSDE is the
solution of the reflected BSDE with two RCLL barriers $\xi$ and $\zeta$. To do
so, we use the links between  Dynkin games and DBBSDEs (see Proposition \ref{stochasticgame})
and Snell envelope theory (see Appendix \ref{sect:snell}).

\begin{theorem} Let $\alpha:=A-K$.
  The quartuple $(Y,Z,U,\alpha)$ solving \eqref{eq15} is the
  unique solution to \eqref{eq0}.
\end{theorem}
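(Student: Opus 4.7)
The plan is to verify the four conditions defining a solution of \eqref{eq0} for the candidate $(Y,Z,U,\alpha)$ with $\alpha := A - K$ produced by the penalization procedure; uniqueness then follows directly from Theorem \ref{thm1}. Conditions (i) and (ii) are immediate from the construction: $Y \in \mS^2$ by Fatou applied to the uniform bound $\E[\sup_t (Y^{p,\infty}_t)^2]\le C$ from Lemma \ref{lem1}; $Z\in \H^2$ and $U\in \H^2_\nu$ are the weak limits of $Z^{p,\infty},\,U^{p,\infty}$; $A\in \mA^2$ follows from the monotone $L^2$-convergence $A^{p,\infty}\nearrow A$, and $K\in \mA^2$ is produced by the generalized monotonic Theorem \ref{thm2}; equation (ii) is precisely \eqref{eq15}.

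For condition (iii), for each fixed $p$ the quadruple $(Y^{p,\infty},Z^{p,\infty},U^{p,\infty},A^{p,\infty})$ solves the one-sided reflected BSDE with lower obstacle $\xi$, as recorded in \eqref{eq17} via \cite[Theorem 5.1]{Ess08}; in particular $Y^{p,\infty}_t\ge \xi_t$ a.s. Letting $p\to\infty$ along the decreasing sequence $Y^{p,\infty}\searrow Y$ yields $Y_t\ge \xi_t$. Symmetrically, $(Y^{\infty,q},Z^{\infty,q},U^{\infty,q},K^{\infty,q})$ solves a one-sided reflected BSDE with upper obstacle $\zeta$, so $Y^{\infty,q}_t\le \zeta_t$ a.s.; since by Lemma \ref{lem3} we have $Y^{\infty,q}\nearrow Y$, the upper bound $Y_t\le \zeta_t$ follows.

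The main step is condition (iv). For every $p$, \cite[Theorem 5.1]{Ess08} applied to \eqref{eq17} gives $\int_0^T (Y^{p,\infty}_{t^-}-\xi_{t^-})\,dA^{p,\infty}_t=0$ a.s.; since $\xi\le Y\le Y^{p,\infty}$, one has $0\le Y_{t^-}-\xi_{t^-}\le Y^{p,\infty}_{t^-}-\xi_{t^-}$, so integrating the nonnegative integrand $Y_{t^-}-\xi_{t^-}$ against the nonnegative measure $dA^{p,\infty}_t$ yields
\[
0\le \int_0^T (Y_{t^-}-\xi_{t^-})\, dA^{p,\infty}_t \le \int_0^T (Y^{p,\infty}_{t^-}-\xi_{t^-})\, dA^{p,\infty}_t = 0,
\]
so this intermediate integral vanishes for every $p$. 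It then remains to pass to the limit $p\to\infty$ to replace $dA^{p,\infty}$ with $dA$. The pathwise monotone increase $A^{p,\infty}(\omega)\uparrow A(\omega)$, combined with the strong $L^2(\mF_t)$-convergence produced by Theorem \ref{thm2}, permits a monotone-convergence argument for Stieltjes integrals path by path, with domination of the integrand by the $\mS^2$-process $Y^{1,\infty}_{t^-}+|\xi_{t^-}|$ providing the required uniform integrability; one concludes $\int_0^T (Y_{t^-}-\xi_{t^-})\, dA_t = 0$ a.s. The upper Skorokhod identity $\int_0^T (\zeta_{t^-}-Y_{t^-})\, dK_t = 0$ follows by the symmetric argument with $(Y^{\infty,q},K^{\infty,q})$ replacing $(Y^{p,\infty},A^{p,\infty})$, exploiting the inequality $Y^{\infty,q}\le Y \le \zeta$ and the Skorokhod condition for the upper-reflected penalized BSDE \eqref{eq16}. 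The hard part is precisely this interchange of limit and Stieltjes integral, since $A^{p,\infty}_t\to A_t$ only in $L^2(\mF_t)$ pointwise in $t$ and not uniformly; monotonicity of the approximating sequence is the structural feature that makes the argument work without uniform or total-variation convergence.
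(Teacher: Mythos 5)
Your verification of (i), (ii) and (iii) is sound, and for (iii) your monotone-limit argument ($\xi\le Y^{p,\infty}\searrow Y$ and $Y^{\infty,q}\nearrow Y\le\zeta$) is more elementary than the paper's, which obtains (iii) only as a by-product of the Dynkin-game identification of $Y$. The first half of your argument for (iv) is also essentially correct: since the increments $A^{p,\infty}_t-A^{p,\infty}_s$ are nondecreasing in $p$ and $A^{p,\infty}_t\uparrow A_t$, the Stieltjes measures $dA^{p,\infty}$ increase setwise to $dA$, so the nonnegative integrals $\int_0^T(Y_{t^-}-\xi_{t^-})\,dA^{p,\infty}_t$, each of which vanishes because $\xi\le Y\le Y^{p,\infty}$ and $\int_0^T(Y^{p,\infty}_{t^-}-\xi_{t^-})\,dA^{p,\infty}_t=0$ by \cite[Theorem 5.1]{Ess08}, increase to $\int_0^T(Y_{t^-}-\xi_{t^-})\,dA_t$; it is the monotonicity of the measures, not uniform integrability or domination, that carries the limit.

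The gap is in the second half of (iv). The symmetric argument applied to $(Y^{\infty,q},K^{\infty,q})$ yields $\int_0^T(\zeta_{t^-}-Y_{t^-})\,dK'_t=0$ for $K'=\lim_q K^{\infty,q}$, whereas the process $K$ appearing in \eqref{eq15} is the weak limit of $K^{p,\infty}$ arising from the other iterated limit. Lemma \ref{lem3} identifies only $A-K=A'-K'$, not $K=K'$ and $A=A'$ separately; the decomposition of $\alpha$ into two increasing processes is in general not unique unless $\xi_{t^-}<\zeta_{t^-}$, which is not assumed here. You therefore obtain the lower Skorokhod condition for the decomposition $(A,K)$ and the upper one for the different decomposition $(A',K')$, while condition (iv) of \eqref{eq0} requires both for a single decomposition: setting $d\rho:=dA'-dA=dK-dK'\ge 0$, you would still have to show that $\rho$ charges only $\{Y_{t^-}=\zeta_{t^-}\}$ (or only $\{Y_{t^-}=\xi_{t^-}\}$), and nothing in your argument provides this. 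This is exactly the difficulty the paper's proof is built to avoid: it identifies $Y-M$ with the difference $X^+-X^-$ of the two interacting Snell envelopes of \cite[Theorem 5.1]{LX07}, whose Doob--Meyer parts $A^1,K^1$ give one decomposition $\alpha=A^1-K^1$ for which both minimality conditions hold simultaneously (Section \ref{sect:snell}). You need either that construction or a separate argument identifying $K$ with $K'$.
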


\begin{proof}\label{thm5} We know from Theorem \ref{thm1} that \eqref{eq0} has
  a unique solution. We already know that $(Y,Z,U,A,K)$ belongs to $\mS^2
  \times \H^2 \times \H^2_{\nu} \times \mA^2 \times \mA^2$ and satisfies
  $(ii)$. It remains to check $(iii)$ and $(iv)$. We first check $(iii)$. From
  \eqref{eq17}, we know that
  $(Y^{p,\infty},Z^{p,\infty},U^{p,\infty},A^{p,\infty})$ is the solution of
  a reflected BSDE (RBSDE in the following) with one lower barrier $\xi$. Let
  $\alpha^{p,\infty}:=A^{p,\infty}-K^{p,\infty}$. Then,
  $(Y^{p,\infty},Z^{p,\infty},U^{p,\infty},\alpha^{p,\infty})$ can be
  considered as the solution of a RBSDE with two barriers $\xi$ and
  $\zeta+(\zeta-Y^{p,\infty})^-$, since we have
\begin{align*}
  \xi \le Y^{p,\infty} \le \zeta+(\zeta-Y^{p,\infty})^-,\;\; \int_0^T (Y^{p,\infty}_t-\xi_t)dA^{p,\infty}_t=0
\end{align*}
and

\begin{align*}
  \int_0^T (Y^{p,\infty}_t-\zeta_t-(\zeta-Y^{p,\infty})_t^-)
  dK^{p,\infty}_t=-p\int_0^T (Y^{p,\infty}_t-\zeta_t)^-(\zeta_t-Y^{p,\infty}_t)^-dt=0.
\end{align*}
 From Proposition \ref{stochasticgame} we know that
\begin{align*}
  Y^{p,\infty}_t=& \essinf_{\sigma \in \mT_t} \esssup_{\tau \in
    \mT_t}\E\left(\int_t^{\sigma \wedge \tau} g(s,\theta^{p,\infty}_s)ds +
    \xi_{\tau}\ind_{\tau \le \sigma}+ \zeta_{\s}\ind_{\s<\tau}
    +(\zeta_{\s}-Y^{p,\infty}_{\s})^- \ind_{\s < \tau}\big | \mF_t \right) \\
  \ge& \essinf_{\sigma \in \mT_t} \esssup_{\tau \in
    \mT_t}\E\left(\int_t^{\sigma \wedge \tau} g(s,\theta^{p,\infty}_s)ds +
    \xi_{\tau}\ind_{\tau \le \sigma}+ \zeta_{\s}\ind_{\s<\tau}\big | \mF_t
  \right)\\
  \ge& \essinf_{\sigma \in \mT_t} \esssup_{\tau \in
    \mT_t}\E\left(\int_t^{\sigma \wedge \tau} g(s,\theta_s)ds +
    \xi_{\tau}\ind_{\tau \le \sigma}+ \zeta_{\s}\ind_{\s<\tau}\big | \mF_t
  \right)\\
  &-C_g\E \left( \int_0^T |Y^{p,\infty}_s-Y_s|+
    |Z^{p,\infty}_s-Z_s|+\|U^{p,\infty}_s-U_s\|_{\nu}ds | \mF_t
  \right).
\end{align*}
Since $Y^{p,\infty}\rightarrow Y$ in $\H^2$, $Z^{p,\infty}\rightarrow Z$ in
$\H^r$ for $r<2$, and $U^{p,\infty}\rightarrow U$ in
$\H^r_{\nu}$ for $r<2$, there exists a subsequence $p_j$ such that the last
conditional expectation converges to $0$ a.s. Taking the limit in $p$ in the
last inequality gives
\begin{align}\label{eq18}
  Y_t \ge \essinf_{\sigma \in \mT_t}  \esssup_{\tau \in
    \mT_t}\E\left(\int_t^{\sigma \wedge \tau} g(s,\theta_s)ds +
    \xi_{\tau}\ind_{\tau \le \sigma}+ \zeta_{\s}\ind_{\s<\tau}\big | \mF_t
  \right).
\end{align}

In the same way, we know that $(Y^{\infty,q},Z^{\infty,q},U^{\infty,q},K^{\infty,q})$ is the
solution of a RBSDE with one upper barrier $\zeta$. Let
$\alpha^{\infty,q}:=A^{\infty,q}-K^{\infty,q}$. Then
$(Y^{\infty,q},Z^{\infty,q},U^{\infty,q},\alpha^{\infty,q})$ is
the solution of a RBSDE with two barriers $\xi-(Y^{\infty,q}-\xi)^-$ and
$\zeta$. By Proposition \ref{stochasticgame} we know that
\begin{align*}
  Y^{\infty,q}_t\le&  \esssup_{\tau \in \mT_t} \essinf_{\sigma \in \mT_t} \E\left(\int_t^{\sigma \wedge \tau} g(s,\theta_s)ds +
    \xi_{\tau}\ind_{\tau \le \sigma}+ \zeta_{\s}\ind_{\s<\tau}\big | \mF_t
  \right)\\
  &+C_g \E \left( \int_0^T |Y^{\infty,q}_s-Y_s|+
    |Z^{\infty,q}_s-Z_s|+\|U^{\infty,q}_s-U_s\|_{\nu}ds | \mF_t
  \right).
\end{align*}
Since $Y^{\infty,q}\rightarrow Y$ in $\H^2$, $Z^{\infty,q}\rightarrow Z$ in
$\H^r$ for $r<2$, and $U^{\infty,q}\rightarrow U$ in
$\H^r_{\nu}$ for $r<2$, there exists a subsequence $q_j$ such that the last
conditional expectation converges to $0$ a.s. Taking the limit in $q$ in the
last inequality gives
\begin{align}\label{eq19}
  Y_t \le \esssup_{\tau \in \mT_t}  \essinf_{\sigma \in \mT_t} \E\left(\int_t^{\sigma \wedge \tau} g(s,\theta_s)ds +
    \xi_{\tau}\ind_{\tau \le \sigma}+ \zeta_{\s}\ind_{\s<\tau}\big | \mF_t
  \right).
\end{align}
Comparing \eqref{eq18} and \eqref{eq19} and since $ \esssup  \essinf \le
\essinf  \esssup$, we deduce
\begin{align*}
  Y_t &= \esssup_{\tau \in \mT_t}  \essinf_{\sigma \in \mT_t} \E\left(\int_t^{\sigma \wedge \tau} g(s,\theta_s)ds +
    \xi_{\tau}\ind_{\tau \le \sigma}+ \zeta_{\s}\ind_{\s<\tau}\big | \mF_t
  \right)\\
  &=  \essinf_{\sigma \in \mT_t} \esssup_{\tau \in \mT_t} \E\left(\int_t^{\sigma \wedge \tau} g(s,\theta_s)ds +
    \xi_{\tau}\ind_{\tau \le \sigma}+ \zeta_{\s}\ind_{\s<\tau}\big | \mF_t
  \right).
\end{align*}
Let $M_t:=\E(\xi_T+\int_0^T g(s,\theta_s)ds | \mF_t) - \int_0^t
g(s,\theta_s)ds$, $\tilde{\xi}_t=\xi_t - M_t$ and $\tilde{\zeta}_t=\zeta_t
-M_t$. We can rewrite $Y$ in the following form
\begin{align*}
  Y_t &= \esssup_{\tau \in \mT_t}  \essinf_{\sigma \in \mT_t}
  \E\left( \tilde{\xi}_{\tau}\ind_{\tau \le \sigma}+
    \tilde{\zeta}_{\s}\ind_{\s<\tau}\big | \mF_t
  \right)+M_t\\
  &= \essinf_{\sigma \in \mT_t} \esssup_{\tau \in \mT_t}
  \E\left(\tilde{\xi}_{\tau}\ind_{\tau \le \sigma}+
    \tilde{\zeta}_{\s}\ind_{\s<\tau}\big | \mF_t \right)+M_t
\end{align*}
Then $Y_t-M_t$ is the value of a stochastic game problem with payoff $I_t(\tau,\s)=\tilde{\xi}_{\tau}\ind_{\tau \le \sigma}+
    \tilde{\zeta}_{\s}\ind_{\s<\tau}$. Let us check that $\tilde{\xi}$ and
    $\tilde{\zeta}$ are in $\mS^2$. Since $\xi$ and $\zeta$ are in $\mS^2$, we
    only have to check that $M \in \mS^2$. Using Doob's inequality
    \begin{align*}
      \E(\sup_{0\le t \le T} (M_t)^2) &\le 2 \E \left( \sup_{0\le t \le
          T}\left(E(\xi+\int_0^T g(s,\theta_s)ds | \mF_t)\right)^2+\left(
          \int_0^T |g(s,\theta_s)|ds \right)^2\right),\\
      &\le C(1+\E \int_0^T |Y_s|^2 + |Z_s|^2 +\|U_s\|_{\nu}^2 ds ) < \infty.
    \end{align*}
    Since $\tilde{\xi}_T=\tilde{\zeta}_T=0$ and $\xi$ and $\zeta$ satisfy
    Mokobodzki'condition, we can apply \cite[Theorem 5.1]{LX07}: there
    exists a pair of non-negative RCLL supermatingales $(X^+,X^-)$ in
    $\mS^2$ such that
    \begin{align}\label{eq20}
      &X^+_t=\mR_t(X^-+\tilde{\xi}),\\
      &X^-_t=\mR_t(X^+-\tilde{\zeta})\notag
    \end{align}
    where $\mR_t(\phi)$ denotes the Snell enveloppe of $\phi$ (see Appendix \ref{sect:snell}). Thanks to
     \cite[Theorem 5.2]{LX07}, we know that $Y_t-M_t=X^+_t-X^-_t$. Moreover, by the
    Doob-Meyer decomposition theorem, we get
    \begin{align*}
      X^+_t=\E( A^{1}_T|\mF_t)-A^1_t,\; X^-_t=\E(K^1_T |\mF_t)-K^1_t 
    \end{align*} where $A^1,K^1$ are predictable increasing processes
    belonging to $\mA^2$. With the representation theorem for the martingale
    part we know that there exists $Z^1 \in \mathbb{H}^2$ and $U^1 \in
    \mathbb{H}^2_{\nu}$ such that
    \begin{align*}
      Y_t&=M_t+X^+_t-X^-_t\\
      &=\E(\xi+\int_0^T g(s,\theta_s)ds+A^1_T-K^1_T| \mF_t) -\int_0^t
      g(s,\theta_s)ds-A^1_t+K^1_t,\\
      &=Y_0+\int_0^t Z^1_s dW_s+ \int_0^t \int_{\R^*}U^1_s(e) \tilde{N}(ds,de)-\int_0^t
      g(s,\theta_s)ds-A^1_t+K^1_t. 
    \end{align*}
    Then, we compare the forward form of \eqref{eq15} and the previous
    equality, we get
    \begin{align*}
      (A_t-K_t)-(A^1_t-K^1_t)=\int_0^t (Z_s-Z^1_s) dW_s+\int_0^t \int_{\R^*}(U_s(e)-U^1_s(e)) \tilde{N}(ds,de)
    \end{align*}
and then $Z_t=Z^1_t$, $U_t=U^1_t$ and $K_t-A_t=K^1_t-A^1_t$. By
    using the properties of the Snell envelope in \eqref{eq20} (see Proposition \ref{SNELL}), we get the
    $X^+ \ge X^-+\tilde{\xi}$ and $X^- \ge X^+-\tilde{\zeta}$, which leads to
    \begin{align*}
      \xi=M+\tilde{\xi} \le Y=M+X^+-X^- \le M+\tilde{\zeta}=\zeta
    \end{align*}
    and $(iii)$ follows.\\
    It remains to check $(iv)$. By Proposition \ref{DOOB-MAYER}, we get
    that
    \begin{align*}
      0=\int_0^T(X^+_{t^-}-(\tilde{\xi}_{t^-}+X^-_{t^-}))dA^1_t=\int_0^T
      (X^+_{t^-}-X^-_{t^-}-\xi_{t^-}+M_{t^-})dA^1_t=\int_0^T (Y_{t^-}-\xi_{t^-})dA^1_t,
    \end{align*}
    and
    \begin{align*}
      0=\int_0^T(X^-_{t^-}-(X^+_{t^-}-\tilde{\zeta}_{t^-}))dK^1_t=\int_0^T
      (X^-_{t^-}-X^+_{t^-}+\zeta_{t^-}-M_{t^-})dK^1_t=\int_0^T (\zeta_{t^-}-Y_{t^-})dK^1_t,
    \end{align*}
    which ends the proof.
  \end{proof}
  
  \subsubsection{Proof of Proposition \ref{prop1}}\label{sect:proof_prop1}
In order to prove the convergence of $(Y^{p},Z^p,U^p,\alpha^p)$, we rewrite
\eqref{eq17}, the solution of the reflected BSDE with one lower obstacle $\xi$
\begin{align*}
  Y^{p,\infty}_t=&\xi+\int_t^T
  g(s,\theta^{p,\infty}_s)ds+(A^{p,\infty}_T-A^{p,\infty}_t)-p\int_t^T
  (\zeta_s-Y^{p,\infty}_s)^- ds -\int_t^T Z^{p,\infty}_s dW_s \\
  &-\int_t^T
  \int_{\R^*} U^{p,\infty}_s(e) \tilde{N}(ds,de),
\end{align*}

and \eqref{eq16}, the solution of the reflected BSDE with one upper obstacle $\zeta$

\begin{align*}
  Y^{\infty,p}_t=&\xi+\int_t^T
  g(s,\theta^{\infty,p}_s)ds+p\int_t^T
  (Y^{\infty,p}_s-\xi_s)^- ds-(K^{\infty,p}_T-K^{\infty,p}_t) -\int_t^T
  Z^{\infty,p}_s dW_s \\
  &-\int_t^T
  \int_{\R^*} U^{\infty,p}_s(e) \tilde{N}(ds,de).
\end{align*}

Since $Y^{p,\infty}_t \ge \xi_t$ and $Y^{\infty,p}\le \zeta_t$, we can
substract $p\int_t^T(Y^{p,\infty}_s-\xi_s)^-ds$ to the first BSDE and we can
add $p\int_t^T(\zeta_s-Y^{\infty,p}_s)^-ds$ to the second BSDE. By the comparison theorem we get
$Y^{\infty,p}_t\le Y^p_t \le Y^{p,\infty}_t$. Since $Y^{p,\infty} \searrow Y$
and $Y^{\infty,p} \nearrow Y$ when $p \rightarrow \infty$, we get that
$Y^{p}_t \rightarrow Y_t$ almost surely, for all $t \in [0,T]$. From
\eqref{eq24} and the corresponding result for $Y^{\infty,p}$, we get that
$\lim_{p \rightarrow \infty} \E(\int_0^T |Y^p_s - Y_s|^2 ds ) =0$.\\
Applying It{\^o}'s formula to $\E(|Y^p_t-Y_t|^2)$ between $[\sigma,\tau]$, a pair
of stopping times such that $t \le \sigma \le \tau \le T$, we get
\begin{align*}
 & \E\left(|Y^p_{\sigma}-Y_\sigma|^2 + \int_{\sigma}^{\tau} |Z^p_s-Z_s|^2 ds +
  \int_{\sigma}^{\tau} \int_{\R^*} |U^p_s(e)-U_s(e)|^2 \nu(de) ds\right) =
\E(|Y^p_{\tau}-Y_\tau|^2 )\\
&+ 2\E(\int_{\sigma}^{\tau}
  (Y^p_s-Y_s)(g(s,\theta^p_s)-g(s,\theta_s))ds) + \sum_{\sigma\le s \le \tau}
  (\Delta_s A)^2 + \sum_{\sigma\le s \le \tau}
  (\Delta_s K)^2 +2\sum_{\sigma\le s \le \tau} \Delta_s A \Delta_s K \\
  &+2\int_{\sigma}^{\tau} (Y^p_s-Y_s) d(A^p-A)_s -2\int_{\sigma}^{\tau} (Y^p_s-Y_s) d(K^p-K)_s.
\end{align*} By using the Cauchy-Schwarz inequality, the convergence of $Y^p$
to $Y$ in $\H^2$, and  the fact that $g(s,\theta^p_s)$ and $g(s,\theta_s)$ are
bounded in $L^2(\Omega \times[0,T])$, we get that the second term of the
r.h.s. tends to zero when $p$ tends to $\infty$. From the dominated convergence
theorem the last two terms of the r.h.s. also tend to zero. Since
$2\sum_{\sigma\le s \le \tau} \Delta_s A \Delta_s K \le \sum_{\sigma\le s \le
  \tau} (\Delta_s A)^2 + \sum_{\sigma\le s \le \tau} (\Delta_s K)^2$, we are
 back to Theorem \ref{Ess08:th3.1}, which ends the proof of \eqref{eq25}.\\

It remains to prove that $Z^p$ weakly converges to $Z$ in $\H^2$, $U^p$ weakly
converges to $U$ in $\H^2_{\nu}$ and $\alpha^p_t$ weakly converges to $\alpha$ in $L^2(\mF_t)$. Since
$Y^{\infty,p}_t\le Y^p_t \le Y^{p,\infty}_t$, we get $A^p_t \le
A^{\infty,p}_t$ and $K^p_t \le K^{p,\infty}_t$. Then, by using Lemmas
\ref{lem1} and \ref{lem2}, we obtain $\E((A^p_T)^2) + \E((K^p_T)^2) \le C $,
where $C$ does not depend on $p$. By applying It{\^o}'s formula to $|Y^p_t|^2$ and
by using Young's inequality as in \eqref{eq30} we get $\E(\int_0^T |Z^p_t|^2 dt + \int_0^T (\int_{\R^*}
|U^p_s(e)|^2 \nu(de) ds))\le C$, where $C$ does not depend on $p$. The
sequences $(Z^p)_{p \ge 0}$, $(U^p)_{p \ge 0}$, $(A^p_t)_{p \ge 0}$ and
$(K^p_t)_{p \ge 0}$ are bounded in the respective spaces $\H^2$, $\H^2_{\nu}$,
$L^2(\mF_t)$ and $L^2(\mF_t)$. Then, we can extract subsequences which weakly
converge in the related spaces. Let us denote $Z',U',A'$ and $K'$ the
respective limits. Since $(Z^p,U^p)$ strongly
converge to $(Z,U)$ for any $q<2$ (see \eqref{eq25}), we get that $Z=Z'$ and $U=U'$.\\

Let us prove that $A'-K'=A-K$.
We have
\begin{align*}
  A^p_t-K^p_t&=Y^p_0-Y^p_t -\int_0^t g(s,\theta^p_s)ds + \int_0^t Z^p_s dW_s
  +\int_0^t \int_{\R^*} U^p_s(e) \tilde{N}(ds,de),\\
  A_t-K_t&=Y_0-Y_t -\int_0^t g(s,\theta_s)ds + \int_0^t Z_s dW_s
  +\int_0^t \int_{\R^*} U_s(e) \tilde{N}(ds,de).\\
\end{align*}
Taking the limit in $p$ in the first equation, we get
$A'_t-K'_t=A_t-K_t$.

\section{Numerical simulations}

In this section, we illustrate the convergence of our scheme with two examples. The difficulty in the choice of examples is given by the hypothesis we assume,  in particular the Mokobodzi's condition which is difficult to check in practice.\\

\textbf{Example 1 : inaccessible jumps}\\

We consider the simulation of the solution of a DBBSDE with obstacles having only totally inaccessible jumps. More precisely, we take the barriers and driver of the following form: $\xi_t:={(W_t)}^2+\Tilde{N}_t+(T-t), \zeta_t:={(W_t)}^2+\Tilde{N}_t+3(T-t), g(t,\omega,y,z,u):=-5|y+z|+6u-1$.\\

\quad Our example satisfies the assumptions assumed in the theoretical part,
in particular Hypotheses \ref{hypo2} and
\ref{hypo6} (see Remark \ref{rem15}, point $2.$). Assumption \eqref{hypo2}, which represents the Mokobodzki's
condition, is fulfilled, since $H_t:={(W_t)}^2+\Tilde{N}_t+2(T-t)$ satisfies
$\xi_t \leq H_t \leq \zeta_t$ and $H_t=M_t+A_t$, where
$M_t:={(W_t)}^2+\Tilde{N}_t+T-t$ is a martingale and $A_t:=T-t$  is a decreasing finite variation process. \\

\quad Table \ref{tab1} gives the values of $Y_0$ with respect to parameters
$n$ and $p$ of our explicit sheme. We notice that the algorithm converges
quite fast in $p$ and $n$. However, when $n$ is too small ($n=20$ and $n=50$), the result for
$p=20000$ is quite far from the ``reference'' result ($n=600$ and
$p=20000$). Concerning the computational time, we notice that it is low, even
for big values of $p$ and $n$.

\begin{table}[htbp]
\caption{\small The solution $\overline{y}^{p,n}$ at time $t=0$}\label{tab1}
\centerline{
\begin{tabular}{|l|l|l|l|l|l|l|l|}
  \hline
$Y_0^{p,n}$& n=20 & n=50 & n=100 & n=200 & n=400& n=500& n=600\\
\hline
\hline
p=20& 1.1736 & 1.2051 & 1.2181&1.2245&1.2277&1.2283& 1.2288\\
  \hline
 p=50& 1.2077 & 1.2482 & 1.2648&1.2728&1.2767&1.2775& 1.2780\\
  \hline
p=100& 1.2214 & 1.2634 & 1.2808&1.2894&1.2936&1.2945& 1.2950\\
 \hline
p=500& 1.2350 & 1.2753 & 1.2939&1.3033&1.3079&1.3088& 1.3094\\
 \hline
p=1000& 1.2365 & 1.2767 & 1.2957&1.3051&1.3098&1.3107&1.3113\\
 \hline
p=5000&1.2376 & 1.2778 & 1.2971&1.3066&1.3113&1.3122&1.3129\\
 \hline
p=20000&1.2377 & 1.2780 & 1.2974&1.3069&1.3116&1.3125&1.3132\\
\hline
\hline
CPU time for p=20000 & 0.00071 & 0.0084  &0.0644&0.6622&6.3560&12.5970&20.0062\\
 \hline
\end{tabular}}
\end{table}

\quad  Figure \ref{img1} represents one path of
$(\overline{y}^{p,n}_t,\overline{\xi}^n_t,\zeta^n_t)_{t\ge 0}$. We notice that
for all $t$, $\overline{y}^{p,n}_t$ stays between the two obstacles.

\begin{figure}[htbp]
\centerline{
\includegraphics[width=0.75\textwidth,height=6cm]{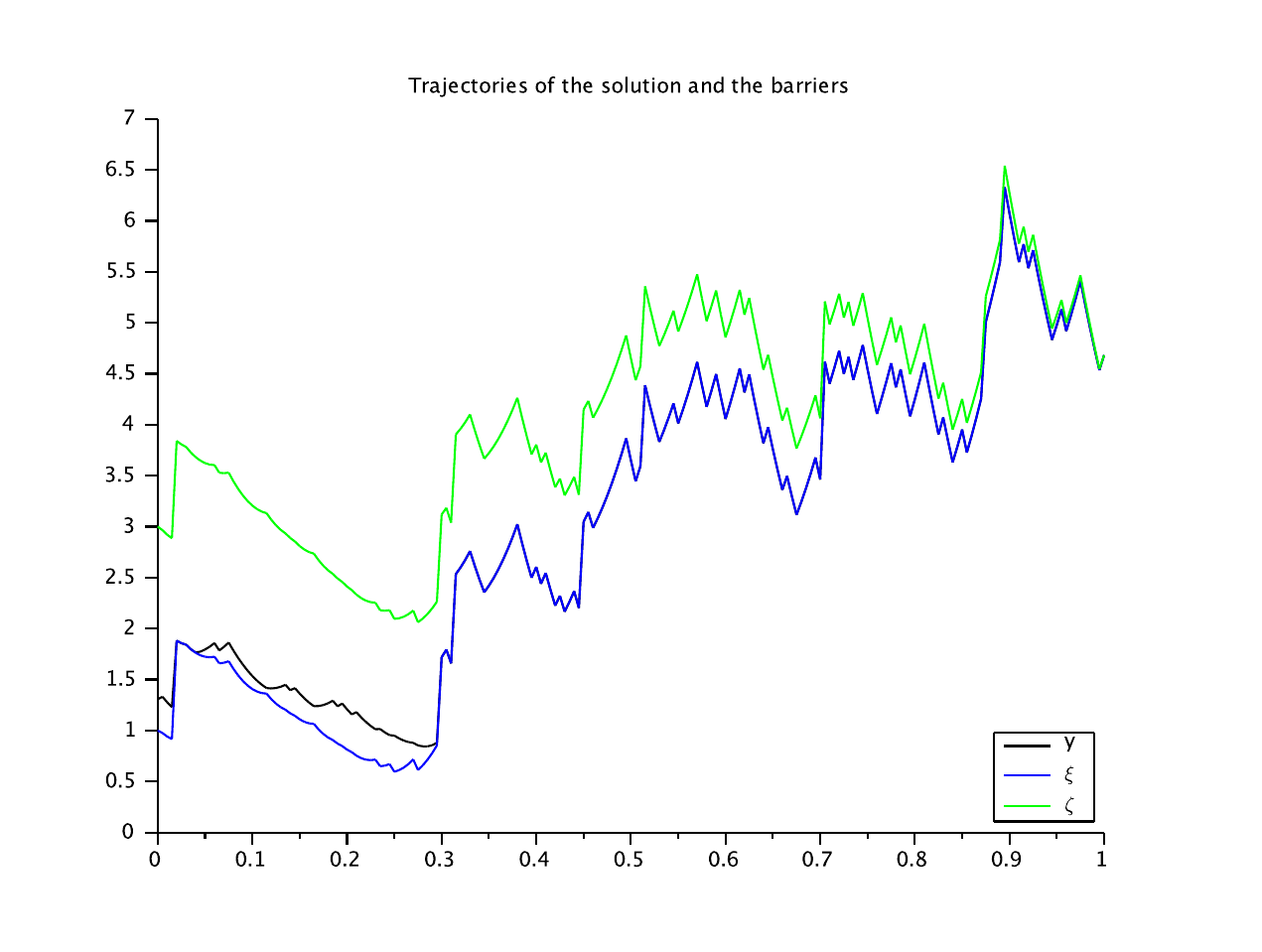}}
\caption{\small Trajectories of the solution $\overline{y}^{p,n}$ and the barriers $\overline{\xi}^n$ and $\overline{\zeta}^n$ for  $\lambda=5$, $N=200$, $p=20000$.}
\label{img1}
\end{figure}

\textbf{Example 2 : predictable and totally inaccessible jumps}\\

We consider now the simulation of the DBBSDE with obstacles having general
jumps (totally inaccessible and predictable). More precisely, we take the
barriers and driver of the following form:
$\xi_t:={(W_t)}^2+\Tilde{N}_t+(T-t)(1-\textbf{1}_{W_t \geq a}),
\zeta_t:={(W_t)}^2+\Tilde{N}_t+(T-t)(2+\textbf{1}_{W_t \geq a})$, $g(t,\omega,y,z,u):=-5|y+z|+6u-1$.  \\
We first give the numerical results for two different values of $a$, in order
to show the influence of the predictable jumps given by $\textbf{1}_{W_t \geq
  a}$ on the solution $Y$ and also the convergence in $n$ and $p$ of the
numerical explicit scheme (see Tables \ref{tab2} and \ref{tab3}).\\
\quad Then, Figures $\ref{img2}$, $\ref{img3}$ and $\ref{img4}$
allow to distinguish the predictable jumps of totally inaccesible ones and their
influence on the barriers (for e.g. the first jump of the barriers is totally
inaccessible, the second and third ones are predictable). Moreover, we remark,
as in the previous example, that the solution $Y$ stays between the two
obstacles $\xi$ and $\zeta$.

\begin{table}[htbp]
\caption{\small The solution $Y$ at time $t=0$ for a=-1}\label{tab2}
\centerline{
\begin{tabular}{|l|l|l|l|l|l|}
  \hline
 $Y_0^{p,n}$& n=100 & n=200 & n=400& n=500& n=600\\
  \hline
 p=20& 1.0745 & 1.0698& 1.0782& 1.0748& 1.0759 \\
  \hline
p=50& 1.1138 & 1.1103& 1.1191& 1.1159& 1.1170 \\
  \hline
p=100& 1.1266 & 1.1238& 1.1328& 1.1297& 1.1308 \\
  \hline
p=500& 1.1373 & 1.1353& 1.1448& 1.1419& 1.1431 \\
  \hline
p=1000& 1.1387 & 1.1369& 1.1465& 1.1437& 1.1449 \\
  \hline
p=5000& 1.1399 & 1.1382& 1.1481& 1.1453& 1.1466\\
  \hline
p=20000& 1.1401 & 1.1385& 1.1484& 1.1456& 1.1469\\
  \hline
\end{tabular}}
\end{table}

\begin{table}[htbp]
\caption{\small The solution $Y$ at time $t=0$ for a=1}\label{tab3}
\centerline{
\begin{tabular}{|l|l|l|l|l|l|}
  \hline
 $Y_0^{p,n}$& n=100 & n=200 & n=400& n=500& n=600\\
  \hline
 p=20& 1.2125 & 1.2177& 1.2203& 1.2208& 1.2212 \\
  \hline
p=50& 1.2582 & 1.2647& 1.2680& 1.2686& 1.2690 \\
  \hline
p=100& 1.2738 & 1.2808& 1.2843& 1.2850& 1.2855 \\
  \hline
p=500& 1.2866 & 1.2944& 1.2982& 1.2990& 1.2995 \\
  \hline
p=1000& 1.2884 & 1.2962& 1.3001& 1.3008& 1.3013 \\
  \hline
p=5000& 1.2898 & 1.2976 & 1.3016& 1.3023& 1.3029\\
  \hline
p=20000& 1.2900 & 1.2979& 1.3018& 1.3026& 1.3032\\
  \hline
\end{tabular}}
\end{table}

\begin{figure}[htbp]
\centerline{
\includegraphics[width=0.75\textwidth,height=5.7cm]{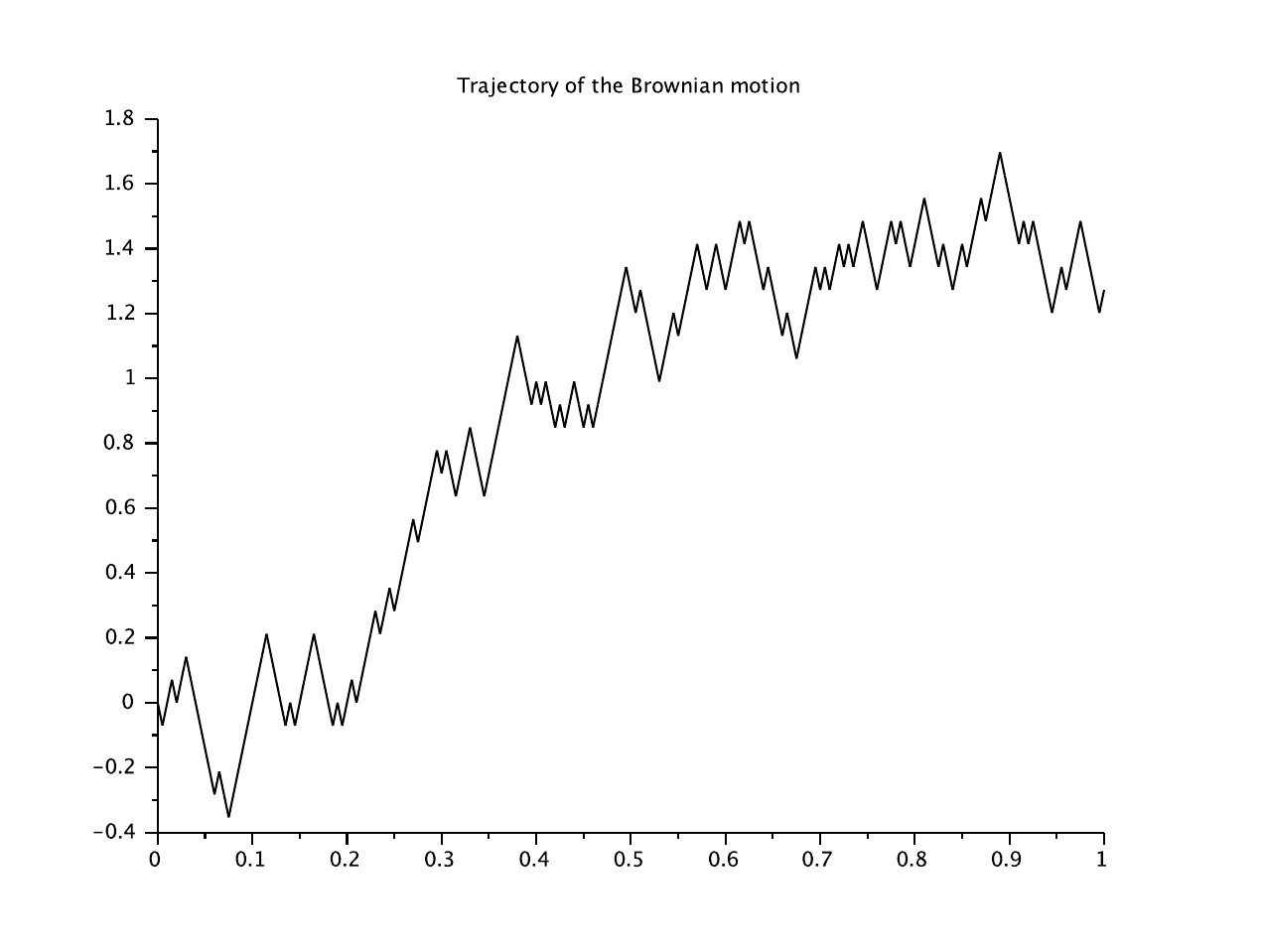}}
\caption{\small Trajectories of the Brownian motion for $a=-0.2$, $N=200$.}
\label{img2}
\end{figure}

\begin{figure}[htbp]
\centerline{
\includegraphics[width=0.75\textwidth,height=5.7cm]{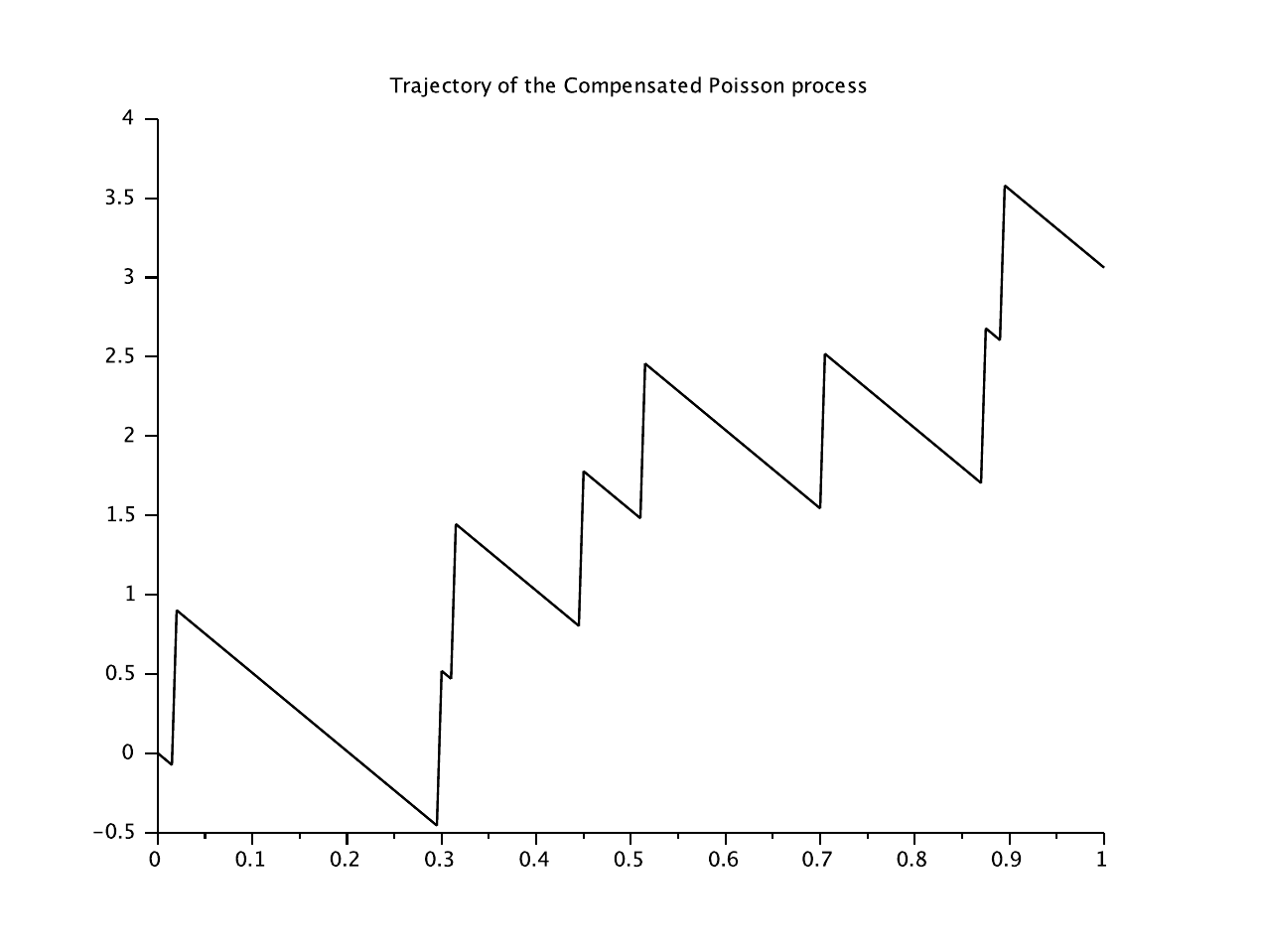}}
\caption{\small Trajectories of the  Compensated Poisson process for $\lambda=5$, $N=200$.}
\label{img3}
\end{figure}

\begin{figure}[htbp]
\centerline{
\includegraphics[width=0.75\textwidth,height=5.7cm]{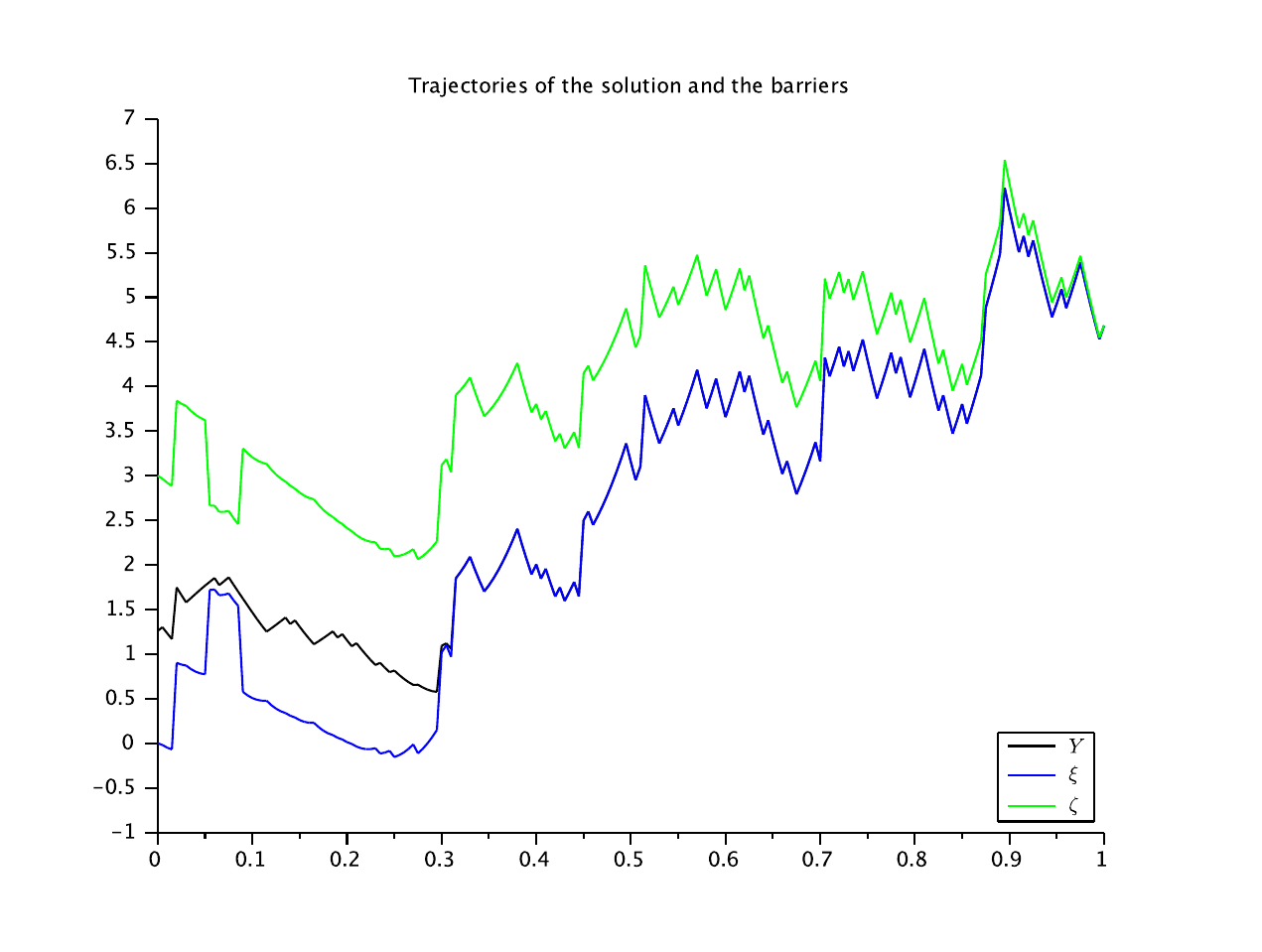}}
\caption{\small Trajectories of the solution $Y$ and the barriers $\xi$ and $\zeta$ for a=-0.2, $\lambda=5$, $N=200$.}
\label{img4}
\end{figure}

\appendix

\section{Generalized monotonic limit theorem}

The following Theorem generalizes \cite[Theorem 3.1]{PX05} and Theorem \ref{Ess08:th3.1} to the case of
doubly reflected BSDEs with jumps.

\begin{theorem}[Monotonic limit theorem]\label{thm2}
  Assume that $g$ satisfies Assumption \ref{hypo1}, and $\xi$ belongs to
  $L^2(\mF_T)$. We consider the following sequence (in $n$) of BSDEs :
  \begin{align*}
    Y^n_t=\xi+\int_t^T g(s,Y^n_s,Z^n_s,U^n_s) ds +(A^n_T-A^n_t)-(K^n_T-K^n_t)-\int_t^T Z^n_s dW_s-\int_t^T \int_{\R^*} U^n_s(e)\tilde{N}(ds,de)
  \end{align*}
  such that $Y^n \in \mS^2$, $A^n$ and $K^n$ are in $\mA^2$, and $\sup_n
  \E(\int_0^T |Z^n_s|^2 ds) +\sup_n \E(\int_0^T \int_{\R^*} |U^n_s(e)|^2
  \nu(de) ds) < \infty$. We also assume that for each $n \in \N$
  \begin{enumerate}
  \item $(A^n)_n$ is continuous and increasing and such that $A^n_0=0$ and $\sup_n \E((A^n_T)^2) < \infty$
  \item $K^j_t-K^j_s \ge K^i_t - K^i_s$, for all $0 \le s \le t \le T$ and for
    all $i \le j$
  \item for all $t \in [0,T]$, $(K^n_t)_n \nearrow K_t$ and  $E(K^2_T)<\infty$
  \item $(Y^n_t)_n$ increasingly converges to $Y_t$ with $\E(\sup_{0 \le t \le
      T} |Y_t|^2)< \infty$.
  \end{enumerate}
  Then $K \in \mA^2$ and there exist $Z \in \H^2$, $A \in \mA^2$ and $U \in \H^2_{\nu}$ such
  that 
  \begin{align*}
     Y_t=\xi+\int_t^T g(s,Y_s,Z_s,U_s) ds
    +A_T-A_t-(K_T-K_t)-\int_t^T Z_s dW_s-\int_t^T \int_{\R^*} U_s(e)\tilde{N}(ds,de).  
  \end{align*}
  $Z$ is the weak limit of $(Z^n)_n$ in $\H^2$, $K_t$ is the strong limit of
  $(K^n_t)_n$ in $L^2(\mF_t)$, $A_t$ is the weak limit of $(A^n_t)_n$
  in $L^2(\mF_t)$ and $U$ is the weak limit of $(U^n)_n$ in
  $\H^2_{\nu}$. Moreover, for all $r \in [1,2[$, the following strong
  convergence holds
  \begin{align*}
   \lim_{n \rightarrow \infty} \E\left (\int_0^T |Y^n_s-Y_s|^2 ds +\int_0^T |Z^n_s-Z_s|^r ds + \int_0^T\left(
      \int_{\R^*} |U^n_s(e)-U_s(e)|^2 \nu(de)\right)^{\frac{r}{2}}ds \right)=0.
  \end{align*}
\end{theorem}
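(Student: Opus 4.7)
The plan is to adapt the monotonic limit argument of Peng--Xu, together with its jump-case extension by Essaky, to the present setting with two increasing processes. First I would apply It\^{o}'s formula to $|Y^n_t|^2$, use the Lipschitz property of $g$ with Young's inequality, and exploit the hypotheses $\sup_n \E((A^n_T)^2+(K^n_T)^2)<\infty$ and $\E\sup_t|Y_t|^2<\infty$ to obtain a uniform $L^2$-bound on $(Z^n,U^n)$. Banach--Alaoglu then yields subsequential weak limits $Z^n \rightharpoonup Z$ in $\H^2$, $U^n \rightharpoonup U$ in $\H^2_{\nu}$, and $A^n_t \rightharpoonup A_t$ in $L^2(\mF_t)$ for each $t$; monotone convergence delivers $K^n_t \to K_t$ strongly in $L^2(\mF_t)$, and the bound $|Y^n_t-Y_t|\le 2\sup_s|Y_s|\in L^2$ combined with pointwise convergence yields $Y^n\to Y$ strongly in $L^2([0,T]\times\Omega)$.

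Next I would rewrite the BSDE in forward form and pass to the limit term by term: the stochastic integrals converge weakly in $L^2(\mF_t)$, the driver term converges by the Lipschitz property of $g$ together with the strong $L^2$-convergence of $Y^n$ and the weak convergences of $Z^n,U^n$, and $A^n_t$ converges weakly to $A_t$. This identifies the limit equation. Monotonicity of $A$ follows by taking the weak $L^2$-limit of $A^n_t-A^n_s\ge 0$ for $s\le t$ (the positive cone being closed and convex); predictability passes to weak $L^2$-limits of predictable processes, so $A\in\mA^2$, and analogously $K\in\mA^2$.

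To upgrade $(Z^n,U^n)$ to strong $L^r$-convergence for $r<2$, I would apply It\^{o}'s formula to $|Y^n-Y|^2$ on $[0,T]$. The resulting identity produces $\E\int_0^T|Z^n-Z|^2\,ds$ and $\E\int_0^T\!\!\int_{\R^*}|U^n-U|^2\,\nu(de)\,ds$ on the left, and on the right a driver cross-term controlled by Lipschitz continuity and Young's inequality, together with the Stieltjes integrals
\[
2\E\!\int_{0}^{T}(Y^n_{s^-}-Y_{s^-})\,d(A^n-A)_s \;-\; 2\E\!\int_{0}^{T}(Y^n_{s^-}-Y_{s^-})\,d(K^n-K)_s,
\]
and the pure-jump sums $\E\sum(\Delta A-\Delta A^n)^2$, $\E\sum(\Delta K-\Delta K^n)^2$, plus a cross term $2\E\sum\Delta A\,\Delta K$. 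Since $Y^n\le Y$ and $K^n\le K$, the $K$-Stieltjes integral is non-positive and may be discarded; the $A$-Stieltjes term vanishes in the limit by dominated convergence using the uniform $L^2$-bound on $A^n_T$ together with the $L^2$-convergence of $Y^n$; and the cross-jump contribution is absorbed via $2\Delta A\,\Delta K \le (\Delta A)^2+(\Delta K)^2$. Once these reductions are in place the analysis reduces to that of \cite[proof of Theorem 3.1]{Ess08}, and strong $L^r$-convergence for $r<2$ follows by H\"older's inequality from the $L^2$-convergence of $Y^n$ combined with the uniform $L^2$-bounds on $(Z^n,U^n)$.

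The main obstacle is the jump structure: although each $A^n$ is continuous, its weak $L^2$-limit $A$ may acquire jumps, so the pure-jump contributions in the It\^{o} identity cannot simply be dropped as in the continuous Peng--Xu setting. Controlling these terms, and in particular the cross-jump term between $A$ and $K$, through the sign conditions provided by the monotone convergences $Y^n\nearrow Y$ and $K^n\nearrow K$, is the delicate step that distinguishes the doubly reflected jump case from the singly reflected one treated in \cite{Ess08}.
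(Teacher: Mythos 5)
Your overall architecture coincides with the paper's: uniform $L^2$ bounds, weak subsequential limits for $(Z^n,U^n,A^n_t)$, strong limits for $Y^n$ and $K^n_t$, passage to the limit in the forward equation, then It\^{o} on $|Y^n-Y|^2$ over predictable subintervals with the sign conditions coming from the monotonicity hypotheses, and a final reduction to \cite[Equation (10)]{Ess08}. However, there is one genuine gap in your identification of the limit equation. You pass to the limit in the driver term $\int_0^t g(s,Y^n_s,Z^n_s,U^n_s)\,ds$ ``by the Lipschitz property of $g$ together with the strong $L^2$-convergence of $Y^n$ and the weak convergences of $Z^n,U^n$.'' This does not work: for a nonlinear Lipschitz $g$, weak convergence $Z^n\rightharpoonup Z$ in $\H^2$ gives no control on $\E\int_0^t|g(s,\cdot,Z^n_s,\cdot)-g(s,\cdot,Z_s,\cdot)|\,ds$ (take $g(z)=|z|$ and an oscillating $Z^n$ converging weakly to $0$). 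At the stage where you invoke this, the strong convergence of $(Z^n,U^n)$ is not yet available — it is precisely the conclusion of the subsequent It\^{o} step. The paper's remedy is to observe that $(g(\cdot,Y^n,Z^n,U^n))_n$ is itself bounded in $L^2([0,T]\times\Omega)$, extract a weak limit $g_0$, write the limit equation with $\int_t^T g_0(s)\,ds$ in place of the driver, run the It\^{o} argument against that equation (the cross term $(Y^n_s-Y_s)(g(s,\theta^n_s)-g_0(s))$ is then handled by Cauchy--Schwarz and the strong $L^2$-convergence of $Y^n$), and only \emph{afterwards} identify $\int_0^t g_0(s)\,ds=\int_0^t g(s,Y_s,Z_s,U_s)\,ds$ from the strong $L^r$-convergence of $(Z^n,U^n)$, $r<2$. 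Your proof needs this intermediate object; without it the limit equation is not established and the subsequent It\^{o} computation has no equation for $Y$ to work against.

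Two smaller points. First, your justification for discarding the $K$-Stieltjes term (``since $Y^n\le Y$ and $K^n\le K$'') is not the right reason: $K^n_t\le K_t$ pointwise does not make $d(K^n-K)_s$ a nonpositive measure. What is needed is hypothesis~2 (monotonicity of the \emph{increments} $K^j_t-K^j_s$ in $j$), which after passing to the limit shows $K-K^n$ is nondecreasing, whence $(Y^n_{s^-}-Y_{s^-})\,d(K^n-K)_s\ge 0$. Second, the closing claim that the strong $L^r$-convergence of $(Z^n,U^n)$ ``follows by H\"older's inequality from the $L^2$-convergence of $Y^n$ combined with the uniform $L^2$-bounds'' is not accurate as stated; it follows from the It\^{o} inequality you derived, via the covering of $[0,T]$ by predictable intervals on which the jumps of $A$ are small (the Peng--Essaky argument), with H\"older used only on the exceptional set. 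Both points are repairable and the second is essentially what the reference to \cite{Ess08} supplies, but the driver-identification gap above is the one that must be fixed for the proof to stand.
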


\begin{proof}[Proof of Theorem \ref{thm2}]
  This proof follows the proofs of Theorem \ref{Ess08:th3.1} and \cite[Theorem
  3.1]{PX05}. From the hypotheses, the sequences $(Z^n)_n$, $(U^n)_n$ and
  $(g(\cdot,Y^n,Z^n,U^n))_n$ are bounded in $\H^2$, $\H^2_{\nu}$ and $L^2([0,T]
  \times \Omega)$, then we can extract subsequences which weakly converge in
  the related spaces. Let $Z$, $U$ and $g_0$ denote the respective weak limits. Thus,
  for each stopping time $\tau \le T$, the following weak convergence holds in
  $L^2(\mF_{\tau})$
  \begin{align*}
    \int_0^{\tau} g(s,Y^n_s,Z^n_s,U^n_s)ds \underset{n
      \rightarrow \infty}{\rightharpoonup} \int_0^{\tau}
    g_0(s) ds,\;\; \int_0^{\tau} Z^n_s dW_s \underset{n
      \rightarrow \infty}{\rightharpoonup} \int_0^{\tau} Z_s dW_s
  \end{align*}
  and
  \begin{align*}
    \int_0^{\tau} \int_{\R^*} U^n_s(e) \tilde{N}(ds,de) \underset{n
      \rightarrow \infty}{\rightharpoonup} \int_0^{\tau} \int_{\R^*} U_s(e)
    \tilde{N}(ds,de), \;\; K^n_{\tau}  \underset{n
      \rightarrow \infty}{\rightharpoonup} K_{\tau}
  \end{align*}
  since $(K^n_t)_n \nearrow K_t$ in $L^2(\mF_t)$.
  \begin{align*}
    A^n_{\tau}=Y^n_0-Y^n_{\tau} -\int_0^{\tau} g(s,Y^n_s,Z^n_s,U^n_s) ds +K^n_{\tau}+\int_0^{\tau} Z^n_s dW_s+\int_0^{\tau} \int_{\R^*} U^n_s(e)\tilde{N}(ds,de)
  \end{align*}
  we also have the following weak convergence in $L^2(\mF_{\tau})$
  \begin{align*}
    A^n_{\tau}\rightharpoonup A_{\tau}:=Y_0-Y_{\tau} -\int_0^{\tau} g_0(s) ds +K_{\tau}+\int_0^{\tau} Z_s dW_s+\int_0^{\tau} \int_{\R^*} U_s(e)\tilde{N}(ds,de).
  \end{align*}
  Then $\E(A_T^2)< \infty$. Since the process $(A^n_t)_t$ is increasing,
  predictable and such that $A^n_0=0$, the limit process $A$ remains an
  increasing predictable process with $A_0=0$. We deduce from \cite[Lemma
  3.2]{PX05} that $K$ is a RCLL process, and from \cite[Lemma 3.1]{PX05} that $A$ and
  $Y$ are RCLL processes. Then $Y$ has the form
  \begin{align*}
     Y_t=\xi+\int_t^T g_0(s) ds
    +A_T-A_t-(K_T-K_t)-\int_t^T Z_s dW_s-\int_t^T \int_{\R^*} U_s(e)\tilde{N}(ds,de).  
  \end{align*}
  It remains to prove that for all $r \in [1,2[$
  \begin{align*}
   \lim_{n \rightarrow \infty} \E\left (\int_0^T |Z^n_s-Z_s|^r ds + \int_0^T\left(
      \int_{\R^*} |U^n_s(e)-U_s(e)|^2 \nu(de) \right)^{\frac{r}{2}}ds \right)=0
\end{align*}
and for all $t\in[0,T]$
\begin{align*}
  \int_0^t g_0(s) ds = \int_0^t g(s,Y_s,Z_s,U_s)ds.
\end{align*}
Let $N_t=\int_0^t \int_{\R^*} U_s(e) \tilde{N}(ds,de)$ and $N^n_t=\int_0^t
\int_{\R^*} U^n_s(e) \tilde{N}(ds,de)$. We have $\Delta_s (Y^n -
Y)=\Delta_s(N^n-N+K^n - K +A)$. We appply It{\^o}'s formula to $(Y^n_t-Y_t)^2$ on
each subinterval $]\sigma,\tau]$, where $\sigma$ and $\tau$ are two
predictable stopping times such that $0 \le \sigma \le \tau \le T$. Let
$\theta^n_s$ denotes $(Y^n_s,Z^n_s,U^n_s)$
\begin{align*}
  &(Y^n_{\sigma}-Y_{\sigma})^2+ \int_{\sigma}^{\tau} |Z^n_s-Z_s|^2
  ds+\sum_{\sigma \le s \le \tau} \Delta_s  (Y^n-Y)^2\\
  &=(Y^n_{\tau}-Y_{\tau})^2+2\int_{\sigma}^{\tau}
  (Y^n_s-Y_s)(g(s,\theta^n_s)-g_0(s))ds
  +2 \int_{\sigma}^{\tau} (Y^n_s-Y_s)
  dA^n_s-2\int_{\sigma}^{\tau} (Y^n_{s^-}-Y_{s^-})
  dA_s\\
  &-2\int_{\sigma}^{\tau}(Y^n_{s^-}-Y_{s^-})d(K^n_s-K_s)
  -2\int_{\sigma}^{\tau} (Y^n_{s^-}-Y_{s^-})(Z^n_s-Z_s)dW_s-2\int_{\sigma}^{\tau}(Y^n_{s^-}-Y_{s^-})(U^n_s(e)-U_s(e))\tilde{N}(ds,de).
\end{align*}

Since $\int_{\sigma}^{\tau} (Y^n_s-Y_s) dA^n_s \le 0$,
$-2\int_{\sigma}^{\tau}(Y^n_{s^-}-Y_{s^-})d(K^n_s-K_s) \le 0$ and 
\begin{align*}
  \sum_{\sigma \le s \le \tau} \Delta_s
  (Y^n-Y)^2=\sum_{\sigma \le s \le \tau} \Delta_s (N^n-N)^2 + \sum_{\sigma \le
    s \le \tau} \Delta_s (K^n-K)^2+\sum_{\sigma \le s \le \tau} (\Delta_s
  A)^2+2 \sum_{\sigma \le s \le \tau} \Delta_s A \Delta_s (K^n-K).
\end{align*}
By taking expectation and using
$Y^n_{s^-}-Y_{s^-}=(Y^n_s-Y_s)-\Delta_s(Y^n-Y)$, we get 
\begin{align*}
  &\E(Y^n_{\sigma}-Y_{\sigma})^2+ \E\int_{\sigma}^{\tau} |Z^n_s-Z_s|^2
  ds+\E\int_{\sigma}^{\tau}\int_{\R^*} |U^n_s(e)-U_s(e)|^2\nu(de)
  ds+\E\sum_{\sigma \le s \le \tau} \Delta_s
  (K^n-K)^2\\
  &\le \E(Y^n_{\tau}-Y_{\tau})^2+2\E\int_{\sigma}^{\tau}
  (Y^n_s-Y_s)(g(s,\theta^n_s)-g_0(s))ds
  -2\E\int_{\sigma}^{\tau} (Y^n_{s}-Y_{s})dA_s+\E \sum_{\sigma \le s \le \tau}
  (\Delta_s A)^2.
\end{align*}
It comes down to \cite[Equation (10)]{Ess08}, we refer to this paper for the
end of the proof. 
\end{proof}

\section{Snell envelope theory}\label{sect:snell}
\begin{definition}
Any $\mF_t$-adapted RCLL process $\eta=(\eta_t)_{0 \le t \le T}$ is of class
$\mD[0,T]$ if the family $\{ \eta(\tau)\}_{\tau \in \mT_0}$ is uniformly integrable.
\end{definition}

\begin{definition}
  Let $\eta=(\eta_t)_{t \le T}$ be a $\mF_t$-adapted RCLL process of class
  $\mD[0,T]$. Its Snell envelope $\mR_t(\eta)$ is defined as
  \begin{align*}
    \mR_t(\eta)=\esssup_{\nu \in \mT_t} \E(\eta_{\nu} | \mF_t).
  \end{align*}
\end{definition}

\begin{proposition}\label{SNELL}
   $\mR_t(\eta)$ is the lowest RCLL $\mF_t$-supermartingale of class
   $\mD[0,T]$ which dominates $\eta$, i.e. $\P$-a.s., for all $t \in [0,T]$,
   $\mR(\eta)_t \ge \eta_t$. 
\end{proposition}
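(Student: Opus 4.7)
The plan is to verify in turn the four properties that characterize the Snell envelope: domination, supermartingale, class $\mathcal{D}$, and minimality. First, domination is immediate from the definition: for any $t$, the constant stopping time $\nu \equiv t$ belongs to $\mT_t$, so $\mR_t(\eta) \ge \E(\eta_t \mid \mF_t) = \eta_t$ almost surely because $\eta$ is adapted.

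Next, to prove the supermartingale property, I would fix $s \le t$ and use the crucial observation that the family $\{\E(\eta_\nu \mid \mF_t) : \nu \in \mT_t\}$ is upward directed. Indeed, given $\nu_1, \nu_2 \in \mT_t$, the set $A = \{\E(\eta_{\nu_1}\mid\mF_t) \ge \E(\eta_{\nu_2}\mid\mF_t)\} \in \mF_t$ and $\nu = \nu_1 \ind_A + \nu_2 \ind_{A^c}$ is a stopping time in $\mT_t$ whose associated conditional expectation dominates both. This lattice property allows the interchange of $\esssup$ and conditional expectation, yielding
\begin{align*}
\E\bigl(\mR_t(\eta)\mid \mF_s\bigr) = \esssup_{\nu \in \mT_t} \E\bigl(\E(\eta_\nu\mid\mF_t)\mid\mF_s\bigr) = \esssup_{\nu \in \mT_t}\E(\eta_\nu\mid\mF_s) \le \esssup_{\nu \in \mT_s}\E(\eta_\nu\mid\mF_s) = \mR_s(\eta),
\end{align*}
since $\mT_t \subset \mT_s$. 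The class $\mathcal{D}$ property follows because, for any $\tau \in \mT_0$, $\mR_\tau(\eta) = \esssup_{\nu \in \mT_\tau}\E(\eta_\nu\mid\mF_\tau)$ is dominated in $L^1$ by a conditional expectation of the uniformly integrable family $\{\eta_\nu\}_{\nu \in \mT_0}$, hence uniformly integrable itself.

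For minimality, I would take any RCLL supermartingale $X$ of class $\mathcal{D}[0,T]$ satisfying $X \ge \eta$ pointwise. The hypothesis of class $\mathcal{D}$ together with RCLL regularity allows the optional sampling theorem to apply at arbitrary bounded stopping times, so for every $\nu \in \mT_t$,
\begin{align*}
X_t \ge \E(X_\nu \mid \mF_t) \ge \E(\eta_\nu \mid \mF_t).
\end{align*}
Taking the essential supremum over $\nu \in \mT_t$ on the right yields $X_t \ge \mR_t(\eta)$, which is the required minimality.

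The main obstacle is actually producing the RCLL modification of $\mR(\eta)$, which is implicit in the proposition's wording. I would rely on the standard criterion that a supermartingale admits an RCLL modification precisely when $t \mapsto \E(\mR_t(\eta))$ is right-continuous; this in turn follows from the right-continuity of the filtration and of $\eta$ itself, combined with an approximation argument along a sequence $t_n \downarrow t$ and the class $\mathcal{D}$ bound. Once the RCLL version is fixed, all steps above apply verbatim to it, and the lattice/upward-directedness argument in the supermartingale step is the only genuinely nontrivial manipulation; everything else is direct from the defining formula.
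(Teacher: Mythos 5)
The paper does not actually prove this proposition: it is recalled as a classical fact from Snell envelope theory (the appendix only supplies a proof for the Doob--Meyer decomposition statement that follows it, citing \cite{HO13} for the optimal-stopping properties it needs there). Your argument is the standard textbook proof and is correct in outline: domination via the constant stopping time $\nu \equiv t$; the supermartingale property via the upward-directedness of the family $\{\E(\eta_\nu \mid \mF_t)\}_{\nu \in \mT_t}$, which is exactly what licenses the interchange of $\esssup$ with the conditional expectation; and minimality via optional sampling applied to any competing RCLL supermartingale of class $\mD[0,T]$. Two points in your sketch are thinner than the rest, though neither is a genuine gap. First, the class $\mD[0,T]$ verification as you phrase it (``dominated in $L^1$ by a conditional expectation of the uniformly integrable family'') does not quite parse as written; the clean version is $|\mR_\tau(\eta)| \le \esssup_{\nu \in \mT_\tau}\E(|\eta_\nu| \mid \mF_\tau)$ together with the directedness argument, which gives $\E\bigl[|\mR_\tau(\eta)|\ind_A\bigr] \le \sup_{\nu}\E\bigl[|\eta_\nu|\ind_A\bigr]$ for $A \in \mF_\tau$ and hence uniform integrability from that of $\{\eta_\nu\}_{\nu \in \mT_0}$; in the setting where the paper actually uses the Snell envelope one even has $\eta \in \mS^2$, so $|\mR_\tau(\eta)| \le \E(\sup_t |\eta_t| \mid \mF_\tau)$ settles it immediately. (This step also silently requires identifying $\mR_\tau(\eta)$ at a stopping time $\tau$ with $\esssup_{\nu \in \mT_\tau}\E(\eta_\nu \mid \mF_\tau)$, which again follows from directedness and optional sampling.) Second, you correctly isolate the existence of the RCLL modification as the real technical content; right-continuity of $t \mapsto \E(\mR_t(\eta))$ does follow from the right-continuity of $\eta$ and uniform integrability along $t_n \downarrow t$, and with the usual conditions on the filtration this yields the RCLL version to which the rest of the argument applies. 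So your proposal supplies, correctly, a proof that the paper chose to omit.
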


\begin{proposition}(Doob-Meyer decomposition  of Snell envelopes)\label{DOOB-MAYER}
 Let
  $\eta:=(\eta_t)_{t\le T}$ be of class $\mathcal{D}([0,T])$. There
  exists a unique decomposition of the Snell envelope
  \begin{align*}
    \mR_t(\eta)=M_t- K^c_t-K^d_t,
  \end{align*}
  where $M_t$ is a RCLL $\mF_t$-martingale, $K^c$ is a continuous integrable
  increasing process with $K^c_0=0$, and $K^d$ is a pure jump integrable
  increasing predictable RCLL process with $K^d_0=0$. Moreover, we have
  \begin{align*}
    \int_0^T (\mR_{t^-}(\eta)-\eta_{t^-}) dK_t=0,
  \end{align*}
  where $K:=K^c+K^d$.
\end{proposition}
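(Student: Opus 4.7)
The plan is to invoke the classical Doob--Meyer decomposition theorem for supermartingales of class $\mD[0,T]$. The preceding proposition identifies $\mR(\eta)$ as an RCLL supermartingale of class $\mD[0,T]$; hence it admits a unique decomposition $\mR_t(\eta) = M_t - K_t$, with $M$ a uniformly integrable RCLL martingale and $K$ an integrable, increasing, predictable RCLL process satisfying $K_0 = 0$. Uniqueness of $M$ and $K$ then yields uniqueness of the full decomposition stated in the proposition.

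Next I would split $K$ into its continuous and purely discontinuous parts by setting $K^d_t := \sum_{0 < s \le t} \Delta K_s$ and $K^c_t := K_t - K^d_t$. Since $K$ is predictable and increasing, $K^d$ is a predictable, pure-jump, increasing RCLL process (its jump times being exhausted by a sequence of predictable stopping times), $K^c$ is continuous and increasing, and both are null at zero and integrable by domination by $K_T$.

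For the flat-off identity $\int_0^T (\mR_{t^-}(\eta) - \eta_{t^-})\,dK_t = 0$, I would exploit the minimality of $\mR(\eta)$ among RCLL supermartingales dominating $\eta$. Fix $t \in [0,T]$ and introduce the stopping time $\tau^*_t := \inf\{s \ge t : \mR_s(\eta) = \eta_s\} \wedge T$; the classical optimality result for Snell envelopes yields $\mR_t(\eta) = \E(\eta_{\tau^*_t} \mid \mF_t)$, showing that $\mR(\eta)$ is a martingale on the stochastic interval $[t, \tau^*_t)$. By uniqueness of the Doob--Meyer decomposition, $K$ stays constant on that interval, which prevents $K^c$ from charging the predictable set $\{s : \mR_{s^-}(\eta) > \eta_{s^-}\}$. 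For the jump part $K^d$, I would argue by contradiction at a predictable stopping time $\tau$: if $\mR_{\tau^-}(\eta) > \eta_{\tau^-}$ and $\Delta K_\tau > 0$ on a set of positive probability, then, combining the minimality of $\mR(\eta)$ with the predictable section theorem, one can construct a strictly smaller RCLL supermartingale still dominating $\eta$, contradicting minimality.

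The main obstacle is the flat-off identity for the predictable jump part $K^d$: one must carefully combine the minimality of $\mR(\eta)$ with the predictable section theorem in order to transfer the pointwise information available at predictable stopping times into the integral statement, while ensuring that the candidate smaller process remains RCLL, of class $\mD$, and a supermartingale across the predictable jump. Once this is established, the continuous-part statement follows from the optimality characterization of $\tau^*_t$ and the uniqueness part of Doob--Meyer.
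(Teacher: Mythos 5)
Your first step (classical Doob--Meyer decomposition for class-$\mD$ supermartingales, then splitting $K$ into $K^c$ and $K^d:=\sum_{s\le\cdot}\Delta K_s$) matches the paper. The flat-off identity is where the proposal breaks down. The ``classical optimality result'' you invoke, namely $\mR_t(\eta)=\E(\eta_{\tau^*_t}\mid\mF_t)$ with $\tau^*_t=\inf\{s\ge t:\mR_s(\eta)=\eta_s\}$, is \emph{false} for a general RCLL process of class $\mD$: it requires $\eta$ to be left upper semicontinuous in expectation along stopping times (equivalently, regularity of the Snell envelope). A deterministic counterexample on $[0,1]$ is $\eta_t=t$ for $t<1/2$ and $\eta_t=0$ for $t\ge 1/2$: then $\mR_t(\eta)=1/2$ for $t<1/2$, so $\tau^*_0=1/2$ and $\E(\eta_{\tau^*_0})=0\ne 1/2=\mR_0(\eta)$. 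Without this optimality you cannot conclude that $K$ is constant on $[t,\tau^*_t)$; what the general theory provides is only the martingale property up to the $\lambda$-optimal times $\inf\{s\ge t:\lambda\mR_s(\eta)\le\eta_s\}$, $\lambda<1$, whose limit can fall strictly before $\tau^*_t$, and it does so precisely at points where the \emph{left limits} satisfy $\mR_{t^-}(\eta)=\eta_{t^-}$ --- which is why the statement is formulated with left limits in the first place. Your treatment of $K^d$ is likewise only a declaration of intent: constructing the ``strictly smaller supermartingale'' via the predictable section theorem is exactly the content of the inclusion $\{\Delta K^d>0\}\subset\{\mR_{t^-}(\eta)=\eta_{t^-}\}$ that has to be proved (equivalently, the identity $\mR_{\tau^-}(\eta)=\max\left(\eta_{\tau^-},\E(\mR_{\tau}(\eta)\mid\mF_{\tau^-})\right)$ at predictable $\tau$), and you acknowledge you have not carried it out.

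The paper's proof is structured precisely to avoid the false optimality claim: it first disposes of the jump part by citing the known property $\{\Delta K^d>0\}\subset\{\mR_{t^-}(\eta)=\eta_{t^-}\}$, then shows $\mR_t(\eta)+K^d_t=\mR_t(\eta+K^d)=M_t-K^c_t$, which is a \emph{regular} supermartingale because its compensator $K^c$ is continuous; for the shifted reward $\eta+K^d$ an optimal stopping time after $t$ therefore exists (the debut of the support of $dK^c$), and the flat-off for $K^c$ follows. If you want to keep your structure, you must either insert this regularity reduction before appealing to an optimal stopping time, or work with the $\lambda$-optimal times and handle separately the left-accumulation points of the contact set, where only the left-limit equality is available.
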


\begin{proof}
  The first part of the proposition corresponds to the Doob-Meyer
  decomposition of supermartingales of class $\mD[0,T]$. To prove the second
  part of the proof, we write
  \begin{align*}
    \int_0^T (\mR_{t^-}(\eta)-\eta_{t^-}) dK_t=\int_0^T (\mR_{t^-}(\eta)-\eta_{t^-}) dK^d_t+\int_0^T (\mR_{t^-}(\eta)-\eta_{t^-}) dK^c_t.
  \end{align*}
  The first term of the right hand side is null, since $\{\Delta K^d >0\} \subset
  \{\mR(\eta)_-=\eta_-\}$ (see \cite[Property A.2, (ii)]{HO13}). Let us prove
  that the
  second term of the r.h.s. is also null. We know that $(\mR_t(\eta)+K^d_t)_t=(M_t- K^c_t)_t$ is a
  supermartingale satisfying $\mR_t(\eta)+K^d_t\ge \eta_t +K^d_t$, then
  $\mR_t(\eta)+K^d_t\ge \mR(\eta_t +K^d_t)$. On the other hand, for every
  supermartingale $N_t$ such that $N_t \ge \eta_t+K^d_t$, we have $N_t-K^d_t
  \ge \eta_t$, and then $N_t-K^d_t \ge \mR(\eta)_t$ (since $(N_t-K^d_t)_t$ is
  a supermartingale), then $N_t \ge \mR(\eta)_t+K^d_t$. By choosing
  $N_t:=\mR(\eta+K^d)_t$, we get $\mR_t(\eta)+K^d_t = \mR(\eta_t
  +K^d_t)$. Since $K^c$ is continuous, $(\mR_t(\eta)+K^d_t)_t$ is regular (see
  \cite[Exercise 27]{Pro05}). Then, from \cite[Property A3]{HO13}, we get that
  $\tau_t:=\inf\{s \ge t :  K^c_s-K^c_t>0\}$ is optimal after $t$. This yields
  $\int_t^{\tau_t} (\mR(\eta)_s+K^d_s-(\eta_s+K^d_s))dK^c_s=0$ for all $t\le
  T$. Then, we get $\int_0^T (\mR_{t^-}(\eta)-\eta_{t^-}) dK^c_t=0$.
\end{proof}

\section{Technical result for standard BSDEs with jumps}
\begin{lemma}\label{lem4} We assume that $\delta_n$ is small enough such that
   $(3+2p+2C_g+2C_g^2(1+\frac{1}{\lambda}e^{2\lambda T}))\delta_n<1.$ Then we have:
  \begin{align*} \sup_{j \leq n}
    \mathbb{E}[|\overline{y}_j^{p,n}|^2]+\delta_n\sum_{j=0}^{n-1}\mathbb{E}[|\overline{z}_j^{p,n}|^2]+(1-\kappa_n)\kappa_n\sum_{j=0}^{n-1}\mathbb{E}[|\overline{u}_j^{p,n}|^2]
    \leq K_{\mbox{Lem.}\ref{lem4}}.
\end{align*}
where $K_{\mbox{Lem.}\ref{lem4}}=(\|g(\cdot,0,0,0)\|^2_{\infty}+(p^2+C_gT)(\sup_n\max_j
\mathbb{E}[|\xi_j^n|^2]+\sup_n\max_j \mathbb{E}[|\zeta_j^n|^2]))e^{(3+2p+2C_g+2C_g^2(2+\frac{1}{\lambda}e^{2\lambda T}))}$.
\end{lemma}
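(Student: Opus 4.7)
The plan is to derive a one-step a priori estimate by squaring the scheme \eqref{discrete_explicite_2} written in forward form. Setting $\bar g_j := g(t_j,\E[\bar y_{j+1}^{p,n}|\mF^n_j],\bar z_j^{p,n},\bar u_j^{p,n})$, I observe that $\E[\bar y_{j+1}^{p,n}|\mF^n_j] = \bar y_j^{p,n} - \bar g_j\delta - \bar a_j^{p,n} + \bar k_j^{p,n}$ while $\bar y_{j+1}^{p,n} - \E[\bar y_{j+1}^{p,n}|\mF^n_j]$ is the martingale increment $\bar z_j^{p,n}\sqrt\delta\,e^n_{j+1} + \bar u_j^{p,n}\eta^n_{j+1} + \bar v_j^{p,n}\mu^n_{j+1}$. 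Using pairwise orthogonality of the three increments together with $\E[(e^n_{j+1})^2|\mF^n_j]=1$ and $\E[(\eta^n_{j+1})^2|\mF^n_j]=\E[(\mu^n_{j+1})^2|\mF^n_j]=\kappa_n(1-\kappa_n)$, and discarding the nonnegative term $|\bar g_j\delta+\bar a_j^{p,n}-\bar k_j^{p,n}|^2$ after expanding the square, leads to
\begin{equation*}
\E|\bar y_j^{p,n}|^2 + \delta\E|\bar z_j^{p,n}|^2 + \kappa_n(1-\kappa_n)\E|\bar u_j^{p,n}|^2 \le \E|\bar y_{j+1}^{p,n}|^2 + 2\delta\,\E[\bar y_j^{p,n}\bar g_j] + 2\,\E[\bar y_j^{p,n}(\bar a_j^{p,n}-\bar k_j^{p,n})].
\end{equation*}

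Next I would bound the two cross terms by Young's inequality. The reflection term is handled either using the identity $y(y-\xi)^- = -((y-\xi)^-)^2 + \xi(y-\xi)^-$, or with the naive bound $\bar a_j^{p,n}\le p\delta(|\bar y_j^{p,n}|+|\xi_j^n|)$ followed by Young, the latter producing the $p^2$ that appears in the constant in front of $\sup_n\max_j\E|\xi_j^n|^2+\sup_n\max_j\E|\zeta_j^n|^2$. For the driver term I use $|\bar g_j|\le\|g(\cdot,0,0,0)\|_\infty + C_g(|\E[\bar y_{j+1}^{p,n}|\mF^n_j]| + |\bar z_j^{p,n}|+|\bar u_j^{p,n}|)$ and decompose $2\delta\E[\bar y_j^{p,n}\bar g_j]$ into three Young products. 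Critically, the cross term involving $\bar u_j^{p,n}$ must be split with weight proportional to $\delta^2/(\kappa_n(1-\kappa_n))$ in order to absorb $\tfrac12\kappa_n(1-\kappa_n)\E|\bar u_j^{p,n}|^2$ into the left-hand side; the elementary estimate $\delta/(\kappa_n(1-\kappa_n)) \le (1/\lambda)e^{2\lambda T}$, already established inside the proof of Proposition \ref{prop5}, converts this weight into a bounded multiple of $2C_g^2\delta\cdot(1/\lambda)e^{2\lambda T}\,\E|\bar y_j^{p,n}|^2$, which is exactly the source of the $\frac{1}{\lambda}e^{2\lambda T}$ factor appearing in the constant of the lemma.

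Collecting these bounds produces a one-step recursion of the form
\begin{equation*}
(1 - C_\star\delta)\E|\bar y_j^{p,n}|^2 + \tfrac{\delta}{2}\E|\bar z_j^{p,n}|^2 + \tfrac{\kappa_n(1-\kappa_n)}{2}\E|\bar u_j^{p,n}|^2 \le (1+C_g\delta)\E|\bar y_{j+1}^{p,n}|^2 + \delta R_j,
\end{equation*}
with $C_\star := 3+2p+2C_g+2C_g^2(1+(1/\lambda)e^{2\lambda T})$ and $R_j := \|g(\cdot,0,0,0)\|_\infty^2 + p^2(\E|\xi_j^n|^2+\E|\zeta_j^n|^2)$. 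The hypothesis $C_\star\delta<1$ ensures the coefficient in front of $\E|\bar y_j^{p,n}|^2$ is strictly positive, so that, starting from the terminal value $\bar y_n^{p,n}=\xi_n^n$ and iterating backward, the discrete Gr\"onwall lemma (\cite[Lemma~3]{PX11}) yields the desired uniform bound on $\sup_{j\le n}\E|\bar y_j^{p,n}|^2$. Plugging this bound back into the one-step inequality and summing over $j=0,\ldots,n-1$ then delivers the estimates on $\delta\sum_j\E|\bar z_j^{p,n}|^2$ and $\kappa_n(1-\kappa_n)\sum_j\E|\bar u_j^{p,n}|^2$. The main delicate point is the simultaneous calibration of the Young weights on the $\bar z$- and $\bar u$-cross terms produced by the driver: both must be absorbed under the single smallness condition on $\delta$ given in the lemma, and the correct calibration forces the appearance of the exponent $3+2p+2C_g+2C_g^2(2+(1/\lambda)e^{2\lambda T})$ in the final constant.
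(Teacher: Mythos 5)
Your proposal is correct and follows essentially the same route as the paper's proof: the one-step identity from squaring the scheme and using orthogonality of the three martingale increments, the bound on the penalized driver via the Lipschitz property and the crude estimate $p(|\overline{y}_j^{p,n}|+|\xi_j^n|+|\zeta_j^n|)$, the Young weights calibrated so that the $\overline{u}$-cross term is absorbed using $\delta/(\kappa_n(1-\kappa_n))\le \frac{1}{\lambda}e^{2\lambda T}$, and finally the discrete Gr\"onwall lemma under the stated smallness condition on $\delta$. The only difference is presentational (you close the one-step recursion before summing, whereas the paper sums first and then applies Young), which does not change the argument.
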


\begin{proof}
From the explicit scheme, we derive that:
\begin{align*}
  \mathbb{E}[|\overline{y}_j^{p,n}|^2]-\mathbb{E}[|\overline{y}_{j+1}^{p,n}|^2]=&-\delta_n\mathbb{E}[|\overline{z}_j^{p,n}|^2]-(1-\kappa_n)\kappa_n\mathbb{E}[|\overline{u}_j^{p,n}|^2]-(1-\kappa_n)\kappa_n\mathbb{E}[|\overline{v}_j^{p,n}|^2]\\
  &-\delta_n^2\mathbb{E}[g_p^2(t_j, \mathbb{E}[\overline{y}_{j+1}^{p,n}|\mathcal{F}_j^n], \overline{z}_j^{p,n}, \overline{u}_j^{p,n})]+2\delta_n\mathbb{E}[\overline{y}_j^{p,n} g_p(t_j,
  \mathbb{E}[\overline{y}_{j+1}^{p,n}|\mathcal{F}_j^n], \overline{z}_j^{p,n},
  \overline{u}_j^{p,n})].
\end{align*}
Taking the sum for $j=i,...,n-1$ yields 
\begin{align*}
\mathbb{E}[|\overline{y}_i^{p,n}|^2] &\leq
\mathbb{E}[|\xi^n|^2]-\delta_n\sum_{j=i}^{n-1}\mathbb{E}[|\overline{z}_j^{p,n}|^2]-(1-\kappa_n)\kappa_n\sum_{j=i}^{n-1}\mathbb{E}[|\overline{u}_j^{p,n}|^2]+2\delta_n
\sum_{j=i}^{n-1}\mathbb{E}[\overline{y}_j^{p,n} g_p(t_j, \mathbb{E}[\overline{y}_{j+1}^{p,n}|\mathcal{F}_j^n], \overline{z}_j^{p,n}, \overline{u}_j^{p,n})]\\
&\leq
\mathbb{E}[|\xi^n|^2]-\delta_n\sum_{j=i}^{n-1}\mathbb{E}[|\overline{z}_j^{p,n}|^2]-(1-\kappa_n)\kappa_n\sum_{j=i}^{n-1}\mathbb{E}[|\overline{u}_j^{p,n}|^2]\\
&+2\delta_n\sum_{j=i}^{n-1}\mathbb{E}[|\overline{y}_j^{p,n}|(|g(t_j,0,0,0)|+C_g|\mathbb{E}[\overline{y}_{j+1}^{p,n}|\mathcal{F}_j^n]|+C_g|\overline{z}_j^{p,n}|+C_g|\overline{u}_j^{p,n}|+p(|\overline{y}_j^{p,n}|+|\xi^n_j|+|\zeta^n_j|))]
\end{align*}
Hence, we get that:
\begin{align*}
\mathbb{E}[|\overline{y}_i^{p,n}|^2]&
+\frac{\delta_n}{2}\sum_{j=i}^{n-1}\mathbb{E}[|\overline{z}_j^{p,n}|^2]+\frac{(1-\kappa_n)\kappa_n}{2}\sum_{j=i}^{n-1}\mathbb{E}[|\overline{u}_j^{p,n}|^2]
\leq\delta_n \sum_{j=i}^{n-1} \mathbb{E}[|g(t_j,0,0,0)|^2]\\
&+(p^2+C_g\delta_n)(\max_j
\mathbb{E}[|\xi_j^n|^2]+\max_j \mathbb{E}[|\zeta_j^n|^2])+\delta_n \left(3+2p+2C_g+2C_g^2+\frac{2C_g^2\delta_n}{(1-\kappa_n)\kappa_n}\right)\sum_{j=i}^{n-1}\mathbb{E}[|\overline{y}_j^{p,n}|^2].
\end{align*}
Since $\displaystyle{\frac{\delta_n}{\kappa_n(1-\kappa_n)} \le \frac{1}{\lambda} e^{2\lambda
  T}}$, the assumption on $\delta_n$ enables to apply Gronwall's Lemma, and the result follows. 
\end{proof}

\section{Some recent results on BSDEs and reflected BSDEs with jumps}
For the self-containment of the paper, we recall in this Section some recent
results used several times in the paper.

\subsection{Comparison Theorem for BSDEs  and reflected BSDEs with jumps}

\begin{theorem}[Comparison Theorem for BSDEs with jumps (\cite{QS13}, Theorem 4.2)]\label{QS13:th4.2}
  Let $\xi_1$ and $\xi_2$ be in $L^2(\mathcal{F}_T)$. Let $f_1$ be a Lipschitz
  driver and $f_2$ be a driver. For $i=1,2$ let $(X^i_t,\pi^i_t,l^i_t)$ be a
  solution in $\mathcal{S}^2\times \mathbb{H}^2 \times \mathbb{H}^2_{\nu}$ of
  the BSDE
  \begin{align}
    -dX^i_t=f_i(t,X^i_t,\pi^i_t,l^i_t) dt -\pi^i_t
    dW_t-\int_{\mathbb{R}^*}l^i_t(u)\tilde{N}(dt,du); \; X^i_T=\xi_i.
  \end{align}
  Assume that there exists a bounded predictable process $(\gamma_t)$ such
  that $dt \otimes dP\otimes \nu(du)$-a.s.
  \begin{align*}
    \gamma_t(u)\ge -1\quad \mbox{and  } |\gamma_t(u)|\le \psi(u),
  \end{align*}
  where $\psi \in L^2_{\nu}$ and such that
  \begin{align}\label{QS13:eq2}
    f_1(t,X^2_t,\pi^2_t,l^1_t)-f_1(t,X^2_t,\pi^2_t,l^2_t)\ge \langle
    \gamma_t,l^1_t-l^2_t \rangle_{\nu}, \; t\in [0,T], dt \otimes dP\; a.s.
  \end{align}
  Assume that
  \begin{align}\label{QS13:eq1}
    \xi_1 \ge \xi_2\; a.s. \mbox{  and  } f_1(t,X^2_t,\pi^2_t,l^2_t)\ge f_2(t,X^2_t,\pi^2_t,l^2_t) \; t\in [0,T], dt \otimes dP\; a.s.
  \end{align}
  Then we have
  \begin{align}\label{QS13:eq3}
    X^1_t \ge X^2_t \; a.s. \mbox{ for all } t\in [0,T].
  \end{align}
Moreover, if inequality \eqref{QS13:eq1} is satisfied for
$(X^1_t,\pi^1_t,l^1_t)$ instead of $(X^2_t,\pi^2_t,l^2_t)$ and if $f_2$
(instead of $f_1$) is Lipschitz and satisfies \eqref{QS13:eq2}, then
\eqref{QS13:eq3} still holds.
\end{theorem}

\begin{theorem}[Comparison Theorem for reflected BSDEs with jumps (\cite{QS14}, Theorem 5.1)]\label{QS14:th5.1}
Let $\xi^1, \xi^2$ be two RCLL obstacle processes in $\mathcal{S}^2$. Let $f_1$
and $f_2$ be Lipschitz drivers satisfying Assumption \ref{hypo1}. Suppose that
\begin{align*}
  &\xi^2_t\le \xi^1_t, \; 0\le t \le T \;a.s. \\
  &f_2(t,y,z,k)\le f_1(t,y,z,k),\mbox{ for all } (y,z,k)\in \mathbb{R}^2
    \times L^2_{\nu},\;dP\otimes dt\; a.s.
\end{align*}
Let $(Y^i,Z^i,k^i,A^i)$ be a solution in $\mathcal{S}^2\times \mathbb{H}^2
\times \mathbb{H}^2_{\nu} \times \mathcal{S}^2$ of
  the reflected BSDE
  \begin{align}
    &-dY^i_t=f_i(t,Y^i_t,Z^i_t,k^i_t(\cdot)) dt +dA^i_t-Z^i_t
      dW_t-\int_{\mathbb{R}^*}k^i_t(u)\tilde{N}(dt,du); \; Y^i_T=\xi^i_T,\\
    &Y^i_t \ge \xi^i_t,\;0\le t \le T\; a.s.\\    
  \end{align}
  and $A^i$ is a non decreasing RCLL predictable process with $A^i_0=0$ and
  such that
  \begin{align*}
    \int_0^T(Y^i_t-\xi^i_t)dA^{i,c}_t=0\;a.s. \mbox{ and } \Delta A^{i,d}_t=-\Delta
    Y^i_t\ind_{Y^i_{t^-}=\xi^i_{t^-}}\;a.s.
  \end{align*}
 Then $Y^2_t \le Y^1_t$ for all $t$ in
$[0,T]$ a.s. 
  \end{theorem}
  \subsection{Convergence results on reflected BSDEs with jumps}

  \begin{theorem}[Monotonic limit theorem for reflected BSDEs with jumps
    (\cite{Ess08}, Theorem 3.1)]\label{Ess08:th3.1}
    Assume that $f$ satisfies \cite[Assumption A.2]{Ess08}, $\xi \in L^2$ and
    $K^n$ is a continuous and increasing process such that $\sup_{n\in
      \mathbb{N}} \E(K^n_T)^2< \infty$ and $K^n_0=0$ for any $n\in
    \mathbb{N}$. Let $(Y^n,Z^n,V^n)$ be the solution of the following BSDE
    \begin{align*}
      Y^n_t=\xi+\int_t^Tf(s,Y^n_s,Z^n_s,V^n_s)ds+K^n_T-K^n_t-\int_t^T Z^n_s
      dW_s-\int_t^T \int_U V^n_s(u) \tilde{N}(ds,du),\;t\le T,
    \end{align*}
    where $\sup_{n\in \mathbb{N}} \E \int_0^T |Z^n_s|^2ds <\infty$ and
    $\sup_{n\in \mathbb{N}} \E \int_0^T \int_U |V^n_s(u)|^2\nu(du) ds
    <\infty$. If $Y^n$ converges increasingly to $Y$ with $\E(\sup_{0\le t \le
      T} Y_t^2)< \infty$, then there exists $Z \in \mathbb{H}^2$, $K \in
    \mathcal{A}^2$ and $V \in \mathbb{H}^2_{\nu}$ such that the triple
    $(Z,K,V)$ satisfies the following equation
    \begin{align*}
      Y_t=\xi+\int_0^T f(s,Y_s,Z_s,V_s)ds +K_T-K_t -\int_t^T Z_s dW_s
      -\int_t^T\int_U V_s(u)\tilde{N}(ds,du), \;t\le T.
    \end{align*}
    Here $Z$ is the weak limit of $(Z^n)_n$ in $\mathbb{H}^2$, $K_t$ is the weak
    limit of $(K^n_t)_n$ in $L^2(\mF_t)$ and $V$ is the weak limit of
    $(V^n)_n$ in $\mathbb{H}^2_{\nu}$. Moreover, for every $p \in [1,2[$, the
    following strong convergence holds
    \begin{align*}
      \lim_{n \rightarrow \infty} \E\left[\int_0^T |Y^n_s-Y_s|^2 ds\right] + \E\left[\int_0^T |Z^n_s-Z_s|^p
      ds + \int_0^T \left(\int_U |V^n_s(u)-V_s(u)|^2 \nu(du)\right)^{\frac{p}{2}}ds\right]=0.
    \end{align*}
  \end{theorem}

  Now we introduce the following penalized equation 
  \begin{align*}
      Y^n_t=\xi+\int_t^Tf(s,Y^n_s,Z^n_s,V^n_s)ds+K^n_T-K^n_t-\int_t^T Z^n_s
      dW_s-\int_t^T \int_U V^n_s(u) \tilde{N}(ds,du),\;t\le T,
  \end{align*}
  where $K^n_t=n\int_0^t (Y^n_s-S_s)^- ds$. We have
  \begin{theorem}[\cite{Ess08}, Theorem 4.2]\label{Ess08:th4.2}
    The sequence $(Y^n,Z^n,V^n)_n$ has a limit $(Y,Z,V)$ such that $Y^n$
    converges to $Y$ in $\mathcal{S}^2$ and $Z$ is the weak limit in
    $\mathbb{H}^2$, $K_t$ is the weak limit of $(K^n_t)_n$ in $L^2(\mF_t)$ and
    $V$ is the weak limit in $\mathbb{H}^2_{\nu}$.
  \end{theorem}
\subsection{Dynkin games and DBBSDEs}
In this section, we briefly recall the definition of a  Dynkin game,
as well as its connection with doubly reflected BSDEs, established for the
first time in \cite{CK96} in the case of a Brownian filtration and regular obstacles. This link has also been investigated  in the case of jumps and irregular obstacles (see  e.g. \cite{LX07}).

The setting of a Dynkin game is very simple. Two players observe two processes $\xi$ and $\zeta$. Player 1 chooses a stopping time $\sigma \in \mathcal{T}$, and Player 2 chooses a stopping time $\tau \in \mathcal{T}$. Player 2 pays  Player 1 the amount $I(\tau,\sigma):=\xi_{\tau \leq \sigma}+\zeta_{\sigma<\tau}$ at the stopping time $\tau \wedge \sigma$. Player 1 wishes to maximize $ \mathbb{E}[I( \tau, \sigma)]$ while Player 2 wishes to miminize it. It is then natural to define the lower and upper values of the game:
$$\overline{V}:= \inf_{\sigma \in \mathcal{T}}  \sup_{\tau \in \mathcal{T}} \mathbb{E}[I( \tau, \sigma)]; \,\, \underline{V} :=  \sup_{\tau \in \mathcal{T}} \inf_{\sigma \in \mathcal{T}}  \mathbb{E}[I( \tau, \sigma)]. $$
 The game is said to admit a value if $\overline{V}=\underline{V}$.

Let us now give the characterization of the solution of the DBBSDE as the
value function of a Dynkin game.\\
\begin{proposition}\label{stochasticgame}
Let $(Y,Z,U,\alpha) \in \mathcal{S}^2 \times \mathbb{H}^2 \times
\mathbb{H}^2_{\nu} \times \mathcal{A}^2$ be a solution of the DBBSDE \eqref{eq0}. 
For any $S \in \mathcal{T}_0$ and any stopping times $\tau, \sigma \in \mathcal{T}_S$, consider the payoff:
\begin{equation} \label{payoff}
 I_S(\tau, \sigma)= \int_S^{\tau \wedge \sigma} g(s, Y_s, Z_s, U_s(\cdot))ds+\xi_{\tau \leq \sigma}+\zeta_{\sigma<\tau}.
 \end{equation}
The upper and lower value functions at time $S$ associated to the  Dynkin game are defined respectively by 
\begin{equation} \label{overline}
\overline{V}(S) := \essinf_{\sigma \in \mathcal{T}_S }  \esssup_{\tau \in \mathcal{T}_S} \mathbb{E}[I_S( \tau, \sigma)| \cal{F}_{S}].
  \end{equation}

\begin{equation} \label{underline}
 \underline{V}(S):=\esssup_{\tau \in \mathcal{T}_S }  \essinf_{\sigma \in \mathcal{T}_S} \mathbb{E}[I_S(\tau, \sigma)| \cal{F}_{S}]
 \end{equation}
 This game has a value $V$, given by the state-process $Y$ solution of DBBSDE, i.e.
 \begin{equation}
 Y_S= \overline{V}(S)=\underline{V}(S).
 \end{equation}

\end{proposition}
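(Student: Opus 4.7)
The plan is to exhibit, for every $\vp>0$, an $\vp$-saddle point for the game and then let $\vp\to 0$. First, I would integrate the dynamics \eqref{eq0}(ii) between $S$ and $\tau\wedge\sigma$ for arbitrary $\tau,\sigma\in\mathcal{T}_S$ and take conditional expectation with respect to $\mathcal{F}_S$; the stochastic integrals against $W$ and $\tilde N$ are true $\mathbb{F}$-martingales thanks to $Z\in\mathbb{H}^2$ and $U\in\mathbb{H}^2_\nu$, yielding the key identity
\begin{equation*}
Y_S \;=\; \mathbb{E}\Big[\,Y_{\tau\wedge\sigma} + \int_S^{\tau\wedge\sigma} g(s,Y_s,Z_s,U_s)\,ds + (A_{\tau\wedge\sigma}-A_S) - (K_{\tau\wedge\sigma}-K_S)\,\Big|\,\mathcal{F}_S\Big].
\end{equation*}

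Next, I would introduce the candidate $\vp$-optimal times
\begin{equation*}
\tau^\vp := \inf\{t\geq S:\ Y_t \leq \xi_t+\vp\}\wedge T, \qquad \sigma^\vp := \inf\{t\geq S:\ Y_t \geq \zeta_t-\vp\}\wedge T,
\end{equation*}
and establish two central claims: $A_{\tau^\vp}=A_S$ and $K_{\sigma^\vp}=K_S$. For the first, on $[S,\tau^\vp)$ one has $Y>\xi+\vp$, so $A^c$ does not increase there; passing to left-limits yields $Y_{t^-}\geq\xi_{t^-}+\vp>\xi_{t^-}$ for every $t\in(S,\tau^\vp]$, which by the predictable-jump clause of Remark \ref{rem14} rules out any jump of $A^d$ on $[S,\tau^\vp]$. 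Since $A\in\mA^2$ is predictable, it cannot jump at totally inaccessible times either, so $A$ is constant on $[S,\tau^\vp]$. Moreover, RCLL regularity makes $\{Y\leq\xi+\vp\}$ right-closed, so $Y_{\tau^\vp}\leq\xi_{\tau^\vp}+\vp$; a symmetric argument handles $\sigma^\vp$ and yields $Y_{\sigma^\vp}\geq\zeta_{\sigma^\vp}-\vp$.

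Plugging $\tau=\tau^\vp$ into the key identity, using $A_{\tau^\vp\wedge\sigma}-A_S=0$, $-(K_{\tau^\vp\wedge\sigma}-K_S)\leq 0$, and the bound $Y_{\tau^\vp\wedge\sigma}\leq(\xi_{\tau^\vp}+\vp)\ind_{\tau^\vp\leq\sigma}+\zeta_\sigma\ind_{\sigma<\tau^\vp}$ (the first piece from the definition of $\tau^\vp$, the second from the barrier inequality $Y\leq\zeta$), I would obtain $Y_S\leq\mathbb{E}[I_S(\tau^\vp,\sigma)\mid\mathcal{F}_S]+\vp$ for every $\sigma\in\mathcal{T}_S$. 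Taking $\essinf_\sigma$, then $\esssup_\tau$, and letting $\vp\to 0$ gives $Y_S\leq \underline{V}(S)$. A perfectly symmetric computation with $\sigma=\sigma^\vp$, based on $K_{\tau\wedge\sigma^\vp}=K_S$ and $(A_{\tau\wedge\sigma^\vp}-A_S)\geq 0$, produces $\overline{V}(S)\leq Y_S$. Combined with the general inequality $\underline{V}(S)\leq\overline{V}(S)$, all three quantities coincide with $Y_S$.

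The main obstacle is the claim $A_{\tau^\vp}=A_S$ in the presence of predictable jumps of $A$. Working with a strict cushion $\vp>0$ rather than $\vp=0$ is essential: at $\vp=0$ it would be possible for $\tau^0:=\inf\{t\geq S:Y_t=\xi_t\}$ to be a predictable stopping time at which $Y_{(\tau^0)^-}=\xi_{(\tau^0)^-}$, and the Skorokhod condition would then \emph{permit} a positive predictable jump $\Delta A^d_{\tau^0}$, contaminating the one-sided inequality. The $\vp$-cushion forces $Y_{t^-}\geq\xi_{t^-}+\vp$ on the whole closed interval $[S,\tau^\vp]$, thereby forbidding any predictable jump of $A$, at the cost of an error $O(\vp)$ that vanishes in the limit.
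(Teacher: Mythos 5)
Your proof is correct and follows essentially the same route as the paper: the same $\vp$-stopping times $\tau^\vp_S,\sigma^\vp_S$, the same argument that $A$ (resp. $K$) is constant on $[S,\tau^\vp_S]$ (resp. $[S,\sigma^\vp_S]$) via the Skorokhod conditions of Remark \ref{rem14} together with $Y_{t^-}\ge \xi_{t^-}+\vp$, the same one-sided estimates $\overline{V}(S)-\vp\le Y_S\le \underline{V}(S)+\vp$, and the conclusion from $\underline{V}(S)\le\overline{V}(S)$. The only cosmetic difference is that you package the forward dynamics into a single conditional-expectation identity before inserting the $\vp$-optimal times, whereas the paper estimates $I_S(\tau,\sigma^\vp_S)$ pathwise on the two events $\{\sigma^\vp_S<\tau\}$ and $\{\tau\le\sigma^\vp_S\}$ before conditioning; the content is identical.
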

Note that in the definition $\eqref{payoff}$, $(g(s, Y_s, Z_s, U_s(\cdot))_{s \leq \tau \wedge \sigma}$ represents the instantaneous reward, while $\xi_{\tau \leq \sigma}+\zeta_{\sigma<\tau}$ the terminal one.

\begin{proof}
For each $S$ $\in$ $\mathcal{T}_0$ and for each $\varepsilon >0$, let

\begin{equation}\label{epsi}
\tau^{\varepsilon}_S:= \inf \{ t \geq S,\,\,Y_t \leq \xi_t  + \varepsilon\} \quad \sigma^{\varepsilon}_S:= \inf \{ t \geq S,\,\,Y_t \geq \zeta_t  - \varepsilon\}.
\end{equation}
Remark that $\sigma^{\varepsilon}_S$  and  $\tau^{\varepsilon}_S$ $\in$ $\mathcal{T}_S$. Fix $\varepsilon >0$. We have that almost surely, if $t \in [S,
\tau^{\varepsilon}_S[$, then $ Y_t> \xi_t + \varepsilon$ and hence $Y_t> \xi_t$. It follows that the function $t \mapsto A^c_t$ is constant a.s. on $[S,
\tau^{\varepsilon}_S]$ and $t \mapsto A^d_t $ is constant a.s. on $[S, \tau^{\varepsilon}_S[$.
Also, $ Y_{ (\tau^{\varepsilon}_S)^-  } \geq \xi_{ (\tau^{\varepsilon}_S)^-  }+ \varepsilon\,$ a.s.\,
Since $\varepsilon >0$, it follows that $ Y_{ (\tau^{\varepsilon}_S)^-  } >
\xi_{ (\tau^{\varepsilon}_S)^-  }$  a.s.\,\,, which implies that $\Delta A^d _
{\tau^{\varepsilon}_S  } =0$ a.s. (see Remark \ref{rem14}). Hence, the process $A$ is constant on $[S, \tau^{\varepsilon}_S]$.
Furthermore, by the right-continuity of $(\xi_t)$ and $(Y_t)$, we clearly have
$Y_{\tau^{\varepsilon}_S} \leq \xi_{\tau^{\varepsilon}_S} + \varepsilon \quad \mbox{a.s.}$
Similarly, one can show that the process $K$ is constant on $[S, \sigma^{\varepsilon}_S]$ and that
$Y_{\sigma^{\varepsilon}_S} \geq \zeta_{\sigma^{\varepsilon}_S} - \varepsilon \quad \mbox{a.s.}$\\
Let us now consider two cases.
First, on the set $\{ \sigma^{\varepsilon}_S < \tau \}$, by using the definition of the stopping times and the fact that  $K$ is constant on $[S, \sigma^{\varepsilon}_S]$, we have:
\begin{equation}
 I_S(\tau, \sigma^{\varepsilon}_S) \leq \int_S^{\sigma^{\varepsilon}_S} g(s, Y_s, Z_s, U_s(\cdot))ds + Y_{\sigma^{\varepsilon}_S}+ \varepsilon- (K_{\sigma^{\varepsilon}_S}-K_S)+(A_{\sigma^{\varepsilon}_S}-A_S)
 \end{equation}
 \begin{equation*}
\leq Y_S+ \int_S^{\sigma^{\varepsilon}_S}Z_s dW_s + \int_S^{\sigma^{\varepsilon}_S} \int_{\mathbb{R}^*}U_s(e)\Tilde{N}(ds,de)+ \varepsilon.
\end{equation*}

On the set $\{ \tau \leq \sigma^{\varepsilon}_S  \}$, we obtain:
\begin{equation*}
 I_S(\tau, \sigma^{\varepsilon}_S) \leq \int_S^{\tau} g(s, Y_s, Z_s, U_s(\cdot))ds + Y_{\tau}- (K_{\tau}-K_S)+(A_{\tau}-A_S)
 \end{equation*}
 \begin{equation*}
\leq Y_S+ \int_S^{\tau}Z_s dW_s + \int_S^{\tau} \int_{\mathbb{R}^*}U_s(e)\Tilde{N}(ds,de).
\end{equation*}
The two above inequalities imply:
\begin{equation*}
\mathbb{E}[I_S(\tau, \sigma^{\varepsilon}_S)| \mathcal{F}_{S}] \leq Y_S+ \varepsilon.
\end{equation*}
Similarly, one can show that:
\begin{equation*}
\mathbb{E}[I_S(\tau^{\varepsilon}_S, \sigma)| \mathcal{F}_{S}] \geq Y_S- \varepsilon.
\end{equation*}
Consequently, we get that for each $\varepsilon >0$
\begin{equation*} 
 \esssup_{\tau \in \mathcal{T}_s}E[I_S(\tau,\sigma_S^{\varepsilon}) | \mathcal{F}_{S}]-\varepsilon \,\, \leq \,\, Y_S \,\, \leq \,\, \essinf_{\sigma \in \mathcal{T}_S} E[I_S(\tau_S^{\varepsilon},\sigma) | \mathcal{F}_{S}]+\varepsilon \,\, \text{ a.s.},
 \end{equation*}
that is $\overline{V}(S)-\varepsilon \,\, \leq \,\, Y_S \,\,\leq \,\, \underline{V}(S)+\varepsilon \quad \text{ a.s.}$ Since 
$\underline{V}(S) \leq \overline{V}(S)$  a.s., the result follows.

\end{proof}


\bibliographystyle{abbrv}
\bibliography{ref}

\begin{thebibliography}{10}

\bibitem{BE08}
B.~Bouchard and R.~Elie.
\newblock Discrete-time approximation of decoupled {F}orward-{B}ackward {SDE}
  with jumps.
\newblock {\em Stochastic Processes and their Applications}, (118):53--75,
  2008.

\bibitem{BDM01}
P.~Briand, B.~Delyon, and J.~M{\'e}min.
\newblock Donsker-{T}ype {T}heorem for {BSDE}s.
\newblock {\em Electron. Comm. Probab.}, (6):1--14, 2001.

\bibitem{C09}
J.-F. Chassagneux.
\newblock A discrete-time approximation for doubly reflected {BSDE}s.
\newblock {\em Adv. in Appl. Probab.}, 41(1):101--130, 2009.

\bibitem{CM08}
S.~Cr{\'e}pey and A.~Matoussi.
\newblock Reflected and doubly reflected {BSDE}s with jumps: a priori estimates
  and comparison.
\newblock {\em Ann. Appl. Probab.}, 18(5):2041--2069, 2008.

\bibitem{CK96}
J.~Cvitanic and I.~Karatzas.
\newblock Backward stochastic differential equations with reflection and dynkin
  games.
\newblock {\em The Annals of Probability}, (41):2024--2056, 1996.

\bibitem{DQS14}
R.~Dumitrescu, M.~Quenez, and A.~Sulem.
\newblock Double barrier reflected {BSDE}s with jumps and generalized dynkin
  games, \url{http://hal.upmc.fr/hal-00873688}.
\newblock 2014.

\bibitem{EKPPQ97}
N.~El~Karoui, C.~Kapoudjian, E.~Pardoux, S.~Peng, and M.~Quenez.
\newblock Reflected solutions of {B}ackward {SDE}'s and related obstacle
  problems for {PDE}'s.
\newblock {\em The Annals of Probability}, 25(2):702--737, 1997.

\bibitem{Ess08}
E.~Essaky.
\newblock Reflected backward stochastic differential equation with jumps and
  {RCLL} obstacle.
\newblock {\em Bulletin des Sciences Mathématiques}, (132):690--710, 2008.

\bibitem{EHO05}
E.~Essaky, N.~Harraj, and Y.~Ouknine.
\newblock Backward stochastic differential equation with two reflecting
  barriers and jumps.
\newblock {\em Stochastic Analysis and Applications}, 23:921--938, 2005.

\bibitem{HH06}
S.~Hamad\`ene and M.~Hassani.
\newblock {BSDE}s with two reacting barriers driven by a {B}rownian motion and
  an independent {P}oisson noise and related {D}ynkin game.
\newblock {\em Electronic Journal of Probability}, 11:121--145, 2006.

\bibitem{HO03}
S.~Hamad\`ene and Y.~Ouknine.
\newblock Reflected backward stochastic differential equation with jumps and
  random obstable.
\newblock {\em Elec. Journ. of Prob.}, 8:1--20, 2003.

\bibitem{HO13}
S.~Hamadène and Y.~Ouknine.
\newblock {Reflected Backward SDEs with general jumps,
  \url{http://arxiv.org/abs/0812.3965}}.
\newblock 2013.

\bibitem{HW09}
S.~Hamadène and H.~Wang.
\newblock {BSDE}s with two {RCLL} {R}eflecting {O}bstacles driven by a
  {B}rownian {M}otion and {P}oisson {M}easure and related {M}ixed {Z}ero-{S}um
  {G}ames.
\newblock {\em Stochastic Processes and their Applications}, 119:2881--2912,
  2009.

\bibitem{JMP89}
A.~Jakubowski, J.~M{\'e}min, and G.~Pag{\`e}s.
\newblock Convergence en loi des suites d'int{\'e}grales stochastiques sur
  l'espace $\mathbb{D}^1$ de skorokhod.
\newblock {\em Prob. Th. and Rel. Fields}, 81:111--137, 1989.

\bibitem{Kif13}
Y.~Kifer.
\newblock Dynkin games and israeli options.
\newblock {\em ISRN Probability and Statistics}, 2013.

\bibitem{LMT07}
A.~Lejay, E.~Mordecki, and S.~Torres.
\newblock {Numerical approximation of Backward Stochastic Differential
  Equations with Jumps, \url{https://hal.archives-ouvertes.fr/inria-00357992}}.
\newblock 2014.

\bibitem{LX07}
J.~Lepeltier and M.~Xu.
\newblock Reflected backward stochastic differential equations with two {RCLL}
  barriers.
\newblock {\em ESAIM: Probability and Statistics}, (11):3--22, 2007.

\bibitem{MPX02}
J.~Mémin, S.~Peng, and M.~Xu.
\newblock Convergence of solutions of discrete {R}eflected backward {SDE}'s and
  {S}imulations.
\newblock {\em Acta Mathematica Sinica}, 24(1):1--18, 2002.

\bibitem{PP90}
E.~Pardoux and S.~Peng.
\newblock Adapted solution of a backward stochastic differential equation.
\newblock {\em Systems Control Lett.}, 14(1):55--61, 1990.

\bibitem{PX05}
S.~Peng and M.~Xu.
\newblock The smallest g-supermartingale and reflected {BSDE} with single and
  double {L}$^2$ obstacles.
\newblock {\em Annales de l'Institut Henri Poincaré}, (41):605--630, 2005.

\bibitem{PX11}
S.~Peng and M.~Xu.
\newblock Numerical algorithms for bsdes with 1-d {B}rownian motion:
  convergence and simulation.
\newblock {\em ESAIM: Mathematical Modelling and Numerical Analysis},
  (45):335--360, 2011.

\bibitem{Pro05}
P.~Protter.
\newblock {\em Stochastic integration and differential equations, A new
  approach, Second Edition}, volume~21 of {\em Appl. Math.}
\newblock Springer-Verlag, Berlin Heidelberg New York, 2005.

\bibitem{QS13}
M.~Quenez and A.~Sulem.
\newblock {BSDE}s with jumps, optimization and applications to dynamic risk
  measures.
\newblock {\em Stochastic Processes and their Applications}, (123):3328--3357,
  2013.

\bibitem{QS14}
M.~Quenez and A.~Sulem.
\newblock Reflected {BSDE}s and robust optimal stopping for dynamic risk
  measures with jumps.
\newblock 2014.

\bibitem{Roy06}
M.~Royer.
\newblock Backward stochastic differential equations with jumps and related
  non-linear expectations.
\newblock {\em Stochastic Process. Appl.}, 116(10):1358--1376, 2006.

\bibitem{SLO89}
L.~S{\l}omi{\'n}ski.
\newblock Stability of strong solutions of stochastic differential equations.
\newblock {\em Stochastic Process. Appl.}, 31(2):173--202, 1989.

\bibitem{TL94}
S.~Tang and X.~Li.
\newblock Necessary conditions for optimal control of stochastic systems with
  random jumps.
\newblock {\em SIAM J. Cont. and Optim.}, 32:1447--1475, 1994.

\bibitem{Xu11}
M.~Xu.
\newblock Numerical algorithms and {S}imulations for {R}eflected {B}ackward
  {S}tochastic {D}ifferential {E}quations with {T}wo {C}ontinuous {B}arriers.
\newblock {\em Journal of Computational and Applied Mathematics},
  236:1137--1154, 2011.

\end{thebibliography}

\end{document}